\numberwithin{equation}{section}
\theoremstyle{definition}
\newtheorem{thm}{Theorem}[section]
\newtheorem{theorem}[thm]{Theorem}
\newtheorem{lemma}[thm]{Lemma}
\newtheorem{corollary}[thm]{Corollary}
\newtheorem{proposition}[thm]{Proposition}
\newtheorem{remark}[thm]{Remark}
\newtheorem{definition}[thm]{Definition}
\newtheorem{example}[thm]{Example}
\newtheorem{defn-thm}[thm]{Definition-Theorem}
\newtheorem{Notation}[thm]{Notation}
\newenvironment{observe}{\noindent\textcolor{blue}{\textit{Observation}}.}{\hfill \textcolor{blue}{$\blacktriangleleft$}\par}
\newtheorem*{theorem*}{Theorem}
\newtheorem*{proposition*}{Proposition}
\definecolor{pistachio}{rgb}{0.58, 0.77, 0.45}
\definecolor{eggshell}{rgb}{0.94, 0.92, 0.84}
\newcommand{\sE}{{\mathcal E}}
\newcommand{\sK}{{\mathcal K}}
\newcommand{\sL}{{\mathcal L}}
\newcommand{\sN}{{\mathcal N}}
\newcommand{\sO}{{\mathcal O}}
\newcommand{\sP}{{\mathcal P}}
\newcommand{\sR}{{\mathcal R}}
\newcommand{\cO}{{\mathcal O}}
\newcommand{\cP}{{\mathcal P}}
\newcommand{\g}{{\mathfrak g}}
\newcommand{\gc}{\mathfrak{c}}
\newcommand{\gp}{\mathfrak{p}}
\newcommand{\gn}{\mathfrak{n}}
\newcommand{\gl}{\mathfrak{l}}
\newcommand{\CC}{{\mathbb C}}
\newcommand{\N}{{\mathbb N}}
\newcommand{\Sp}{\operatorname{Sp}}
\newcommand{\SO}{\operatorname{SO}}
\newcommand{\OG}{\operatorname{OG}}
\newcommand{\Prym}{\operatorname{\mathbf{Prym}}}
\newcommand{\Ker}{\operatorname{Ker}}
\newcommand{\Pic}{\operatorname{Pic}}
\newcommand{\Spec}{\operatorname{Spec}}
\newcommand{\Res}{\operatorname{Res}}
\newcommand{\Jac}{{\operatorname{Jac}}}
\newcommand{\codim}{{\operatorname{codim}}}
\newcommand{\Tot}{{\operatorname{Tot}}}
\newcommand{\Frac}{{\operatorname{Frac}}}
\newcommand{\Fil}{\operatorname{Fil}}
\newcommand{\KL}{\operatorname{KL}}
\newcommand{\Ad}{\operatorname{Ad}}
\newcommand{\btheorem}{\begin{theorem}}
\newcommand{\etheorem}{\end{theorem}}
\newcommand{\bproposition}{\begin{proposition}}
\newcommand{\eproposition}{\end{proposition}}
\newcommand{\bdefinition}{\begin{definition}}
\newcommand{\edefinition}{\end{definition}}
\newcommand{\bcorollary}{\begin{corollary}}
\newcommand{\ecorollary}{\end{corollary}}
\newcommand{\bproof}{\begin{proof}}
\newcommand{\eproof}{\end{proof}}
\newcommand{\bremark}{\begin{remark}}
\newcommand{\eremark}{\end{remark}}
\newcommand{\eexample}{\end{example}}
\newcommand{\bexample}{\begin{example}}
\newcommand{\elemma}{\end{lemma}}
\newcommand{\blemma}{\begin{lemma}}
\newcommand{\bobserve}{\begin{observe}}
\newcommand{\eobserve}{\end{observe}}
\renewcommand{\bar}{\overline}
\renewcommand{\phi}{\varphi}
\newcommand{\ee}{\end{eqnarray*}}
\newcommand{\be}{\begin{eqnarray*}}
\newcommand{\Obar}{\overline{\bf{O}}}
\newcommand{\beq}{\begin{equation}}
\newcommand{\eeq}{\end{equation}}
\newcommand{\bd}{\begin{enumerate}}
\newcommand{\ed}{\end{enumerate}}
\newcommand{\bti}{\begin{tikzcd}}
\newcommand{\eti}{\end{tikzcd}}
\renewcommand{\tilde}{\widetilde}
\renewcommand{\bf}[1]{\mathbf{#1}}
\title[On the generic fibers and true base of parabolic $\SO_{2n}$-Hitchin systems]{On the generic fibers and true base of parabolic $\SO_{2n}$-Hitchin systems}
\author{Bin Wang}
\address{Department of Mathematics, Chinese University of Hong Kong, New Territories, Hong Kong SAR.}
\email{binwang@math.cuhk.edu.hk}
\author{Xueqing Wen}
\address{Chongqing University of Technology, No. 69, Hongguang Avenue, Banan District, Chongqing, 400054, China.}
\email{wenxq@cqut.edu.cn}
\author{Yaoxiong Wen}
\address{School of Mathematics, Korea Institute for Advanced Study, Seoul 02455, Korea.}
\email{y.x.wen.math@gmail.com}
\date{}
\begin{document}

\maketitle
\begin{abstract}
    In this paper, we confirm a physical conjecture regarding the parabolic $\SO_{2n}$-Hitchin system, showing that Hitchin map factors through a finite cover of the Hitchin base that is isomorphic to an affine space. We first show that the generic Hitchin fiber is disconnected, with the number of components determined by the degree of the generalized Springer map, and then construct the cover explicitly. To this end, we introduce and study a new class of moduli spaces, termed \emph{residually nilpotent Hitchin systems}, and analyze their generic Hitchin fibers. Furthermore, we uncover an interesting connection between self-duality of the generic Hitchin fiber and special nilpotent orbits.
\end{abstract}

\tableofcontents
\clearpage

\section{Introduction}

\subsection{Background}\label{subsec:backgroup} 

Moduli of Higgs bundles on curves along with the corresponding Hitchin maps form an algebraically completely integrable systems  and has played an important role in algebraic geometry and geometric representation theory. By Simpson~\cite{Sim90, Sim92}, \emph{parabolic} Hitchin systems serves as an analogue over punctured curves, or curves with marked points. Motivated by  Gukov, Kapustin and Witten~\cite{KW07, GW08, GW10}, we study a variant of the parabolic Hitchin system, namely the \emph{residually nilpotent} Hitchin system, which associates a nilpotent orbit closure to a Hitchin system. In \cite{WWW24}, we constructed and studied such moduli spaces for types B and C, developing local techniques to describe generic Hitchin fibers. In this paper, we turn our attention to type D, specifically the case $G = \SO_{2n}$, thereby completing the picture for all classical groups.\footnote{The type A case has been studied in \cite{She18, SWW22, She23, SWW22}.}

Another motivation for our work comes from Yun's global Springer theory \cite{Yun11}. Nilpotent orbits are \emph{classical} objects in geometric representation theory. The \emph{local} aspect-studying liftings of nilpotent elements in the loop Lie algebra-dates back to the foundational work of Kazhdan and Lusztig \cite{KL88}; see also \cite{Yun21}. However, the \emph{global} analogue, akin to Yun’s theory, which associates nilpotent orbits to a Hitchin system, has only been implicitly explored in the literature. We believe that the residually nilpotent Hitchin systems can fill this jigsaw. As we will see, the geometry of these systems is deeply intertwined with the representation-theoretic nature of nilpotent orbits, and offers new insights into the classical theory. 

The central theme of this paper is to investigate the geometry of generic Hitchin fibers and Hitchin bases. The study of generic fibers is strongly motivated by the mirror symmetry phenomenon observed in Hitchin systems, as explored in a series of influential works \cite[\textit{etc}]{HT03, DG02, DP12, Yun11, Yun12, CZ17, GWZ20, GWZ20g, MS21}. On the other hand, the geometry of the Hitchin base in type~D presents additional interest due to the presence of the Pfaffian, which leads to richer structure compared to other types. This aspect is further inspired by Tachikawa's conjectural description \cite{Tac} of the ``true'' Hitchin base via the Coulomb branch of certain 4D $\mathcal{N}=2$ supersymmetric quantum field theories.

We now define the \emph{residually nilpotent Hitchin systems}. Let $G$ be a complex reductive group with Lie algebra $\mathfrak{g}$. A \emph{nilpotent orbit} is an adjoint orbit $\mathbf{O}_e := G \cdot e$ for some nilpotent element $e \in \mathfrak{g}$. We incorporate nilpotent orbits into the framework of Hitchin systems over a pointed smooth algebraic curve $(\Sigma, x)$, where $x \in \Sigma$ is a fixed point:
\[
    h_{\Obar}: \mathbf{Higgs}_{\Obar} \longrightarrow \mathbf{H}_{\Obar}.
\]
The moduli space $\mathbf{Higgs}_{\Obar}$ is constructed via the Jacobson–Morozov resolution $G \times_{P_{JM}} \mathfrak{n}_2 \to \Obar$, where $P_{JM}$ and $\mathfrak{n}_2$ are defined from a standard $\mathfrak{sl}_2$-triple. See Section~\ref{sec:parHit_Nil} for further details. In type D case, the Hitchin base $\mathbf{H}_{\Obar}$ is defined as the space of coefficients of the characteristic polynomial of the Higgs fields (with the last term given by Pfaffian).

To illustrate how the geometry of residually nilpotent Hitchin systems connects to nilpotent orbits, we briefly recall some results from \cite{WWW24}.

By a result of Springer~\cite{Spr}, there exists an injection from the set of irreducible representations of the Weyl group into pairs $(e, \rho)$, where $e \in \mathfrak{g}$ is nilpotent and $\rho$ is an irreducible representation of the component group $A(\mathbf{O}_e) := G_e / G_e^\circ$. Equivalently, $\rho$ can be viewed as an equivariant local system on the orbit $\mathbf{O}_e$.

A nilpotent orbit $\mathbf{O}_e$ is said to be \emph{special} if the pair $(e, 1)$ corresponds to a special representation in the sense of Lusztig~\cite{Lus79}. Note that the generic nilpotent orbit associated to a parabolic subgroup---the Richardson orbit---is always special.

Since Langlands dual groups share the same Weyl group, there is a natural one-to-one correspondence between special nilpotent orbits in types B and C. We refer to this as the \emph{Springer duality}.

In \cite{WWW24}, we uncover deep connections among Springer theory, mirror symmetry, and Lusztig’s canonical quotient, via residually nilpotent Hitchin systems for $\Sp_{2n}/\SO_{2n+1}$. Geometrically, this manifests in dualities between generic Hitchin fibers and their images on both sides. As a result, we prove both Strominger--Yau--Zaslow and topological mirror symmetries for Langlands dual parabolic Hitchin systems of types B and C.

\subsection{Main results}

In this paper, we focus on type D. Unless otherwise specified, we take $G=\SO_{2n}$. Although $\SO_{2n}$ is Langlands self-dual, it exhibits unique behavior not present in other types. And our main results deal with generic Hitchin fibers and also Hitchin images. In particular, we show very nice geometric properties of residually nilpotent Hitchin systems associated with special nilpotent orbits. From the point view of Hitchin systems, they are also quite ``special".

We denote the residually nilpotent Hitchin system associate to $\Obar$ by $\mathbf{Higgs}_{\Obar}$. It has two connected componnets, and we use $\mathbf{Higgs}_{\Obar}^{\pm}$ to denote each one component.

Now we consider the corresponding Hitchin map:
\begin{equation} \label{intro.h_Obar}
    h_{\Obar}^{\pm}: \mathbf{Higgs}_{\Obar}^{\pm} \longrightarrow \mathbf{H}_{\Obar}.
\end{equation}
Here $\bf{H}_{\Obar}$ is the collection of coefficients of characteristic polynomials. As noticed in literature \cite{BK18, CDT, BDDP23}, $\bf{H}_{\Obar}$ can be a singular space even when $\bf{O}$ is Richardson. 

Tachikawa~\cite{Tac} proposes a physical conjecture that the Coulomb branch of certain 4D $\mathcal{N}=2$ supersymmetric quantum field theories—an affine space—can be realized as a finite cover of the Hitchin base in type D. Moreover, it is expected that the Hitchin map factors through this Coulomb branch.

Motivated by this insight, we define the \emph{Coulomb Hitchin base} as the affinization of $\mathbf{Higgs}_{\Obar}^{\pm}$:
\[
    \mathbf{A}_{\Obar} := \operatorname{Spec}\left( \mathbb{C} \left[ \mathbf{Higgs}_{\Obar}^{\pm} \right] \right).
\]

\begin{theorem}[Theorem~\ref{thm.O fiber} and Proposition~\ref{d o bar}]
     The residually nilpotent Hitchin map admits the following factorization:
    \begin{align*}
        \xymatrix{
            \mathbf{Higgs}^\pm_{\Obar} \ar[dr]_{h_{\Obar}^\pm} \ar[rr]^{h_{\Obar,\mathrm{aff}}^\pm} & & \mathbf{A}_{\Obar} \ar[dl]^{f_{\Obar}} \\
            & \mathbf{H}_{\Obar} &
        }
    \end{align*}
    such that 
    \begin{itemize}
        \item [(1)] $\bf{A}_{\Obar}$ is the normalization of $\bf{H}_{\Obar}$, hence $f_{\Obar}$ is generically one-to-one;
        \item [(2)] $\bf{A}_{\Obar}$ is isomorphic to product of an affine space with several affine Veronese varieties of degree $2$;
        \item [(3)] For generic $a$ in $\mathbf{A}_{\Obar}$ (or equivalently generic in $\mathbf{H}_{\Obar}$), the Hitchin fiber $(h_{\Obar, \mathrm{aff}}^\pm)^{-1}(a)$ is a torsor of an abelian variety $\Prym_{\Obar,a}$. 
    \end{itemize}
\end{theorem}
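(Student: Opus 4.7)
The factorization is formal: since $\mathbf{H}_{\Obar}$ is affine, the universal property of the affinization $\mathbf{A}_{\Obar} = \Spec \Gamma(\mathbf{Higgs}^\pm_{\Obar}, \mathcal{O})$ guarantees a unique factorization $h_{\Obar}^\pm = f_{\Obar} \circ h_{\Obar,\mathrm{aff}}^\pm$. The substance of part (1) is normality of $\mathbf{A}_{\Obar}$ together with birationality of $f_{\Obar}$. For normality, I would argue that $\mathbf{Higgs}^\pm_{\Obar}$ is smooth in codimension one on a dense open subset dictated by the regular stratum of $\Obar$; since global sections of a variety smooth in codimension one with $S_2$ structure form an integrally closed ring, $\mathbf{A}_{\Obar}$ is normal. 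For birationality, I would restrict to the locus where the generic spectral datum reconstructs the Higgs field up to conjugation (standard BNR), and observe that over this locus the extra functions on $\mathbf{A}_{\Obar}$ beyond $\mathbf{H}_{\Obar}$ already arise as rational functions in the characteristic coefficients, so $\mathbf{H}_{\Obar} \hookrightarrow \mathbf{A}_{\Obar}$ is a birational finite extension. Normality then forces $\mathbf{A}_{\Obar}$ to equal the normalization.

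Part (2) is the most delicate computation. In type D the coefficients $p_2, p_4, \ldots, p_{2n-2}$ of the characteristic polynomial together with the Pfaffian $q_n$ (satisfying $q_n^2 = p_{2n}$) generate $\mathbf{H}_{\Obar}$, but the residually nilpotent condition at $x$ prescribes higher-order vanishing of these functions at $x$ whose orders are governed by the Jordan type of $\Obar$. My plan is to read off the orders of vanishing from the partition $\lambda$ of $\Obar$ (using the parity data that distinguishes very even versus non-very-even orbits), identify each ``extra'' generator of $\Gamma(\mathbf{Higgs}^\pm_{\Obar}, \mathcal{O})$ as a local Pfaffian-type square root of a block of $p_{2i}$, and catalogue the relations among them. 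Each such relation is of the shape $y_i^2 = z_i$ or more generally $y_i y_j = z_{ij}$ with $y_i y_j = y_k y_l$ when $i+j=k+l$, which is precisely the defining relation of the affine cone over a rational normal curve of degree $2$. Combining the free coordinates corresponding to unobstructed $p_{2i}$'s with these cones yields the asserted product decomposition.

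Part (3) proceeds by spectral-curve methods adapted from \cite{WWW24}. For generic $a \in \mathbf{A}_{\Obar}$ the spectral curve $\widetilde{C}_a \to \Sigma$ carries the $\SO_{2n}$-involution $\sigma : \lambda \mapsto -\lambda$ and is smooth away from $x$, where its singularity type is the one dictated by $\Obar$. I would pass to the partial normalization $\widehat{C}_a$ obtained by resolving exactly the nilpotent-residue singularity, transport the $\sigma$-action, and use a BNR-type correspondence that matches generic points of $(h_{\Obar,\mathrm{aff}}^\pm)^{-1}(a)$ with $\sigma$-anti-invariant line bundles on $\widehat{C}_a$ with prescribed torsion/degree data. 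The moduli of such line bundles is a torsor under the Prym of $\widehat{C}_a \to \widehat{C}_a/\sigma$, which we take as the definition of $\Prym_{\Obar,a}$.

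The principal obstacle will be part (2): the bookkeeping of which local Pfaffian-like sections appear, together with the verification that the induced relations are \emph{exactly} the Veronese relations and nothing stronger. Unlike in types B and C treated in \cite{WWW24}, the Pfaffian of the full matrix interacts nontrivially with the block-Pfaffians of the nilpotent residue, so controlling redundancies requires a careful orbit-by-orbit analysis based on the partition of $\Obar$, and this is where the distinction between special and non-special orbits (and the associated two components $\mathbf{Higgs}^\pm_{\Obar}$) must be tracked with care.
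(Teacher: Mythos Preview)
Your Part~(3) has a real gap. You propose to identify the generic fiber with $\sigma$-anti-invariant line bundles on the normalized spectral curve $\widehat{C}_a$ and to \emph{define} $\Prym_{\Obar,a}$ as the Prym of $\widehat{C}_a \to \widehat{C}_a/\sigma$. But the map from $F_a^\pm$ to such line bundles is not a bijection: by the local decomposition (Theorem~\ref{Thm. decomposition}) it is a finite cover of degree $2^{\beta(\mathbf{d})-c(\mathbf{d})}$, the extra data being a choice of \emph{$\iota$-isotropic subspace} in a skyscraper quotient $Q_i$ attached to each type~D2 block $\mathbf{T}_i$. The paper's $\Prym_{\Obar,a}$ is accordingly not the naive Prym but a connected component of a fiber product $\prod_{\Prym_a}\Prym_{W_i,a}$ of covers indexed by these D2 blocks (see the definition preceding Proposition~\ref{Prop: factor through}). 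Without this structure you can neither prove the fiber is a torsor nor that it is connected.

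This cascades into Part~(1). Your birationality argument (``extra functions arise as rational functions in the characteristic coefficients'') is not a proof; it is exactly the point at issue. The paper's route is to establish connectedness of the generic fiber \emph{first} via the torsor description above, then observe that $(h_{\Obar}^\pm)_*\sO$ is finite of generic rank one over $\mathbf{H}_{\Obar}$, so normality of $\mathbf{Higgs}_{\Obar}^\pm$ forces $\mathbf{A}_{\Obar}$ to be the normalization. Thus Part~(3), including the cover construction you omitted, is logically prior. Your sketch of Part~(2) is broadly right; the paper makes the square relations explicit via polynomials $p_i(\lambda)$ attached to maximal runs of D1* blocks and identifies the normalization with a (chained) degree-$2$ Veronese ring by an elementary integral-closure argument (Lemmas~\ref{lem:single normalization} and~\ref{lem:chained v2 is normalization}); this bookkeeping is routine once the $p_i$ are written down.
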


Moreover, we give a criterion for the smoothness of $\bf{A}_{\Obar}$ and $\bf{H}_{\Obar}$ in Corollary \ref{smoothness}. In particular, if they are smooth, then they are isomorphic to an affine space.

The abelian variety $\Prym_{\Obar,a}$ is constructed as a finite cover of the Prym variety of the normalized spectral curve, hence admits a natural polarization. Moreover, we have

\begin{theorem}[Theorem~\ref{thm.O fiber}]\label{thm.intro self-dual}
    The nilpotent orbit $\bf{O}$ is special if and only if $\Prym_{\Obar, a}$ is self-dual for generic $a$ in $\mathbf{A}_{\Obar}$.
\end{theorem}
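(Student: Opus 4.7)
The plan is to realize the polarization on $\Prym_{\Obar,a}$ explicitly via the spectral description used in Theorem~\ref{thm.O fiber} and to identify the kernel of this polarization with a combinatorial invariant of $\mathbf{O}$ that vanishes exactly when $\mathbf{O}$ is special.

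First, I would unwind the construction of $\Prym_{\Obar,a}$ as a finite cover of the Prym variety $\Prym(\tilde\Sigma_a / \Sigma)$ of the normalized spectral curve $\tilde\Sigma_a$. This curve carries the involution $y\mapsto -y$ intrinsic to type $D$, and the ramification at the fiber over the marked point $x$ is dictated by the partition $\lambda \vdash 2n$ labelling $\mathbf{O}$. The theta polarization of $\Jac(\tilde\Sigma_a)$ restricts to a polarization of the Prym, and the isogeny $\Prym_{\Obar,a}\to \Prym(\tilde\Sigma_a/\Sigma)$ pulls this back to a polarization on $\Prym_{\Obar,a}$ whose elementary divisors I would compute by isolating local contributions: the geometry away from $x$ produces only principally polarized factors, so the type of the polarization is determined by the local datum at $x$.

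Second, I would identify this local datum with representation-theoretic data of $\mathbf{O}$. Following the strategy of \cite{WWW24} for types $B$ and $C$, the kernel of the induced polarization on $\Prym_{\Obar,a}$ should be canonically isomorphic to the quotient $A(\mathbf{O})/\bar A(\mathbf{O})$ of the component group by Lusztig's canonical quotient, sitting inside the $2$-torsion of the local Prym. By Lusztig's combinatorial characterization, this quotient is trivial if and only if $\mathbf{O}$ is special, equivalently if $\lambda$ is stable under the Spaltenstein $D$-collapse, and both directions of the theorem then follow at once: trivial kernel gives self-duality, and nontrivial kernel produces an explicit obstruction to self-duality.

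The principal obstacle is the second step: producing a clean and functorial identification between the kernel of the induced polarization and $A(\mathbf{O})/\bar A(\mathbf{O})$ in type $D$. Two features complicate matters compared with types $B$ and $C$. First, the Pfaffian appearing in the definition of $\mathbf{H}_{\Obar}$ distinguishes the two components $\mathbf{Higgs}_{\Obar}^{\pm}$ and contributes an extra $\mathbb{Z}/2\mathbb{Z}$ that must be tracked through the polarization computation. Second, very even orbits in type $D$, for which $\lambda$ has only even parts, split and require separate treatment, since the generalized Springer map behaves differently on them. Once these subtleties are reconciled---most plausibly by a direct local computation on an $\mathfrak{sl}_2$-slice and a comparison with Sommers' description of $\bar A(\mathbf{O})$---the equivalence in the theorem falls out from the combinatorics of special orbits in type $D$.
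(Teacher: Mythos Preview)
Your strategy diverges from the paper's, and the gap is exactly where you locate it.

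The paper's argument is much more elementary and never touches the component group $A(\mathbf{O})$ or Lusztig's canonical quotient $\bar A(\mathbf{O})$. It rests on two ingredients already in place: (i) Lemma~\ref{lem.special}, which characterizes special orbits in type~D purely combinatorially as those whose partition contains no type~D2 block of length greater than~$2$; and (ii) the explicit description of $\Prym_{\Obar,a}$ as a connected component of the fiber product $\prod_{\Prym_a}\Prym_{W_i,a}$ indexed by the type~D2 blocks $\mathbf{T}_i$. Citing \cite[Lemma~5.12]{WWW24} together with the construction of each $\Prym_{W_i,a}$, the paper concludes that $\Prym_{\Obar,a}$ is self-dual if and only if every type~D2 block has length~$2$, which by (i) is equivalent to $\mathbf{O}$ being special. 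The whole proof is two sentences. The Pfaffian bookkeeping and the very-even subtleties you worry about never arise, because the block decomposition of the partition handles everything uniformly.

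Your route through an identification of the polarization kernel with $A(\mathbf{O})/\bar A(\mathbf{O})$ is more ambitious but remains speculative: you have not produced the identification, and the assertion that this quotient is trivial if and only if $\mathbf{O}$ is special is not a standard fact in type~D and would itself need proof. (Your aside that speciality is ``equivalently if $\lambda$ is stable under the Spaltenstein $D$-collapse'' is also off: every partition in $\mathcal{P}_1(2n)$ is already fixed by $D$-collapse; you presumably mean stability under the Lusztig--Spaltenstein involution $d$.) Even if your program could be carried out, it would be a detour: the block-by-block analysis via \cite{WWW24} gives the answer directly.
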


Quoted from Lusztig~\cite{Lus97}, special orbits play a key role in several problems in representation theory. Unfortunately, their definition is total non-geometric, and as a result, special orbits are often regarded as rather mysterious objects. The theorem here strengthen the ``speciality" of special nilpotent orbits from the point view of Hitchin systems. This implies a possible geometric interpretation of special nilpotent orbits from Yun's global Springer theory. 

The self-duality of generic Hitchin fiber has more explanation from the perspective of mirror symmetry, which may provides connection between special nilpotent orbits and mirror symmetry. 

One may also compare this phenomenon to \cite[Theorem A]{WWW24}, which says that for two nilpotent orbits in type B and C, they are special and Springer dual if and only if the associated residually nilpotent Hitchin systems share isomorphic Hitchin base.

Let us now come to the usual parabolic Hitchin system. Let $P<G$ be a parabolic subgroup, recall the generalized Springer map:
\begin{equation*}
    \mu_P: T^*(G/P) \longrightarrow \Obar_R,
\end{equation*}
where $\mathbf{O}_R$ is the Richardson orbit (which is special), and $P$ is a polarization of $\mathbf{O}_R$.

We use $\bf{Higgs}_P$ to denote the moduli space of semistable parabolic Higgs bundles. This space also has two connected components and we use $\bf{Higgs}_P^{\pm}$ to denote either component.

It can be shown that $\mathbf{Higgs}^{\pm}_P$ and $\mathbf{Higgs}_{\Obar_R}^{\pm}$ share the same Hitchin base $\mathbf{H}_{\Obar_R}$. Now we also define the Coulomb Hitchin base as the affinization:
\[
    \mathbf{A}_{P} := \operatorname{Spec}\left( \mathbb{C} \left[ \mathbf{Higgs}_{P}^{\pm} \right] \right).
\] To understand the structure of $\mathbf{A}_P$, we begin by analyzing the generic Hitchin fiber of $h_{\Obar_R}$. 

Using the generalized Springer map, together with general results on the covers of Prym varieties established in \cite{WWW24}, we surprisely find that for generic $a$ in $\bf{H}_{\Obar_R}$, the fiber of $a$ in $\mathbf{Higgs}^{\pm}_P$ has $\deg \mu_P$ many components, since in the case of type B/C the group of connected components of generic parabolic Hitchin fibers is $\pi_1(\SO_{2n+1})$\footnote{In type B case, the moduli space has two connected components.} or $\pi_1(\Sp_{2n})$, which coincides with the non-parabolic setting.

Hence $\bf{A}_P$ should be a finite cover of $\bf{H}_{\Obar_R}$. In order to describe $\bf{A}_P$ explicitly, we need to construct new regular functions other than the coefficients of characteristic polynomial on the parabolic Higgs moduli $\mathbf{Higgs}^{\pm}_P$. We call these new regular functions by ``new Pfaffians" following Donagi, and the existence of which is inspired by \cite[Conjecture 1]{BDDP23}. See also \cite{DDD24}.

\begin{theorem}[Theorem~\ref{thm:Tac's conj}]
    The parabolic Hitchin map $h_{P}^{\pm}$ factors through the Coulomb Hitchin base:
    \begin{align*}
    \xymatrix{
        \bf{Higgs}^\pm_{P} \ar[dr]_{h_{P}^\pm} \ar[rr]^{h_{P,\rm{aff}}^\pm} &    & \bf{A}_{P} \ar[dl]^{f_P} \\
        & \bf{H_{\Obar_R}} &
    },
    \end{align*}
    such that 
    \begin{itemize}
        \item[1.] $\bf{A}_P$ is a finite cover of $\bf{H}_{\Obar_R}$ of degree $\deg \mu_P$;
        \item[2.] $\bf{A}_P$ is isomorphic to an affine space;
        \item[3.] For $a$ generic in $\bf{A}_P$, the fiber $(h_{P,\rm{aff}}^\pm)^{-1}(a)$ is a torsor of the self-dual abelian variety $\Prym_{\Obar_R, a}$.
    \end{itemize}
\end{theorem}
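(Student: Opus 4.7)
The plan is to construct $\mathbf{A}_P$ explicitly by enriching the coordinate ring of $\mathbf{H}_{\Obar_R}$ with the ``new Pfaffians'' mentioned just before the statement, and then to leverage the structure of $\mathbf{A}_{\Obar_R}$ established in the first main theorem.

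First, I would analyze the generic Hitchin fibers of $h_P^\pm$. A parabolic Higgs bundle, whose residue at the marked point lies in the nilpotent radical $\mathfrak{n}_P$, projects via the generalized Springer map $\mu_P$ to a residually nilpotent Higgs bundle with residue in $\Obar_R$, yielding a natural finite morphism $\mathbf{Higgs}_P^\pm \to \mathbf{Higgs}_{\Obar_R}^\pm$ over the shared base $\mathbf{H}_{\Obar_R}$. Combining the description of $(h_{\Obar_R}^\pm)^{-1}(a)$ as a $\Prym_{\Obar_R,a}$-torsor from the first main theorem with the cover-of-Prym machinery of \cite{WWW24}, I would show that for generic $a$ the fiber $(h_P^\pm)^{-1}(a)$ is a disjoint union of $\deg\mu_P$ torsors over $\Prym_{\Obar_R,a}$, matching the component count indicated in the introduction.

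Next, I would construct the new Pfaffians as regular functions on $\mathbf{Higgs}_P^\pm$. Because the component count forces more regular functions than those pulled back from $\mathbf{H}_{\Obar_R}$, and inspired by \cite[Conjecture 1]{BDDP23} (see also \cite{DDD24}), I would use the parabolic flag at $x$ to define block-Pfaffian-type spectral invariants whose squares coincide, after pullback, with the Veronese coordinates of $\mathbf{A}_{\Obar_R}$ (in particular the classical Pfaffian). These are precisely the ``square roots'' that are ill-defined on the residually nilpotent side but become regular once the flag is present. Regularity would be verified via a Jacobson--Morozov transverse slice local model at $x$, combined with a spreading-out argument over $\Sigma$. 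With these functions in hand, set $\mathbf{A}_P := \Spec\,\CC[\mathbf{Higgs}_P^\pm]$; the universal property of affinization then produces the factorization $h_P^\pm = f_P \circ h_{P,\mathrm{aff}}^\pm$.

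Finally, pulling back $\CC[\mathbf{H}_{\Obar_R}]$ together with the new Pfaffians should generate $\CC[\mathbf{A}_P]$, and the square-root relations straighten out the Veronese factors of $\mathbf{A}_{\Obar_R}$ into honest affine lines, so that $\mathbf{A}_P$ is an affine space, giving~(2); matching component counts over a generic $a$ forces $\deg f_P = \deg\mu_P$, giving~(1); and the fiber $(h_{P,\mathrm{aff}}^\pm)^{-1}(a)$ is a single connected component of the parabolic Hitchin fiber over $f_P(a)$, hence a $\Prym_{\Obar_R,a}$-torsor, with $\Prym_{\Obar_R,a}$ self-dual by Theorem~\ref{thm.intro self-dual} since Richardson orbits are always special, giving~(3). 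The main obstacle will be the construction and regularity of the new Pfaffians in the middle step: they are not restrictions of $\Ad$-invariant polynomials on $\mathfrak{g}$, they depend intrinsically on the parabolic flag at $x$, and showing simultaneously that they extend regularly across the boundary of the semistable locus, that they generate the affinization with precisely the relations needed to desingularize the Veronese factors, and that they behave correctly with respect to the two components $\mathbf{Higgs}_P^\pm$, will require a delicate local-to-global patching argument.
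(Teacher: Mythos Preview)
Your three--step plan---count generic-fiber components via the Springer map, construct ``new Pfaffians'' from the $P$-flag, then show these generate the affinization as an affine space of the right degree---is exactly the paper's architecture, and your final paragraph's degree-matching and self-duality arguments coincide with the paper's. Two points of execution differ and are worth flagging.

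First, the construction of the new Pfaffians in the paper is more concrete than ``block-Pfaffian-type spectral invariants'': it restricts $(E,\theta)$ to the formal disc and invokes the local decomposition theorem (Theorem~\ref{Thm. decomposition} and Corollary~\ref{coro:decomposition over general field}) to split the local Higgs bundle as $\bigoplus_i \Ker T_i(\theta)$; then it shows (Proposition~\ref{prop:maximal isotropic}) that for each $2j-1\in I(P)$ the $P$-filtration induces a \emph{maximal isotropic} subbundle of $\Ker T_j(\theta)$, and uses Lemma~\ref{Volume form} to convert this into an $\SO$-framing on that summand, so that $\operatorname{pf}(g_j\theta_j)$ is well-defined. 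The flag is not used to build the invariants directly but to pick out the framing on each summand.

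Second, the regularity argument is not via a Jacobson--Morozov transverse slice. The paper constructs $\operatorname{pf}(g_j\theta_j)$ only as a \emph{rational} function (defined on a dense open where the local decomposition exists), identifies it via Lemma~\ref{coe of product} with $\sqrt{c_{\sum_{s\le 2j}d_s}\big/c_{\sum_{s\le 2j-2}d_s}}$, and then argues by downward induction on the block index---anchored at the classical Pfaffian $c_{2n}$---together with normality of $\mathbf{Higgs}_P^\pm$ to promote each $\sqrt{c_{\sum_{s\le 2j-2}d_s}}$ to a regular function. This inductive ``chaining'' through adjacent blocks (cases D1*/D2 handled separately) is precisely what invokes Lemma~\ref{lem:chained v2 is normalization} and collapses the chained Veronese factors of $\mathbf{A}_{\Obar_R}$ into polynomial coordinates. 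Your transverse-slice/spreading-out idea may be workable, but the paper's route is the identification-with-coefficient-ratios plus induction plus normality, and that chaining is what makes the affine-space claim go through cleanly.
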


Roughly speaking, the construction of ``new Pfaffian" proceeds as follows. For a parabolic Higgs bundle $(E, \theta, (E|_x)_P)$, we firstly restrict it to a formal neighborhood of $x$, then by Theorem \ref{Thm. decomposition}, we can decompose $(E, \theta)$ locally as direct sum of some ``local Higgs bundles". The $\operatorname{SO_{2n}}$ structure on $E$ only induces $\operatorname{O_{2n_i}}$ structure on each direct summand and hence we can not define Pfaffian for each direct summand since we do not have a framing on each direct summand, see the discussion after Definition \ref{Coulomb Hitchin base}. 

However, by Proposition \ref{prop:maximal isotropic}, the $P$ reduction on $E|_x$ gives a choice of maximal isotropic subspace for the fiber at $x$ of some direct summands, which gives a framing on their fibers at $x$ by Lemma \ref{Volume form}. Consider global version of this construction, then we can define the ``new Pfaffian" using the framings. For details, see Section \ref{par Hitchin}.

Besides the construction of these ``new Pfaffian", one remarkable feature of this theorem is that the Coulomb Hitchin base $\bf{A}_P$ is always isomorphic to an affine space. This is conjectured by Tachikawa (\cite[Conjecture 8]{Tac}), although the parabolic Higgs moduli we considered here may be different. We believe that this feature deserves further explanation.

There are many analogies between Lagrangian fibrations of projective irreducible symplectic manifolds and Hitchin systems, e.g., the numerical P=W \cite{SY22} and P=W \cite{MS24,HMMS22,MSY25}. Concerning base, Matsushita, in a series of works \cite{Ma1, Ma2, Ma3}, proved that any holomorphic fibration from a projective irreducible sympletic manifold onto a projective manifold with \emph{connected fibers} is a Lagrangian fibration. Moreover, the base manifold is necessarily Fano and shares the same Betti numbers as the projective space. He further conjectured that the base must be the projective space, which is confirmed later by Hwang\cite{Hw08}. By analogy, in our setting the ``true’’ Hitchin base turns out to be an affine space.

\subsection*{Acknowledgements}
We would like to express our gratitude to Alexey Bondal for mentioning the work of Tachikawa~\cite{Tac} during the third-named author's visit to IPMU, which ultimately led to this work. Y. Wen would like to thank Hiraku Nakajima for his helpful discussions.

Part of this manuscript was written during all authors' visit to the Institute for Advanced Study in Mathematics at Zhejiang University. We sincerely appreciate the institute’s inspiring environment and support.

Bin Wang is supported by General Research Fund 14307022 and Early Career Scheme 24307121 of Michael McBreen. X. Wen is
supported by the Chongqing Natural Science Foundation Innovation and Development Joint Fund
(CSTB2023NSCQ-LZX0031). Y. Wen is supported by a KIAS Individual Grant (MG083902) at the Korea Institute for Advanced Study. 

\section{Preliminaries}

\subsection{Nilpotent Orbits}

We use ``$\equiv$'' to denote congruence modulo 2. Let $\mathcal{P}(2n)$ denote the set of partitions 
\[
\mathbf{d} = [d_1 \geq d_2 \geq \cdots \geq d_r]
\]
of $2n$. For a partition $\bf{d}$, we use $\#\bf{d}$ to denote the number of non-zero parts in $\bf{d}$. For $\varepsilon=\pm 1$, we define
\begin{align*}
    \mathcal{P}_\varepsilon(2n) = \left\{ [d_1, \ldots, d_r] \in \mathcal{P}(2n) \ \middle| \ \#\{ j \mid d_j = i \}\equiv 0 \text{ for all } i \text{ such that } (-1)^i = \varepsilon \right\}.
\end{align*}

It is well known that $\mathcal{P}_{1}(2n)$ is in bijection with the set of nilpotent orbits in $\mathfrak{so}_{2n}$, except in the case where all parts of a partition $\mathbf{d} \in \mathcal{P}_1(2n)$ are even. The corresponding orbit $\mathbf{O}_{\mathbf{d}}$ is then called \emph{very even}, and it splits into two distinct $\SO_{2n}$-orbits:
$\mathbf{O}_{\mathbf{d}} = \mathbf{O}_{\mathbf{d}}^{I} \bigsqcup \mathbf{O}_{\mathbf{d}}^{II}.$

\subsubsection{Special orbits}

By a result of Springer~\cite[Theorem 6.10]{Spr}, there is an injective map from the set of irreducible representations of the Weyl group to the set of irreducible $G$-equivariant local systems on nilpotent orbits. A nilpotent orbit $\mathbf{O}$ is called \emph{special} if it corresponds to the trivial local system and the associated Weyl group representation is special in the sense of Lusztig~\cite{Lus79}.

Equivalently, a partition $\mathbf{d} \in \mathcal{P}_1(2n)$ is called \emph{special} if its transpose partition $\mathbf{d}^t$ lies in $\mathcal{P}_{-1}(2n)$. Following~\cite{WWW24}, we classify partitions in type D into the following basic types:
\begin{itemize}
    \item Type D1: $[\alpha, \alpha]$, where $\alpha \equiv 1$.
    
    \item Type D1*: $[\beta, \beta]$, where $\beta \equiv 0$.
    
    \item Type D2: $[\alpha_1, \beta_1, \beta_1, \beta_2, \beta_2, \ldots, \beta_k, \beta_k, \alpha_2]$, where 
    \[
        \alpha_1 > \beta_1 \geq \beta_2 \geq \cdots \geq \beta_k > \alpha_2, \quad 
        \alpha_i \equiv 1, \quad 
        \beta_j \equiv 0, \quad 
        k \geq 0.
    \]
\end{itemize}

Any type D partition $\bf{d}$ can be uniquely decomposed as
\[
     \bf{d} = [\bf{T}_1, \bf{T}_2, \ldots, \bf{T}_l]
\]
where each block $\bf{T}_i$ is of types D1, D1*, or D2. See \cite{FWW} for further detail on classical types.

\begin{lemma}\label{lem.special}
	A type D partition $\bf{d}$ is special if and only if it does not contain a type D2 block of the form $[\alpha_1, \beta_1, \beta_1, \ldots, \beta_k, \beta_k, \alpha_2]$ with $k \geq 1$.
\end{lemma}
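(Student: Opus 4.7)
My plan is to translate the ``special'' condition into a combinatorial condition on the distinct parts of $\bf{d}$ and then match it directly to the block decomposition. Writing the distinct parts of $\bf{d}$ as $v_1>v_2>\cdots>v_s$ with multiplicities $m_1,\ldots,m_s$ and setting $v_{s+1}:=0$, a direct count of column heights in the Young diagram shows that, for each $k\in\{1,\ldots,s\}$, $\bf{d}^t$ has a single part equal to $M_k:=m_1+\cdots+m_k$ with multiplicity $v_k-v_{k+1}$. Since $M_1<M_2<\cdots$ are distinct, the condition $\bf{d}^t\in\mathcal{P}_{-1}(2n)$ is equivalent to: whenever $M_k$ is odd, $v_k\equiv v_{k+1}\pmod 2$. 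Because $\bf{d}\in\mathcal{P}_1(2n)$ forces $m_i$ to be even whenever $v_i$ is even, only the odd-valued $v_i$'s contribute to $M_k$ modulo $2$, so $M_k\pmod 2$ equals the number of odd parts of $\bf{d}$ strictly greater than $v_{k+1}$.

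Next I would decompose the sequence $v_1,\ldots,v_s$ into its maximal monochromatic parity runs, label the odd ones $R_1,\ldots,R_p$ from top to bottom, and set $c_j:=\sum_{v_i\in R_j}m_i$. Parity switches between $v_k$ and $v_{k+1}$ occur precisely at run boundaries, together with the virtual boundary at $v_{s+1}=0$ when the bottom run is odd. A short induction on the cumulative sum of the $c_j$ modulo $2$ then shows that ``$M_k$ is even at every parity switch'' is equivalent to $c_j$ being even for \emph{every} odd run $R_j$.

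Finally I would match this to the block decomposition. The canonical decomposition is obtained by listing the odd parts of $\bf{d}$ in weakly decreasing order $\alpha^{(1)}\ge\cdots\ge\alpha^{(2t)}$ and pairing them consecutively $(\alpha^{(2i-1)},\alpha^{(2i)})$; each pair produces a D1 block when the two entries are equal, a D2 block with $k=0$ when they are distinct but no even value of $\bf{d}$ lies strictly between them, and a D2 block with $k\ge 1$ when some even value of $\bf{d}$ does lie strictly between them (the enclosed evens being the interior of that block), while any leftover evens form D1* blocks. Two distinct odd entries enclose an even value of $\bf{d}$ iff they lie in different odd runs, and the consecutive pairing straddles the boundary between $R_j$ and $R_{j+1}$ iff the partial sum $c_1+\cdots+c_j$ is odd. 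Hence a D2 block with $k\ge 1$ appears iff some $c_j$ is odd, which by the previous step is iff $\bf{d}$ is not special.

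The main obstacle will be justifying that the greedy pairing above really reproduces the paper's canonical decomposition—in particular that all even parts trapped inside a cross-run pair must be allocated to the resulting D2 block rather than broken off into separate D1* blocks, and that this is the \emph{unique} way to decompose $\bf{d}$ into D1, D1*, and D2 pieces. Once this normal form is pinned down, everything else is a clean cumulative parity count.
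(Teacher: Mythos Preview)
The paper does not supply a proof of this lemma; it is stated as a combinatorial fact, with the block decomposition itself attributed to an external reference. Your argument is correct, and the obstacle you flag at the end dissolves once you use that the decomposition $\mathbf{d}=[\mathbf{T}_1,\ldots,\mathbf{T}_l]$ is into \emph{contiguous} segments of the weakly decreasing sequence $\mathbf{d}$.

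Concretely: the first entry of $\mathbf{T}_1$ is $d_1$. If $d_1$ is even, the only admissible block beginning with $d_1$ is the D1* block $[d_1,d_1]$. If $d_1$ is odd, $\mathbf{T}_1$ is of type D1 or D2 and its last entry is odd; since the block is a contiguous segment, every intermediate entry must be an even $\beta_j$, so the last entry is forced to be the \emph{next} odd entry of $\mathbf{d}$ after $d_1$, and all sandwiched evens are trapped inside $\mathbf{T}_1$ (they cannot be pushed to $\mathbf{T}_2$, whose entries are all $\le$ that last odd). Inducting on the tail gives uniqueness and shows the decomposition coincides exactly with your greedy odd-pairing. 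After that, your chain of equivalences---$\mathbf{d}$ special iff every $c_j$ is even iff no consecutive odd pair straddles two odd runs iff no D2 block has $k\ge 1$---goes through exactly as written.
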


\subsubsection{Richardson orbits}\label{sec:Richarson}
 
A nilpotent orbit is called \emph{Richardson}, denoted by $\bf{O}_{R}$, if there exists a parabolic subgroup $P<G$ such the image of moment map (also known as \emph{generalized Springer map}) is the closure of the orbit:
\begin{align}\label{Springer map}
    \mu_P: T^*(\SO_{2n}/P) \longrightarrow \Obar_R.
\end{align}
This parabolic subgroup is called a \emph{polarization} of the Richardson orbit.

In general, this map is generically finite. Its degree can be computed using the structure of $P$. Let $\gp = \mathrm{Lie}(P)$, which admits a Levi decomposition $\gp = \gl \oplus \gn$. The Levi subalgebra $\gl$ has the form:
\begin{align} \label{Levi type}
    \gl \cong \mathfrak{gl}_{p_1} \oplus \ldots \oplus \mathfrak{gl}_{p_m} \oplus \mathfrak{so}_{q}, 
\end{align}
where $p_i \geq 0$, $ q \geq 0$ even, and $q \neq 2$. It is known that when $q = 0$, there exist two non-conjugate parabolic subgroups of $\SO_{2n}$ sharing the same Levi type as in~\eqref{Levi type}.

For each $i=1,\ldots,m$, define the partition $\bf{1}_i= \left[ 1_{i1}, \ldots, 1_{i p_i} \right]$ (all ones) of $p_i$, and let $\bf{1}_0= \left[ 1_{01}, \ldots, 1_{0q} \right]$ be a partition in $\mathfrak{so}_{q}$. 

We then define a partition $\bf{d}_P \in \cP(2n)$ associated with $P$ as:
\begin{align}
    \bf{d}_P &= \left[ d'_1, \ldots, d'_r \right],	\quad \text{with} \quad d'_j = 2 \sum_{i=1}^m 1_{ij} + 1_{0j}. \label{induction}
\end{align}
If $\mathbf{d}_P  \notin \mathcal{P}_1(2n)$, then at least one even part appears with an odd multiplicity. Let k be the largest such part. We define the \emph{D-collapse} of $\mathbf{d}_P$, denoted $(\mathbf{d}_P)_D$, by iteratively:
\begin{enumerate}
	\item[1.] Replacing the last occurrence of $k$ by $k-1$,
	\item[2.] Replacing the first subsequent part  $k'< k-1$ by $k'+1$,
\end{enumerate}
until the resulting partition lies in $\cP_1(2n)$.

Geometrically, if $\bf{O}_{\bf{d}_P}$ is the nilpotent orbit in $\mathfrak{sl}_{2n}$ associated with $\bf{d}_P$, then $\Obar_{(\mathbf{d}_P)_D} = \Obar_{\bf{d}_P} \cap \mathfrak{so}_{2n} $.

The degree of the generalized Springer map is closely tied to this collapse. Define
\begin{align}\label{I(P)}
    I(P) = \{ j \in \N \mid j \equiv 1,\; d'_j \equiv 0 \; \text{and}\; d'_j \geq d'_{j+1}+2 \}.
\end{align}

\begin{lemma}\label{lem:stru of Ti}
Let $\mathbf{d} = [d_1, \ldots, d_r] = [\mathbf{T}_1, \mathbf{T}_2, \ldots, \mathbf{T}_l] \in \cP_1(2n)$ be a partition associated to Richardson orbit $\bf{O}_R$. Let $P<G$ be a polarization as above. Then it satisfies the following conditions:

\begin{itemize}
    \item[1.] It does not contain a type D2 block of the form 
    \[
        [\alpha_1, \beta_1, \beta_1, \ldots, \beta_k, \beta_k, \alpha_2] \quad \text{with } k \geq 1.
    \]
    Consequently, each $\mathbf{T}_i$ consists of two parts, and the total number of parts is even, i.e., $r = 2l$.

    \item[2.] There exists an integer $m_0$ such that:
    \begin{itemize}
        \item[2.1.] For $i < m_0$, each block $\mathbf{T}_i$ is either of type D1 or D2;
        \item[2.2.] The block $\mathbf{T}_{m_0}$ is of type D1*;
        \item[2.3.] For $i> m_0$, if $\bf{T}_i=[d_{2i-1},d_{2i}]$ is not of type D1*, then $2i-1 \in I(P)$;
        \item[2.4.] For any $\mathbf{T}_i = [d_{2i-1}, d_{2i}]$ and $\mathbf{T}_{j} = [d_{2j-1}, d_{2j}]$, $j>i$, if $2i-1 \in I(P)$, then $\bf{T}_{i-1} \neq \bf{T}_i$ and $d_{2i} > d_{2j-1}$. Moreover, if $d_{2j-1} \equiv 1$, then $2j-1 \in I(P)$.  
    \end{itemize}
\end{itemize}
\end{lemma}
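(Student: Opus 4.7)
The plan is to reduce part 1 to the fact that Richardson orbits are always special, and to establish part 2 by a direct combinatorial analysis of the D-collapse acting on the explicit partition $\mathbf{d}_P$ associated to the Levi datum $(p_1, \ldots, p_m, q)$.

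For part 1, I would invoke the classical Lusztig--Spaltenstein theorem that Richardson orbits in any reductive Lie algebra are special. By Lemma~\ref{lem.special}, this rules out any type D2 block with $k \geq 1$ in $\mathbf{d}$. The remaining block types---D1, D1*, and D2 with $k=0$---each consist of exactly two parts, forcing $r = 2l$.

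For part 2, I would first unpack the formula $d'_j = 2|\{i : p_i \geq j\}| + 1_{0j}$: the partition $\mathbf{d}_P$ splits into an initial \emph{odd segment} of the first $q$ parts (each of the form $2c_j + 1$) and a subsequent \emph{even segment} of parts equal to $2c_j$, where $c_j := |\{i : p_i \geq j\}|$ is weakly decreasing. The $\mathcal{P}_1$-condition is automatic on odd parts, so the D-collapse modifies only the even segment: each step replaces the pair $(d'_{j_0}, d'_{j_0+1}) = (k, k')$ at the last occurrence $j_0$ of the largest bad even value $k$ by $(k-1, k'+1)$, with implicit zeros at the tail participating as newly appended $1$'s. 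I would then define $m_0$ as the index of the first D1* block of $\mathbf{d}$, with the convention $m_0 = 0$ when no such block exists; any D1* block must arise from a pair of consecutive equal even parts of $\mathbf{d}_P$ untouched by every subsequent collapse step.

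With $m_0$ identified, conditions 2.1 and 2.2 follow because the blocks $\mathbf{T}_i$ with $i < m_0$ originate from the odd segment and are therefore either pairs of equal odd values (D1) or strictly decreasing odd pairs created by a collapse step (D2 with $k=0$). For condition 2.3, the key point is that a block $\mathbf{T}_i$ with $i > m_0$ fails to be D1* precisely when $(d'_{2i-1}, d'_{2i})$ was the pair rewritten by some collapse step, forcing $d'_{2i-1}$ to be even with $d'_{2i-1} \geq d'_{2i} + 2$, i.e.\ $2i-1 \in I(P)$. The alignment $j_0 = 2i-1$ (rather than $j_0 = 2i$) is automatic: a rewrite at even $j_0$ would produce a block of shape $[k, k-1]$ with $k$ even, which is none of the allowed types D1, D1*, or D2 with $k=0$. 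Condition 2.4 then follows because a triggered collapse step at $2i-1$ transfers the $+1$ to the immediately following position, so $d_{2i}$ strictly dominates every subsequent $d_{2j-1}$; a later odd-valued $d_{2j-1}$ must itself have been manufactured by a subsequent collapse, placing $2j-1 \in I(P)$; and $\mathbf{T}_{i-1} \ne \mathbf{T}_i$ follows from the strict drop at position $2i-1$.

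The main technical obstacle is the careful bookkeeping of cascaded collapse steps, since one step may create new bad even values at smaller indices. One must verify inductively that the interleaving of newly produced odd parts with surviving equal even pairs is exactly the claimed block decomposition, that collapse rewrites always align with odd positions $2i-1$, and that $m_0$ is identified uniquely. I expect the cleanest route is strong induction on the number of bad even values in $\mathbf{d}_P$, combined with a case split on the drop $d'_{2i-1} - d'_{2i}$ and on whether the largest bad value straddles the boundary between the odd and even segments, exploiting monotonicity of $c_j$ throughout.
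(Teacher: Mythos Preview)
Your plan coincides with the paper's one-line proof, which simply says the lemma follows from the rule of D-collapse and the definition of $I(P)$; you are unpacking exactly that. One correction: your justification of 2.1---that blocks $\mathbf{T}_i$ with $i<m_0$ ``originate from the odd segment''---is false (take $\mathbf{d}_P=[6,4,4,4,2]$ with $q=0$, which collapses to $[5,5,4,4,1,1]$ with $m_0=2$, so $\mathbf{T}_1=[5,5]$ is a collapsed even-segment pair sitting before the first D1* block), but since 2.1 is tautological once $m_0$ is defined as the first D1* index, this slip is harmless.
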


\begin{proof}
    It follows from the rule of D-collapse and the definition of $I(P)$.
\end{proof}

Using the following Jordan basis, by the work of Hesselink~\cite{He78}, we are able to describe the generic fiber of the generalized Springer map~\eqref{Springer map}.

Let the partition $\mathbf{d} = [d_1,\ldots,d_r]$ define a Young tableau $\mathrm{Y}(\mathbf{d}) \subset \mathbb{Z}_{>0}^2$, where $(i, j) \in \mathrm{Y}(\mathbf{d})$ if and only if $1 \leq j \leq r$ and $1 \leq i \leq d_j$.

We choose a Jordan basis of $\mathbb{C}^{2n}$, denoted by $\left\{ v(i,j) \right\}_{(i,j) \in \mathrm{Y}(\mathbf{d})}$, for a nilpotent element $e \in \mathbf{O}$ as follows:
\begin{itemize}
    \item $e \cdot v(i,j) = v(i-1, j)$ for $i > 1$, and $e \cdot v(1,j) = 0$;
    \item The pairing satisfies $\langle v(i,j), v(p,q) \rangle \neq 0$ if and only if $i + p = d_j + 1$ and $q = \tau(j)$. Here $\langle -, - \rangle$ is the bilinear form on $\mathbb{C}^{2n}$, and $\tau$ is an involutive permutation of $\{1, \ldots, r\}$ such that $\tau^2 = \mathrm{id}$, $d_{\tau(j)} = d_j$, and $\tau(j) \neq j$ if $d_j \equiv 0 \pmod{2}$. We fix a choice of $\tau$ such that $\tau(j) = j$ whenever $d_j$ is odd.
\end{itemize}

Let $\mathbf{O}_R$ be a Richardson orbit with polarization $P$, and suppose the Levi subalgebra is given by~\eqref{Levi type}. Define
\[
    V_{i} = \mathbb{C}v \left( \frac{ d_{2i-1}+1}{2}, 2i-1 \right) \oplus \mathbb{C} v \left( \frac{d_{2i}+1}{2}, 2i \right), \quad \text{for} \quad 2i-1 \in I(P).
\]
Then from \cite[Theorem 7.1]{He78} or \cite{FWW}, we have the following description of generic fibers of \eqref{Springer map}.

\begin{proposition} \label{prop.deg_mu_P}
The generic fiber of the generalized Springer map is 
\begin{align}\label{Spal_fib}
    \prod_{2i-1 \in I(P)} \OG(1, V_{i}), \quad \text{for}\quad q \geq 4.
\end{align}
When $q=0$, \eqref{Spal_fib} is the union of the generic fibers associated to two non-conjugate parabolic subgroups with the same Levi type.

As a consequence, the degree of the map $\mu_P$ is given by
\[
    \deg \mu_P = 
    \begin{cases}
        2^{\# I(P)}, & \text{if } q \geq 4, \\
        2^{\# I(P) - 1}, & \text{if } q = 0.
    \end{cases}
\]
\end{proposition}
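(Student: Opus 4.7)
The plan is to reduce the statement to Hesselink's explicit description of Springer fibers in classical Lie algebras and then read off the answer in the Jordan basis fixed above. Fix a generic Richardson element $e \in \bf{O}_R$. By definition, a point of $\mu_P^{-1}(e)$ corresponds to a parabolic subalgebra $\gp' \subset \mathfrak{so}_{2n}$ conjugate to $\gp$ with $e$ lying in its nilradical, or equivalently to a partial isotropic flag of the prescribed type $(p_1,\ldots,p_m,q)$ that is stabilized by $e$. The question is therefore to enumerate (and parametrize as a variety) such flags, and this is exactly the content of \cite[Theorem 7.1]{He78}.

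First, I would unwind the combinatorics of the polarization. By Lemma~\ref{lem:stru of Ti}, each block $\bf{T}_i = [d_{2i-1}, d_{2i}]$ of $\bf{d}$ consists of two parts that are (almost) paired by $\tau$; for $i$ with $2i - 1 \in I(P)$ the block is of ``ambiguous'' type (two even rows of the same length), and these are precisely the positions at which Hesselink's algorithm produces a genuine choice. Concretely, for fixed $i$, the local contribution to the fiber is the space of $e$-stable isotropic lines in the 2-plane $V_i = \CC v\!\left(\frac{d_{2i-1}+1}{2},\, 2i-1\right) \oplus \CC v\!\left(\frac{d_{2i}+1}{2},\, 2i\right)$ spanned by the two middle Jordan vectors of the paired blocks: any $e$-stable flag in Hesselink's construction is forced to coincide with the canonical one on all other rows, and at row $\tfrac{d_{2i-1}+1}{2}$ one may rotate inside $V_i$ subject to the isotropy constraint. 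This yields the local factor $\OG(1, V_i)$.

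Next, I would assemble the local contributions. For distinct $i,i' \in I(P)$ the two-planes $V_i, V_{i'}$ are orthogonal for the bilinear form (since their indices satisfy $i + i' \neq d_j + 1$ under $\tau$), and so the isotropy and $e$-stability conditions decouple. Consequently the global fiber is the product $\prod_{2i-1 \in I(P)} \OG(1, V_i)$, matching~\eqref{Spal_fib}. Each orthogonal Grassmannian $\OG(1, V_i)$ is the variety of isotropic lines in a $2$-dimensional symplectic\slash quadratic space, which consists of exactly two points (the two isotropic directions of a hyperbolic plane). This gives $\deg \mu_P = 2^{\# I(P)}$ in the case $q \geq 4$.

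Finally, for the case $q = 0$, the Levi is of pure type $A$ and there are two non-conjugate parabolics of $\SO_{2n}$ with this Levi type, distinguished by the two families of maximal isotropic subspaces in type D. The disjoint union of their individual Springer fibers equals the fiber described above over $\SO_{2n}/P \sqcup \SO_{2n}/P'$, so each individual fiber has half as many points, giving $\deg \mu_P = 2^{\#I(P)-1}$. The only genuine obstacle is the bookkeeping in Hesselink's algorithm that identifies the $V_i$'s with the positions $2i-1 \in I(P)$ and verifies that no further freedom survives after imposing isotropy and $e$-stability; this can be handled block-by-block using the explicit Jordan basis of Section~\ref{sec:Richarson} and is carried out in the companion paper \cite{FWW}.
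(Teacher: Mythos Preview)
Your overall strategy coincides with the paper's: both identify $\mu_P^{-1}(e)$ with the set of admissible isotropic flags for $e$ and locate the freedom in the two-planes $V_i$ indexed by $I(P)$. Where the two diverge is in how the crucial uniqueness step is handled. You assert that ``any $e$-stable flag in Hesselink's construction is forced to coincide with the canonical one on all other rows'' and then defer the verification to \cite{FWW}. The paper instead gives a short self-contained argument: it observes that the entire filtration is determined by the single step $F_m^\perp$ (because $e^k(F_m^\perp)=F_{m-k}^\perp$ by a dimension count on the quotients $E_i=\ker(e^{m+1-i})\cap F_m^\perp/\ker(e^{m-i})\cap F_m^\perp$), then pins down $F_m^\perp$ between an explicit subspace $E$ and $E\oplus\bigoplus_{2i-1\in I(P)}V_i$, so that the residual choice is exactly a maximal isotropic $W\subset\bigoplus V_i$, which the same dimension count shows must split as $\bigoplus\OG(1,V_i)$. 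This is precisely the ``bookkeeping'' you acknowledge as the genuine obstacle; the paper's contribution is to carry it out rather than cite it.

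One small inaccuracy: you describe the blocks with $2i-1\in I(P)$ as ``two even rows of the same length''. In the collapsed partition $\mathbf{d}$ these rows are odd (the index $(d_{2i-1}+1)/2$ must be integral) and need not have equal length; the evenness and the gap $d'_j\geq d'_{j+1}+2$ pertain to the pre-collapse partition $\mathbf{d}_P$. This does not affect the logic but does muddle the picture of what $V_i$ actually is.
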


\begin{proof}
To study the generic fiber of generalized Springer map
\[
    T^*(\SO_{2n}/P) \longrightarrow \Obar_R,
\]
it suffices to construct admissible filtrations $\Fil^{\bullet}_{P}$ of the form
\begin{align*}
    \CC^{2n}=F_0 \supset F_1 \supset \ldots \supset F_{m} \supset F_{m}^\perp \supset \ldots \supset F_{1}^\perp \supset F_{0}^\perp = 0,
\end{align*}
of Levi type $(p_1,\ldots,p_m;q)$, i.e.,
\begin{align*}
    \dim F_m/F_m^\perp = q, \; ( q \equiv 0,\; q \neq 2) \quad \dim F_{i-1}/F_i = p_i \quad \text{for} \quad i=1, \ldots, m,  
\end{align*}
such that the fixed nilpotent element $e \in \bf{O}_R$ satisfies
\[
\begin{cases}
    e(F_i) \subset F_{i+1}, \quad \text{for} \quad 0 \leq i \leq m-1, \\
    e(F_m) \subset (F_m)^\perp, \\
    e(F_{m-i}^\perp) \subset F_{m-i-1}^\perp, \quad \text{for} \quad 0 \leq i \leq m-1.
\end{cases}
\]
To simplify the argument, we assume $p_1 \leq p_2 \leq \ldots \leq p_m$.

Let the partition associated to $\mathbf{O}_R$ be $\mathbf{d} = [d_1, d_2, \ldots]$, and let the induced partition from $P$ be $\mathbf{d}_P = [d'_1, d'_2, \ldots]$, defined as in \eqref{induction}. Then $\mathbf{d} = (\mathbf{d}_P)_D$, the D-collapse of $\mathbf{d}_P$, and the collapses occur at indices $j$ such that $d'_j \equiv d'_{j+1} \equiv 0$ and $d'_j > d'_{j+1}$, i.e., for $j \in I(P)$. From the construction \eqref{induction}, we know that for $p_i \equiv 1$ such that $q < p_i \neq p_{i+1}$, then $p_i \in I(P)$.

Define the subspace
\[
    E=\bigoplus\limits_{ \substack{1 \leq i \leq m \,:\, p_i \notin I(P) \\ 1 \leq j \leq p_i}  } \CC v(m+1-i, j)  \oplus \bigoplus\limits_{ \substack{1 \leq i \leq m \,:\, p_i \in I(P) \\ 1 \leq j \leq p_i-1}  }  \CC v(m+1-i, j),
\]
since $F_m^\perp \subset F_m$, it can be shown that
\[
    E
    \subset F_m^\perp \subset 
    E \oplus \bigoplus\limits_{i \,:\, 2i-1 \in I(P)} V_i.
\]
Note that $\dim F_m^\perp = \sum_{i=1}^m p_i$. Moreover, due to the induced non-degenerate pairing on each $V_i$, it remains to choose a maximal isotropic subspace
\[
    W \subset \bigoplus\limits_{i \,:\, 2i-1 \in I(P)} V_i.
\]

Let
\begin{align*}
E_i= \frac{\ker \left( e^{m+1-i} \right) \cap F^\perp_m}{\ker \left( e^{m-i} \right) \cap F^\perp_m}.
\end{align*} 
Since $e^{i-1}(F_m^\perp) \subset F_{m+1-i}^\perp$, it follows that $\dim E_i \leq p_i$. Given that $\dim F_m^\perp = \sum_{i=1}^m p_i$, we must have equality, i.e., $\dim E_i = p_i$. From this, we deduce that
\[
    W = \bigoplus_{i \,:\, 2i-1 \in I(P)} \OG(1, V_i).
\]

Moreover, by dimension counting we obtain $e^k(F^\perp_m) = F^\perp_{m-k}$. Hence, to construct $\Fil_P^\bullet$, it suffices to determine $F_m^\perp$. This completes the construction.

Finally, we note that when $q = 0$, there exist two non-conjugate parabolic subgroups with the same Levi type. In this case, the disconnected fiber constructed above is the union of the generic fibers associated with these non-conjugate parabolics.
\end{proof}

\subsection{Kazhdan--Lusztig Map}

Let $\sN \subset \mathfrak{g}$ denote the set of nilpotent elements, which decomposes into a finite union of nilpotent $G$-orbits under the adjoint action. We denote a typical nilpotent orbit by $\mathbf{O}$. The Kazhdan--Lusztig map is defined as follows.

We use $\mathcal{O}$ to denote the formal power series ring $\mathbb{C}[\![t]\!]$ and use $\mathcal{K}$ to denote its fractional field. Given an element $\theta_0 \in \mathbf{O}$, choose a generic lift $\theta \in \theta_0 + tL^+\g$ such that $\theta$ is regular semisimple in the loop algebra $L\mathfrak{g}$. Then its centralizer $Z_{L G}(\theta)$ is a maximal torus in the loop group $L G$. Kazhdan and Lusztig \cite{KL88} showed that the conjugacy class of $Z_{L G}(\theta)$ is independent of the choice of lift, yielding a well-defined map:
\begin{align*}
\{\text{Nilpotent orbits}\}&\rightarrow \{\text{rational conjugacy classes of maximal tori in } L G\} \\
\theta_0 &\mapsto Z_{L G}(\theta)
\end{align*}

Moreover, they proved that rational conjugacy classes of maximal tori in $L G$ are in bijection with conjugacy classes in the Weyl group $\mathsf{W}$. Thus, we obtain the composite Kazhdan--Lusztig map:
\[
\{\text{Nilpotent orbits}\} \xrightarrow{\mathrm{KL}} \{\text{conjugacy classes in the Weyl group } \mathsf{W}\}.
\]

Yun~\cite{Yun21} proved that the Kazhdan--Lusztig map is injective. For classical groups, a concrete description was given by Spaltenstein~\cite{Spa88}, which we now recall. For more details, see Spaltenstein~\cite{Spa88}.

\subsubsection{Spaltenstein's Interpretation}

Let $\theta_0 \in \mathbf{O}$ be a nilpotent element with partition $\mathbf{d} = [d_1, \cdots, d_r]$. 
\begin{definition}
    We denote $L^+\g_{\bf{O}}:=\{\theta\in L^+\g\mid\theta\mod t\in\bf{O}\}$. We denote $L^+\gc_{\bf{O}}$ as the image of the Chevalley map in $L^+\gc$.
\end{definition}
Yun~\cite{Yun21} shows that $L^+\gc_\bf{O}$ is a finitely presented constructible locally closed subscheme of $L^+\gc$. Now choose a generic lift $\theta = \theta_0 + \cdots \in L^+\g$. In other words, $\theta$ is a generic elements in $L^+\g_{\bf{O}}$. Here genericity is as defined in \cite{KL88}, or see generic ``shallow elements" in \cite{Yun21}. Consider the characteristic polynomial
\[
    \chi(\theta) = \det(\lambda I - \theta) \in \mathbb{C}[\![t]\!][\lambda].
\]
If $G = \SO_{2n}$, we set $f(\lambda) = \chi(\theta)$ and factor it into irreducibles:
\begin{equation}\label{eq:decomposition of char}
f = \prod_{i=1}^r f_i, \quad f_i \in \mathbb{C}[\![t]\!][\lambda].
\end{equation}
Let $e_i = \deg f_i$ and define the partition $\mathbf{deg} = [e_1, \cdots ,e_r]$. Notice that the length of $\bf{d}$ is equal to the length of $\bf{deg}$. Since $f(-\lambda) = f(\lambda)$, each irreducible factor fits into one of the following cases, which determines how we construct the pair of partitions $(\alpha, \beta)$:

\begin{enumerate}
    \item[1.] If $e_i$ is odd, then there exists a unique $i'$ such that $f_i(-\lambda) = -f_{i'}(\lambda)$. In this case, $\alpha$ gets a part $e_i$.
    
    \item[2.] If $e_i$ is even and $f_i(\lambda) = f_i(-\lambda)$, then $\beta$ gets a part $\frac{e_i}{2}$.
    
    \item[3.] If $e_i$ is even and there exists a unique $i'$ such that $f_i(-\lambda) = f_{i'}(\lambda)$, then $\alpha$ gets a part $e_i$. We refer to such factors as \emph{even dual to each other}.
\end{enumerate}

\begin{remark}\label{rmk:dual to each other}
In other words, parts in $\alpha$ correspond to pairs of irreducible factors that are dual to each other.
\end{remark}
For later use, we state the decomposition for a non-algebraically closed field $\Bbbk$ where $\mathbb{C}\subset\Bbbk\subset\bar{\Bbbk}$. Given $\theta_0\in \bf{O}(\Bbbk)$. We choose a generic lifting $\theta=\theta_0+\ldots\in L^+\g(\Bbbk)$. Let $f$ be as above. In particular, $f\in \Bbbk[\![t]\!][\lambda]$. We still have the decomposition as in \eqref{eq:decomposition of char}, with each $f_i\in\bar{\Bbbk}[\![t]\!][\lambda]$. 
\begin{lemma}(\cite{SWW22},\cite[\S 6.2]{She23}\label{lem:factorization over non-closed field})
    For each $d_i\in \bf{deg}$, the product $f_{d_i}=\prod\limits_{\{j \,\mid\, \deg f_j=d_i\}}f_j$ has coefficients in $\Bbbk[\![t]\!]$.
\end{lemma}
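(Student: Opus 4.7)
The plan is to prove the lemma by Galois descent applied to the unique factorization of $f$ in $\bar{\Bbbk}[\![t]\!][\lambda]$. Since $\theta\in L^+\g(\Bbbk)$, the characteristic polynomial $f=\det(\lambda I-\theta)$ lies in $\Bbbk[\![t]\!][\lambda]$. The ring $\bar{\Bbbk}[\![t]\!]$ is a complete discrete valuation ring, hence a UFD, so by Gauss's lemma the polynomial ring $\bar{\Bbbk}[\![t]\!][\lambda]$ is also a UFD; after normalizing each irreducible factor $f_i$ to be monic in $\lambda$, the factorization $f=\prod_i f_i$ becomes strictly unique (not merely unique up to units).

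Next I would let the absolute Galois group $\Gamma=\mathrm{Gal}(\bar{\Bbbk}/\Bbbk)$ act coefficient-wise on $\bar{\Bbbk}[\![t]\!][\lambda]$; since $\Bbbk$ has characteristic zero, the fixed subring is exactly $\Bbbk[\![t]\!][\lambda]$. Because $f\in\Bbbk[\![t]\!][\lambda]$, applying any $\sigma\in\Gamma$ to the identity $f=\prod_i f_i$ produces another factorization of $f$ into monic irreducibles in $\bar{\Bbbk}[\![t]\!][\lambda]$, so by uniqueness the tuple $(\sigma(f_i))_i$ is a permutation of $(f_i)_i$. Since $\sigma$ acts as a $\Bbbk[\![t]\!]$-algebra automorphism fixing $\lambda$, it preserves the $\lambda$-degree, and therefore this permutation preserves the grouping of factors by their degree.

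Consequently, for each value $d_i\in\bf{deg}$, the subset $\{f_j\mid\deg f_j=d_i\}$ is $\Gamma$-stable, so the product $f_{d_i}=\prod_{\deg f_j=d_i}f_j$ is $\Gamma$-invariant. Descent then yields $f_{d_i}\in\Bbbk[\![t]\!][\lambda]$, which is exactly the claim. No serious obstacle is anticipated; the only mild subtlety is ensuring that $\Gamma$ genuinely permutes the factors rather than merely permuting them up to units in $\bar{\Bbbk}[\![t]\!]^\times$, which is dispatched by the monic normalization.
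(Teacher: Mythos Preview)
Your argument is correct and self-contained. The paper does not supply a formal proof of this lemma; it cites \cite{SWW22} and \cite[\S 6.2]{She23} and offers only a geometric sketch: each irreducible factor of a fixed degree $d_i$ corresponds to a smooth branch appearing in the $d_i$-th iterated blow-up of the spectral curve over the marked point, and the union of these branches (equivalently, their intersection scheme with the exceptional divisor) is a geometric object defined over $\Bbbk$, so the product $f_{d_i}$ descends.

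Your route is genuinely different and more elementary. You bypass the geometry entirely and argue directly by Galois descent on the unique monic factorization in the UFD $\bar{\Bbbk}[\![t]\!][\lambda]$: the absolute Galois group of $\Bbbk$ permutes the monic irreducible factors and preserves $\lambda$-degree, so each degree-graded subproduct is invariant and hence lies in $\Bbbk[\![t]\!][\lambda]$. This is cleaner for the purpose at hand and requires no input from the spectral geometry. The paper's viewpoint, on the other hand, explains \emph{why} one groups factors by degree in the first place --- the degree is the number of blow-ups needed to separate a branch, an intrinsic invariant --- and connects the statement to the normalization procedure used elsewhere in the paper. Both approaches are valid; yours is shorter, theirs is more suggestive of the surrounding geometry.
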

The basic idea is that each irreducible factor of degree $d_i$ define a smooth irreducible branch in the $d_i$-th blow-up over the marked points. When over an algebraically field, they are separated, in the sense that they have $\#\{i\mid\deg f_i=d_i\}$ intersection points with the exceptional divisor. The intersection is defined over the non-algebraically closed field $\Bbbk$ though not separated into distinct points. This is why we use the product of irreducible factors of same degrees.

\begin{proposition}[{\cite[Proposition 6.4]{Spa88}}] \label{prop:deg}
    Given a partition $\bf{d}=[d_1, \cdots , d_r]=[\bf{T}_1, \cdots , \bf{T}_l]$, the associated partition $\bf{deg}$ is given by $[\bf{T}_1', \cdots , \bf{T}_l']$, where $\bf{T}_i'$ is defined as follows:
    \begin{itemize}
        \item If $\bf{T}_i$ is of type D1 or D1*, then $\bf{T}_i'=\bf{T}_i$;
        \item If $\bf{T}_i=[\alpha_1, \beta_1, \beta_1, \cdots , \alpha_2]$ is of type D2, then $\bf{T}_i'=[\alpha_1-1, \beta_1, \beta_1, \cdots , \alpha_2+1]$.
    \end{itemize}
    Moreover, the Kazhdan--Lusztig map is given by the following rule: if $\bf{T}_i$ is of type D1 or D1*, for $e\in \bf{T}_i'$, then $\alpha$ gets a part $e$; and if $\bf{T}_i$ is of type D2, for $e\in \bf{T}_i'$, then $\beta$ gets a part $\frac{e}{2}$.
\end{proposition}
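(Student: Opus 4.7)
The plan is to deduce the statement directly from Spaltenstein's classification \cite[Proposition 6.4]{Spa88}, and then carefully translate his formulation into the block decomposition $[\mathbf{T}_1,\ldots,\mathbf{T}_l]$ adopted here. Concretely, given a generic (shallow) lift $\theta = \theta_0 + t\theta_1 + \cdots \in L^+\mathfrak{g}$, the monic characteristic polynomial $f(\lambda) = \det(\lambda I - \theta) \in \mathcal{O}[\lambda]$ factors over $\overline{\mathcal{K}}$ into irreducibles whose degrees depend only on the partition $\mathbf{d}$, and each irreducible factor $f_i$ falls into exactly one of the three cases in the excerpt by how $\lambda \mapsto -\lambda$ permutes it with the other factors. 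The proof reduces to identifying, for each block type in $\mathbf{d}$, (a) the degrees of the resulting factors, and (b) their behavior under $\lambda\mapsto-\lambda$.

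The analysis is then a block-by-block check. For a D1 block $[\alpha,\alpha]$ with $\alpha\equiv 1$, each Jordan block deforms separately into an irreducible factor of degree $\alpha$, and the two factors are interchanged by $\lambda \mapsto -\lambda$ (Case 1), contributing two parts equal to $\alpha$ to the partition $\alpha$; this matches $\mathbf{T}_i' = \mathbf{T}_i$. For a D1${}^*$ block $[\beta,\beta]$ with $\beta\equiv 0$, the same degree pattern persists but the pairing forces the two even factors to be even-dual (Case 3), again contributing two parts $\beta$ to $\alpha$. For a D2 block $[\alpha_1,\beta_1,\beta_1,\ldots,\beta_k,\beta_k,\alpha_2]$, the odd Jordan blocks at the two ends cannot survive as individual odd irreducible factors once we take a generic orthogonal perturbation: $\alpha_1$ and $\alpha_2$ shift by $\pm 1$ to become even, producing an even-length chain of factors of degrees $\alpha_1{-}1,\beta_1,\beta_1,\ldots,\beta_k,\beta_k,\alpha_2{+}1$, each of which is self-dual under $\lambda \mapsto -\lambda$ (Case 2), hence contributing $e/2$ parts to $\beta$. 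Summing across blocks yields both the claimed formula for $\mathbf{deg}$ and the KL rule.

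The main obstacle is the D2 case: one must justify rigorously why the generic orthogonal perturbation indeed causes the odd end-blocks to merge with the adjacent even chain, yielding only even self-dual factors rather than separate factors. I would carry this out by choosing the Jordan--Witt basis $\{v(i,j)\}$ from Section 2.1 adapted to a single D2 block, writing down an explicit generic element $\theta_1$ compatible with the form $\langle -,-\rangle$, and computing $\det(\lambda - \theta_0 - t\theta_1)$ modulo sufficiently high powers of $t$; the Newton polygon of this polynomial, viewed in $\mathcal{O}[\lambda]$, produces exactly one irreducible factor per predicted degree, and the anti-involution induced by the form establishes the self-duality. Once this is verified for a single D2 block, the general case follows from the direct-sum decomposition of $\theta_0$ along the blocks $\mathbf{T}_i$, which is respected by a generic lift up to higher-order terms that do not affect the Newton polygon.
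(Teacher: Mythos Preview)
The paper does not supply its own proof of this proposition: it is stated with the attribution \cite[Proposition 6.4]{Spa88} and no argument follows. So there is no ``paper's own proof'' to compare against; the authors are simply quoting Spaltenstein's result and translating his output into their block notation $[\mathbf{T}_1,\ldots,\mathbf{T}_l]$.

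Your proposal therefore goes beyond what the paper does, by sketching how one would actually verify the statement block by block. That sketch is reasonable in outline, and your identification of the D2 case as the only nontrivial step is correct. One caution: the claim that a generic orthogonal perturbation of a single D2 block produces exactly the degrees $\alpha_1-1,\beta_1,\beta_1,\ldots,\beta_k,\beta_k,\alpha_2+1$, all self-dual, is precisely the content of Spaltenstein's computation, and a full justification via Newton polygons requires more care than you indicate---in particular, one must check that the Newton polygon has the right slopes \emph{and} that the corresponding factors are irreducible and satisfy $f_i(-\lambda)=f_i(\lambda)$ rather than being paired. This is not automatic from the polygon alone. If you intend to write this out, you will essentially be reproducing Spaltenstein's argument; citing him, as the paper does, is the efficient route.
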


\section{Decompose Local Higgs Bundles}

 For $\theta: \sK^{2n}\rightarrow \sK^{2n}$, we fix a nondegenerate symmetric two-form $g:\sK^{2n}\otimes \sK^{2n}\rightarrow \sK$, such that $g(\theta -,-)+g(-,\theta -)=0$. The set of residually nilpotent $\SO_{2n}$-Higgs bundle associated with $\theta$ and a nilpotent orbit $\bf{O}$ is given by

\[
{\mathbf{Gr}}_{\theta, \bf{O}}:=\left\{E \subset \sK^{2n}\ \left| \
\begin{aligned}
    & E \;\text{is a rank-}2n\; \text{lattice, such that} \\
    & g|_{E} \; \text{is perfect, with values in}\  \sO; \\
    & E \text{ is } \theta \text{ invariant}; \\
    & \theta(0) \in \bf{O}.
\end{aligned}\right.\right\}.
\]

\begin{definition}
    For $\bf{d}$ and $\bf{deg}$ as before, we define $\tilde{\beta}(\bf{d})=\#\{e_i\in \bf{deg}\mid e_i\equiv 0\}$,  $c(\bf{d})=\#\{\text{type D2}\}$ and  $$\beta(\bf{d})=\tilde{\beta}(\bf{d})-2\#\{\text{type D1*}\}.$$
\end{definition}

\begin{theorem}\label{Thm. decomposition}
    If $\chi(\theta)=f(\lambda)$ is generic in $L\mathfrak{c}_{\bf{O}}$, there is a finite cover of degree $2^{\beta(\bf{d})-c(\bf{d})}$: 
    \[
    \mathbf{Gr}_{\theta, \mathbf{O}} \longrightarrow     
       \{ (\sL\hookrightarrow \sK_f, \sigma^*\sL\cong\sL^{\vee}\otimes\sO(\sR) \mid \sL \text{ is a  rank $1$ free module of} \;\bar{\sO}_f
       \},
    \]
    where $\sigma$ is the involution $\lambda\mapsto -\lambda$ and $\sR$ is the divisor corresponding to fixed points.
\end{theorem}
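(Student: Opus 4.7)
The plan is to build a BNR-style spectral correspondence for $\mathbf{Gr}_{\theta,\bf{O}}$ and then compute the fiber of this correspondence block-by-block using the Spaltenstein decomposition recalled in Proposition~\ref{prop:deg}.

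First I would use the genericity of $f = \chi(\theta)$ to identify $(\sK^{2n},\theta)$, as a $\sK[\lambda]$-module, with the regular module $\sK_f := \sK[\lambda]/(f)$ via a cyclic vector. Under this identification a $\theta$-invariant $\sO$-lattice $E \subset \sK^{2n}$ becomes a rank-one $\sO_f := \sO[\lambda]/(f)$-submodule of $\sK_f$ with full generic fiber. The anti-compatibility $g(\theta -,-)+g(-,\theta -)=0$ transports $g$ to a $\sK$-bilinear form on $\sK_f$ anti-commuting with multiplication by $\lambda$; under the involution $\sigma:\lambda \mapsto -\lambda$ this furnishes a canonical identification $\sigma^{*}\sK_f \cong \sK_f^{\vee}$, and perfectness of $g|_E$ with values in $\sO$ becomes the desired $\sO_f$-module duality for $E$.

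Next I would pass to the normalization. Since $f$ is generic in $L\gc_{\bf{O}}$, the integral closure $\bar{\sO}_f$ is a product of discrete valuation rings, one per irreducible factor $f_i$ from Lemma~\ref{lem:factorization over non-closed field}. To $E$ I assign $\sL := \bar{\sO}_f \cdot E \subset \sK_f$, which is automatically a rank-one free $\bar{\sO}_f$-module, the desired embedded line bundle. The symmetric form then produces the isomorphism $\sigma^{*}\sL \cong \sL^{\vee} \otimes \sO(\sR)$, with $\sR$ supported on the $\sigma$-fixed components of $\Spec \bar{\sO}_f$---namely those indexed by the self-dual even factors of $f$ (the Case~2 factors coming from D2 blocks in the sense of Proposition~\ref{prop:deg}), the divisor being forced by comparing valuations there.

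The last step is to compute the fiber of $E \mapsto \sL$ by measuring the gap between $\sO_f$-lattices $E$ with $\bar{\sO}_f\cdot E = \sL$. This gap is supported on the non-smooth locus of $\Spec \sO_f$, which by Proposition~\ref{prop:deg} is concentrated inside each D2 block where several $f_i$ share the same degree. A block-by-block analysis should give: D1 blocks (only odd self-dual factors) contribute $1$; D1* blocks (an even-dual pair interchanged by $\sigma$) also contribute $1$, since the form pins the lattice down; and a D2 block with $k$ paired rows $\beta_j$ produces $2k+2$ self-dual even factors whose $2^{2k+2}$ naive gluings are cut down by a single $\Z/2$ constraint from the residual nilpotency $\theta(0)\in\bf{O}$, giving $2^{2k+1}$. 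Multiplying block contributions yields the claimed $2^{\beta(\bf{d})-c(\bf{d})}$. The main obstacle will be this last bookkeeping inside a D2 block: one must verify that the residual constraint $\theta(0)\in\bf{O}$, rather than only $\theta(0)\in\Obar$, cuts out exactly one $\Z/2$-factor among the $2^{2k+2}$ independent gluings. Concretely this requires an explicit local model for each D2 block---an $\mathfrak{sl}_2$-triple compatible with the symmetric form---together with a direct verification that the prescribed Jordan type at $t=0$ forces precisely one relation among the normalization parameters.
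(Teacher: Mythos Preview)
Your overall strategy---construct the map to line bundles on the normalized spectral cover via a BNR/normalization procedure, then compute fibers block-by-block according to the Spaltenstein decomposition---is exactly what the paper does, and the block contributions you list are all numerically correct. Your identification of the D2 block as the place requiring the most bookkeeping is also accurate; the paper defers that analysis to \cite{WWW24}, where the count of $\iota$-isotropic subspaces of the quotient $Q_i$ is carried out.

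The substantive gap is your treatment of the D1* case. You write that for an even-dual pair ``the form pins the lattice down,'' but this is in fact the main new technical content of this section and is far from automatic. The paper establishes it via Proposition~\ref{prop:direct summand even dual} together with Lemmas~\ref{lem:even intersection} and~\ref{lem:zero intersection}: one must show that $K_1(0)\cap K_2(0)=0$, where $K_i=\Ker f_i(\theta)$. The argument first uses isotropy of $K_1(0)$ to force the partition of $\theta(0)$ on the relevant summand to be exactly $[2m,2m]$ (ruling out a larger odd part), then shows by an explicit Jordan-basis computation that $\codim(K_1(0)\cap K_2(0))$ is even, and finally uses the genericity of the odd-degree coefficients of $f_1$ to force the intersection to zero. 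The remark following Lemma~\ref{lem:even intersection} is pointed: the even-codimension step \emph{fails for type~C}, so this is a genuinely type-D phenomenon, not a formal consequence of having a symmetric form. Without this step you cannot exclude nontrivial fiber contributions from D1* blocks. A smaller point: your description of D1 blocks as ``odd self-dual factors'' is off---by Proposition~\ref{prop:deg} they give an odd-degree pair with $f_i(-\lambda)=-f_{i'}(\lambda)$, interchanged by $\sigma$ rather than fixed; their contribution of $1$ is likewise nontrivial (handled in \cite{WWW24}) and rests on the same direct-summand mechanism.
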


In the following, we fix $E\in \bf{Gr}_{\theta, \bf{O}}$ where $\chi(\theta)$ is generic in $L\mathfrak{c}_{\bf{O}}$.

\begin{definition}
    For a submodule $i_F: F\hookrightarrow E$, we say that $F$ is a \emph{$\theta$ direct summand} of $E$ if
    \begin{itemize}
        \item $F$ is $\theta$ invariant.
        \item There is an $\mathcal{O}$ morphism $s_F: E \rightarrow F $ such that it is compatible with $\theta$ and $s_F\circ i_F$ is identity.
    \end{itemize}  
\end{definition}

We have the decomposition of characteristic polynomial:
\[
\chi(\theta)=f(\lambda)=\prod_{i=1}^{k}f_i.
\]
Since $f(-\lambda)=f(\lambda)$, we can talk about $f_i$ are selfdual or dual to another factor. The following proposition is crucial for a moduli interpretation of generic Hitchin fibers. We use $K_i$ to denote $\Ker f_{i}(\theta)\subset E$.
\begin{proposition}\label{prop:direct summand even dual}
    Consider a pair of irreducible factors $(f_i,f_{i'})$ such that $f_i(\lambda)=f_{i'}(-\lambda)$ and $\deg f_i$ is even. Then $K_i\oplus K_{i'}$ is a $\theta$ direct summand of $E$.
\end{proposition}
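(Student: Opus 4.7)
The plan is to build the $\theta$-invariant complement to $F := K_i \oplus K_{i'}$ out of the symmetric form $g$, and then reduce the direct-summand statement to the perfectness over $\mathcal{O}$ of the restricted pairing $g|_F$. The key structural input will be the even-duality $f_i(\lambda) = f_{i'}(-\lambda)$ together with $\deg f_i$ even, which geometrically means that the two spectral branches $\mathcal{S}_i$ and $\mathcal{S}_{i'}$ are interchanged freely by the involution $\sigma: \lambda \mapsto -\lambda$.

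First I would pin down the orthogonality pattern. Since $\theta$ is $g$-skew, we have $g(p(\theta)u, v) = g(u, p(-\theta)v)$ for every $p \in \mathcal{O}[\lambda]$. Applying this with $p = f_l$ and using the pairwise coprimality of the $\{f_j\}$ over $\mathcal{K}$, one shows $g(K_l, K_m) = 0$ unless $m = l'$, where $l'$ is defined by $f_{l'}(\lambda) = f_l(-\lambda)$. In particular, $g$ vanishes on $K_i \times K_i$ and on $K_{i'} \times K_{i'}$, so $g|_F$ is the symmetrization of a single off-diagonal pairing $K_i \otimes K_{i'} \to \mathcal{O}$. Setting $F^\perp := \{v \in E : g(v, F) = 0\}$, skewness of $\theta$ makes $F^\perp$ automatically $\theta$-invariant, and the orthogonality gives $F^\perp \supseteq \bigoplus_{l \ne i, i'} K_l$, with equality after tensoring with $\mathcal{K}$. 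Since $F \cap F^\perp$ has trivial generic fiber and $E$ is $\mathcal{O}$-torsion-free, $F \cap F^\perp = 0$, so $F \oplus F^\perp \hookrightarrow E$ is an injection of free $\mathcal{O}$-modules of the same rank. Using that $g|_E$ is perfect and the decomposition is $g$-orthogonal, a standard index-comparison argument shows this injection is an isomorphism iff the $\mathcal{O}$-linear map $\phi: K_i \to \Hom_\mathcal{O}(K_{i'}, \mathcal{O})$ induced by $g$ is an isomorphism, equivalently iff $\det \phi \in \mathcal{O}^\times$. Once this is known, the retraction $s_F: E \to F$ is the unique map satisfying $g(s_F(v), w) = g(v, w)$ for all $w \in F$, and a direct check using $\theta F \subset F$ shows it commutes with $\theta$.

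The technical heart of the proof, which I expect to be the main obstacle, is verifying $\det \phi \in \mathcal{O}^\times$ using only the even-dual hypothesis. My plan is to exploit the free $\sigma$-action on the pair $\mathcal{S}_i \sqcup \mathcal{S}_{i'}$: the evenness of $\deg f_i$ prevents $\lambda = 0$ from being a $\sigma$-fixed point on these branches (a fixed point would require odd ramification), so the divisor $\mathcal{R}$ appearing in Theorem~\ref{Thm. decomposition} has trivial contribution from this pair. Using this, one should construct an $\mathcal{O}[\theta]$-equivariant isomorphism $K_{i'} \cong \sigma^*(K_i^\vee)$ with no extra $\mathcal{O}(\mathcal{R})$-twist, under which $g|_{K_i \times K_{i'}}$ becomes the tautological evaluation pairing on $K_i \otimes K_i^\vee$ and is thus automatically perfect. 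The delicate point is ruling out any hidden twist from the singular central fiber of the spectral curve; this is precisely what the even-degree hypothesis ensures, and it is what distinguishes the even-dual case from both the odd-dual case (where $\mathcal{R}$ is non-trivial) and the self-dual case (where $\mathcal{S}_i$ is $\sigma$-stable and a separate argument is required).
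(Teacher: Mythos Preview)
Your reduction to the perfectness of the pairing $g|_{K_i \times K_{i'}}$ is sound and essentially matches the paper's opening move (the paper phrases it as showing that $K_i \oplus K_{i'}$ equals its own saturation in $E$, after which the orthogonal splitting is automatic). The gap is entirely in your ``technical heart.''

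Your plan is to argue that since $\sigma$ acts freely on $\mathcal{S}_i \sqcup \mathcal{S}_{i'}$, one should have $K_{i'} \cong \sigma^*(K_i^\vee)$ with no $\mathcal{O}(\mathcal{R})$-twist, so that $g$ becomes the tautological evaluation pairing. But this conflates the \emph{existence} of such an isomorphism (trivially true: both sides are rank-one modules over the DVR $\mathcal{O}[\lambda]/(f_i)$) with the claim that the \emph{specific map induced by $g$} realizes it. You need the latter, and the free-$\sigma$-action heuristic does not supply it. The decisive evidence is the type~C situation: for a symplectic form $g$, the same involution $\sigma$, and the same even-degree dual-pair hypothesis, one can have $\dim\bigl(K_1(0)\cap K_2(0)\bigr)=1$ already for the partition $[2,2]$, so the reduced pairing $K_1(0)\times K_2(0)\to\mathbb{C}$ is degenerate and $K_1\oplus K_2$ is \emph{not} a direct summand. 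Since your argument never uses that $g$ is symmetric rather than alternating, it cannot be complete.

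The paper's actual proof is much more hands-on. After the saturation step it reduces to rank $4m$ with $f=f_1f_2$, $\deg f_1=\deg f_2=2m$; an isotropy argument then forces the partition of $\theta(0)$ to be exactly $[2m,2m]$. The core is two Jordan-basis computations: first, that $\codim\bigl(K_1(0)\cap K_2(0)\bigr)$ is even---this is where the symmetry of $g$ enters, through the vanishing of $\langle v,\theta(0)^{2\ell}v\rangle$, and is precisely the step that fails in type~C; second, that any vector in $K_1(0)\cap K_2(0)$ is forced, by the \emph{genericity} of the odd coefficients of $f_1$, into $\ker\theta(0)$, whose intersection with $K_1(0)$ is one-dimensional, contradicting the parity statement. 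Both the symmetry of $g$ and the genericity of $\chi(\theta)$ are used essentially; your outline invokes neither at the crucial step.
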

\begin{proof}
    Let $E_i$ be the saturation of $K_i\oplus K_{i'}$ in $E$. Then we have:
    \[
    E=E_i\oplus E_i^{\perp},
    \]
    and each is stable under $\theta$. Notice that $g|_{E_i}$ is nondegenerate. Replacing $E$ by $E_i$, we reduce to the following situation: $f=f_1f_2$ such that $f_1(\lambda)=f_2(-\lambda)$ and they have even degrees $2m$. Then we need to show that $K_1, K_2$ are $\theta$ direct summands of $E$. 

    Since $f=f_1f_2$ with $\deg f_1=\deg f_2=2m$, the partition of $\theta(0)$ has to be $\ge [2m,2m]$. We choose $v\in E(0)$ such that $$K(0)=\langle v,\theta(0)v,\ldots,\theta(0)^{2m-1}v \rangle .$$
    Suppose that the largest term in the partition is $d_1>2m$ which must be odd. By Jacobson--Morozov, we can find a subspace of the following form:
    \[
    W(0):=\langle w,\theta(0)w,\ldots,\theta(0)^{d_1-1}w\rangle
    \]
    and  $g$ is nondegenerate when restricted to $W(0)$.  we can rewrite $E(0)=W(0)\oplus W(0)^{\perp}$. Since $4m-d_1<2m$, $v$ is nonzero under the projection $E(0)\rightarrow W(0)$. In particular, $K_1(0)$ cannot be isotropic, a contradiction.
   
   Hence we know that $\theta(0)$ has the partition $[2m,2m]$. Then the proposition follows from Lemma \ref{lem:zero intersection}.
\end{proof}

\begin{lemma}\label{lem:even intersection}
   $\codim \left( K_1(0)\cap K_2(0) \right)\equiv 0$.
\end{lemma}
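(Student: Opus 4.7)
The plan is to realise $K_1(0)$ and $K_2(0)$ as two Lagrangian subspaces of the quadratic space $(E(0), g(0))$ and, by specialising from the generic fibre, show they lie in the same connected component of the orthogonal Grassmannian $\OG(2m, E(0))$. The parity statement then falls out of the classical spinor criterion for pairs of Lagrangians.

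First I would verify the basic linear-algebraic properties of $K_i := \ker(f_i(\theta))$. As the kernel of an $\sO$-linear endomorphism of the torsion-free module $E$, each $K_i$ is saturated, of rank $\deg f_i = 2m$; hence $K_i(0) := K_i/tK_i$ embeds in $E(0)$ as a $2m$-dimensional subspace. From $g(\theta -, -) + g(-, \theta -) = 0$ one inductively obtains $g(P(\theta)v, w) = g(v, P(-\theta)w)$ for any polynomial $P$. Taking $P = f_i$ and using the hypothesis $f_i(-\lambda) = f_{i'}(\lambda)$ (with $\{i,i'\} = \{1,2\}$) yields $g(v, f_{i'}(\theta)w) = 0$ for all $v, w \in K_i$. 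Because $f_1$ and $f_2$ are coprime in $\sK[\lambda]$, the operator $f_{i'}(\theta)$ is invertible on $K_i \otimes \sK$, so $g|_{K_i \otimes \sK} \equiv 0$. Thus each $K_i \otimes \sK$ is a Lagrangian of $(E \otimes \sK, g)$, and reducing mod $t$ makes each $K_i(0)$ an isotropic subspace of dimension $2m = \tfrac{1}{2} \dim E(0)$, hence a Lagrangian of $(E(0), g(0))$.

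The key input is the classical fact that for a split quadratic space $(V,g)$ of dimension $4m$ over a field, $\OG(2m, V)$ has two connected components $\OG^{\pm}$, and two Lagrangians $L_1, L_2$ lie in the same component if and only if
\[
    \dim(L_1 \cap L_2) \equiv 2m \pmod 2.
\]
Since $g$ is perfect over the complete DVR $\sO$ (with algebraically closed residue field), $(E, g)$ is split over $\sO$ and $\OG(2m, E)$ is a smooth proper $\sO$-scheme with two open-and-closed components $\OG^{\pm}$ specialising to the two spinor components in each geometric fibre. Each saturated sub-bundle $K_i$ gives a section $\Spec \sO \to \OG(2m, E)$; since $\Spec \sO$ is connected, this section lies entirely in one component $\OG^{\epsilon_i}$, so $K_i \otimes \sK$ and $K_i(0)$ share the same spinor label $\epsilon_i$.

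Over $\sK$, coprimeness of $f_1, f_2$ forces $(K_1 \otimes \sK) \cap (K_2 \otimes \sK) = 0$, and $0 \equiv 2m \pmod 2$ then forces $\epsilon_1 = \epsilon_2$ via the criterion. Specialising to $t = 0$, the Lagrangians $K_1(0)$ and $K_2(0)$ therefore lie in the same component of $\OG(2m, E(0))$, and a final application of the criterion gives
\[
    \dim_{\CC}(K_1(0) \cap K_2(0)) \equiv 2m \equiv 0 \pmod 2,
\]
which is the claim. The only technical point to treat carefully is that the two spinor components $\OG^{\pm}$ are actually open subschemes of $\OG(2m, E)$ \emph{over} $\sO$ — not merely geometrically — so that the label $\epsilon_i$ is well-defined and constant in the family; this is standard once $(E, g)$ is split over $\sO$, which we have established.
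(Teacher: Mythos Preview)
Your argument is correct and is a genuinely different, more conceptual route than the paper's. The paper works entirely inside the closed fibre: it picks a Jordan basis $\{w_1,\theta(0)w_1,\ldots,\theta(0)^{2m-1}w_1,w_2,\ldots\}$ adapted to the partition $[2m,2m]$, writes a cyclic generator $v$ of $K_1(0)$ in weight components $v=\sum_i v_i$, and uses the isotropy conditions $\langle v,\theta(0)^{2\ell}v\rangle=0$ to inductively kill all even components $v_{2i}$, so that $v=v_1+v_3+\cdots+v_{2m-1}$ (and likewise for $K_2(0)$). From this normal form one reads off that the intersection drops in dimension only in even steps. You instead recognise that both $K_i$ are saturated Lagrangian sub-lattices of $(E,g)$, so that the sections $\Spec\sO\to\OG(2m,E)$ must each stay in a single spinor component; over $\sK$ the intersection is zero, hence both lie in the same component, and the parity criterion for Lagrangians then forces $\dim(K_1(0)\cap K_2(0))\equiv 0\pmod 2$ at the special fibre.

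What each approach buys: the paper's hands-on computation with the Jordan basis also prepares exactly the normal form ($v=v_1+v_3+\cdots$) that is reused in the next lemma (the bootstrap showing $K_1(0)\cap K_2(0)=0$), so it is not wasted effort. Your argument is cleaner and requires no case analysis, but it does rely on the (standard) fact that $\OG(2m,E)$ over $\Spec\sO$ really decomposes into two open-and-closed subschemes once $(E,g)$ is split over $\sO$; you correctly flagged this as the one point needing care, and it is indeed fine because $\sO$ is henselian with algebraically closed residue field.
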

 
\begin{proof}
    Consider the following Jordan basis \[\{w_1,\theta(0)w_1,\ldots, \theta(0)^{2m-1}w_1, w_2,\theta(0)w_2,\ldots, \theta(0)^{2m-1}w_2\}.\] We choose a generator of $K_1(0)$ by:
    \[
    v=\sum_{i=0}^{2m-1} (a_{i1}\theta(0)^{i}w_1+a_{i2}\theta(0)^{i}w_2)=\sum_{i=0}^{2m-1}v_i
    \]
    where $v_i=a_{i1}\theta(0)^{i}w_1+a_{i2}\theta(0)^{i}w_2$ (i.e. decompose as weight spaces).

    Since $K_1(0)$ is maximal isotropic, we have the following equations:
    \[ 
    \langle v, \theta(0)^{2\ell}v\rangle=0,\ell=0,1,\ldots, m-1.
    \]
    From the equation:
    \[
    \langle v, \theta(0)^{2m-2}v\rangle=0
    \]
    we have that:
    \[
    \langle v_1,\theta(0)^{2m-2}v_2\rangle=\langle v_2,\theta(0)^{2m-2}v_1\rangle=0
    \]
    Hence there exists $\lambda\in\mathbb{C}$ such that:
    \[
    v_2=\lambda \theta(0) v_1
    \]
    Then we can choose a generator of $K_1(0)$ with $v_2=0$ by $v\mapsto v-\lambda \theta(0)v$. And by induction, we can make all the $v_{2i}=0$ in the expression of a generator of $K_1(0)$. As a result, we can find generators for $K_1(0)$ of the form $v=v_1+v_3+\ldots+v_{2m-1}$ where $v_i\in \langle \theta(0)^{i-1}w_1,\theta(0)^{i-1}w_2\rangle$. The same works for $K_2(0)$. 

    If the generators $v,v'$ of $K_1(0),K_2(0)$ share same $v_1$ but essentially different $v_3$, $\codim K_1(0)\cap K_2(0)=2$. And so on.
\end{proof}
\begin{remark}
    The above argument does not hold for type C. For example, for $[2,2]$ in type C, dual to each other cases, you can choose $v=w_1+a\theta(0)w_2, v'=w_1+b\theta(0)w_2$. Notice that they all satisfy that $\langle v, \theta(0)v\rangle=0$, i.e., maximal isotropic. But $\dim K_1(0)\cap K_2(0)=1$.
\end{remark}
The following lemma is a complement to Lemma \ref{lem:even intersection}.
\begin{lemma}\label{lem:zero intersection}
    We have $K_1(0)\cap K_2(0)=0$.
\end{lemma}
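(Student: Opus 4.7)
The plan is to exploit the cyclic $\mathbb{C}[\theta(0)]$-module structure of $K_1(0)$ and $K_2(0)$ together with the explicit form of their cyclic generators from Lemma \ref{lem:even intersection}, and reduce everything to showing that their one-dimensional socles span distinct lines in $\ker\theta(0)$.

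\textbf{Step 1 (Socle reduction).} Because the reduction $K_i(0) = K_i/tK_i$ is a rank-one free module over $\mathbb{C}[\lambda]/\lambda^{2m}$ (via $\lambda \mapsto \theta(0)$), it is cyclic under $\theta(0)$ of length $2m$ with a one-dimensional socle $\mathrm{soc}(K_i(0)) = \ker(\theta(0)|_{K_i(0)})$. Any nonzero $\theta(0)$-stable subspace of a cyclic nilpotent module contains its socle; in particular, if $W := K_1(0)\cap K_2(0)\neq 0$, then $W$ is $\theta(0)$-stable and $0\neq \mathrm{soc}(W) \subset \mathrm{soc}(K_1(0))\cap\mathrm{soc}(K_2(0))$, forcing the two socles to coincide as lines in $E(0)$.

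\textbf{Step 2 (Computing socles from cyclic generators).} By Lemma \ref{lem:even intersection}, cyclic generators for $K_1(0)$ and $K_2(0)$ may be chosen in the forms
\[
v = \sum_{j=1}^{m}\bigl(c_{j}\theta(0)^{2j-2}w_{1}+d_{j}\theta(0)^{2j-2}w_{2}\bigr),\qquad
v' = \sum_{j=1}^{m}\bigl(c_{j}'\theta(0)^{2j-2}w_{1}+d_{j}'\theta(0)^{2j-2}w_{2}\bigr).
\]
Since $\theta(0)^{k}w_{\ell}=0$ for $k\geq 2m$, the only term surviving after applying $\theta(0)^{2m-1}$ is the $j=1$ term, so
\[
\mathrm{soc}(K_1(0)) = \mathbb{C}\bigl\langle c_{1}\theta(0)^{2m-1}w_{1}+d_{1}\theta(0)^{2m-1}w_{2}\bigr\rangle,
\]
and analogously for $K_2(0)$ with $(c_{1}',d_{1}')$. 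The socles coincide if and only if $(c_{1}:d_{1})=(c_{1}':d_{1}')$ in $\mathbb{P}^{1}$.

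\textbf{Step 3 (Ruling out equality of socles).} Here I use the duality $f_{1}(\lambda)=f_{2}(-\lambda)$: the $\mathcal{O}$-linear involution $\sigma$ of $\mathcal{O}[\lambda]/(f)$ sending $\lambda\mapsto -\lambda$ swaps the ideals $(f_{1})\leftrightarrow (f_{2})$, and under the $\mathcal{O}[\lambda]/(f)$-module structure on $E$ (given by $\lambda\mapsto \theta$) this corresponds to an anticommuting operator on $E\otimes\mathcal{K}$ interchanging $K_{1}\otimes\mathcal{K}$ and $K_{2}\otimes\mathcal{K}$. The key computation is to match this duality against the explicit Jordan basis: modulo $t$ it acts by $\theta(0)^{i}w_{1}\leftrightarrow (-1)^{i}\theta(0)^{i}w_{2}$, so applied to the cyclic generator $v$ it produces a generator for $K_{2}(0)$ with leading coefficients $(c_{1}',d_{1}')=(d_{1},c_{1})$ up to scalar. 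Equality of socles would therefore force $c_{1}^{2}=d_{1}^{2}$, which makes $K_{1}(0)$ invariant under the involution and hence $K_{1}(0)=K_{2}(0)$. But then the characteristic polynomials of $\theta|_{K_{1}}$ and $\theta|_{K_{2}}$ — namely $f_{1}$ and $f_{2}$ — would agree, contradicting our standing hypothesis that $f_{1}\neq f_{2}$ (which holds because $\deg f_{i}=2m$ is even with $f_{1}(\lambda)=f_{2}(-\lambda)$ and the factors are distinct irreducible components of the generic $\chi(\theta)$).

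\textbf{Main obstacle.} The delicate point is Step 3: transferring the abstract duality $f_{1}(\lambda)=f_{2}(-\lambda)$ into a concrete $\mathcal{O}$-linear operator on $E$ (not merely on $E\otimes\mathcal{K}$) that anticommutes with $\theta$ and swaps $K_{1}$ with $K_{2}$. The existence of such an operator for generic $\theta$ relies on the symmetry of the Jordan partition $[2m,2m]$ and the identification of $K_{i}$ as free rank-one $\mathcal{O}[\lambda]/(f_{i})$-modules; alternatively, one can bypass this by a direct calculation of $(c_{1}',d_{1}')$ in terms of $(c_{1},d_{1})$ using the skew-symmetry of $\theta$ against $g$ and the explicit relation between $f_{1}$ and $f_{2}$ modulo $t^{2}$. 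Either route, once the leading coefficients are related, concludes the proof via the contradiction sketched above.
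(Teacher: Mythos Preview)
Your Steps 1 and 2 are clean and correct: the socle reduction and the identification of the socle with the leading coefficient $(c_1:d_1)$ of the cyclic generator both work. The problem lies entirely in Step 3, where there are two genuine gaps.

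First, the involution $\sigma$. You need an $\mathcal{O}$-linear operator on $E$ (not just on $E\otimes\mathcal{K}$) anticommuting with $\theta$ and swapping $K_1,K_2$; you acknowledge this. But you also need to know how its reduction acts on the specific Jordan basis $\{w_1,w_2\}$, and the claim $\theta(0)^iw_1\leftrightarrow(-1)^i\theta(0)^iw_2$ is pulled from thin air. There are many operators on $E(0)$ anticommuting with $\theta(0)$, and the Jordan basis was chosen relative to the pairing $g$, not relative to any $\sigma$. Without this identification you cannot compute $(c_1',d_1')$ in terms of $(c_1,d_1)$.

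Second, and more seriously, your final implication is simply false. Suppose you did obtain $(c_1',d_1')=(d_1,c_1)$ and hence $c_1^2=d_1^2$. This says only that the one-dimensional socle is $\sigma$-invariant; it says nothing about the higher coefficients $c_j,d_j$ for $j\geq 2$, so you cannot conclude that $K_1(0)$ is $\sigma$-invariant, and hence you cannot conclude $K_1(0)=K_2(0)$. Even if you could, the further step ``$K_1(0)=K_2(0)\Rightarrow f_1=f_2$'' is also wrong: equality of the reductions $K_i(0)\subset E(0)$ does not force $K_1=K_2$ as $\mathcal{O}$-sublattices of $E$.

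The paper takes a completely different route. It supposes $0\neq v(0)\in K_1(0)\cap K_2(0)$, lifts to $v\in K_1$, $v'\in K_2$ with the same reduction, and expands $f_1(\theta)v=0$, $f_2(\theta)v'=0$ to first order in $t$. Since $f_1(\lambda)=f_2(-\lambda)$, subtracting kills the even-degree coefficients and leaves a relation $\tfrac{a}{t}\big|_{t=0}\theta(0)^{2m-1}v(0)+\cdots+\tfrac{c}{t}\big|_{t=0}\theta(0)v(0)=0$ with generically nonzero coefficients, forcing $\theta(0)v(0)=0$. Thus $K_1(0)\cap K_2(0)\subset\ker\theta(0)\cap K_1(0)$, which is one-dimensional; combined with the parity statement of Lemma~\ref{lem:even intersection} this gives $K_1(0)\cap K_2(0)=0$. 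Note that genericity of $\chi(\theta)$ enters essentially here, whereas your argument never invokes it.
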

\begin{proof}
    For notation ease, we prove for $m=2$ and argue by contradiction. Suppose not, then for $v(0)\in K_1(0)\cap K_2(0)$, we choose two liftings in $K_1,K_2$ as $v=v(0)+t\tilde{v}, v'=v(0)+t\tilde{v'}$. We write:
    \begin{align*}
        f_1(\lambda)=\lambda^4+a\lambda^3+b\lambda^2+c\lambda+d\\
        f_2(\lambda)=\lambda^4-a\lambda^3+b\lambda^2-c\lambda+d
    \end{align*}
    Then we have:
    \begin{align}\label{eq:generators}
        \frac{\theta^{4}v}{t}|_{t=0}=-\frac{a}{t}|_{t=0}\theta(0)^3v(0)-\frac{b}{t}|_{t=0}\theta(0)^2v(0)-\frac{c}{t}|_{t=0}\theta(0)v(0)-\frac{d}{t}|_{t=0}v(0)\\
        \frac{\theta^{4}v'}{t}|_{t=0}=\frac{a}{t}|_{t=0}\theta(0)^3v'(0)-\frac{b}{t}|_{t=0}\theta(0)^2v'(0)+\frac{c}{t}|_{t=0}\theta(0)v'(0)-\frac{d}{t}|_{t=0}v'(0)
    \end{align}
    We may put $\theta=\theta(0)+t\tilde{\theta}$. Notice that:
    \[
    \theta^4 v=t((\theta(0)^{3}\tilde{\theta}|_{t=0}+\theta(0)^{2}\tilde{\theta}|_{t=0}\theta(0)+\cdots)v(0))+\cdots
    \] 
    Hence We have
    \begin{align*}
    &\frac{\theta^{4}v}{t}|_{t=0}-\frac{\theta^{4}v'}{t}|_{t=0}\\
    =&(\theta(0)^{3}\tilde{\theta}|_{t=0}+\theta(0)^{2}\tilde{\theta}|_{t=0}\theta(0)+\cdots)(v(0)-v(0))\\   
    =&0
    \end{align*}
    
    Now the difference of RHS of Equations \eqref{eq:generators} is:
    \[
    2\big(\frac{a}{t}|_{t=0}\theta(0)^3v(0)+\frac{c}{t}|_{t=0}\theta(0)v(0)\big)
    \] 
    which should be 0 then. By the genericity of $f_1,f_2$, $\theta(0)v(0)=0$. Hence $K_1(0)\cap K_2(0)\subset K_1(0)\cap \ker\theta(0)$. Notice that $\dim K_1(0)\cap \ker\theta(0)=1$, combined with Lemma \ref{lem:even intersection}, $K_1(0)\cap K_2(0)$ has to be 0.
\end{proof}

\begin{proof}[Proof of Theorem \ref{Thm. decomposition}]
    For $E\in \bf{Gr}_{\theta, \bf{O}}$, since $\chi(\theta)$ is generic, then by Proposition \ref{prop:direct summand even dual}, we have the following $\theta$ invariant decomposition $$E\cong \big(\bigoplus_{d_j\in \bf{T}_i \text{\ which\ is\ of\ D1*} }K_j\big)\bigoplus E_{12}.$$

    We have an induced non-degenerate symmetric pairing $g_{12}$ and an induced Higgs field $\theta_{12}$ on $E_{12}$ so that $g_{12}(\theta_{12} -,-)+g_{12}(-,\theta_{12} -)=0$. Notice that $\chi(\theta_{12})$ is also generic and by Proposition \ref{prop:deg}, for irreducible factors $f_{j}(\lambda)$ and $f_{j^\prime}(\lambda)$ of $\chi(\theta_{12})$, if $f_j(\lambda)=\pm f_{j^\prime}(-\lambda)$, then $\deg f_j(\lambda)=f_{j^\prime}(\lambda)$ is odd. Moreover, the partition of $\theta_{12}(0)$ contains no blocks of type D1*.

    This case has been considered in \cite{WWW24}, here we recall the idea of the proof.

    Firstly, by \cite[Theorem 2.8]{WWW24}, we have a $\theta$ invariant decomposition $$E_{12}\cong E_1 \bigoplus E_2$$ where $E_{1}$ is the direct sum of $K_j$ such that $d_j\in \bf{T}_i$ which is of type D1. Then we only need to consider $E_2$.

    Secondly, by  \cite[Corollary 2.41]{WWW24}, we have the following $\theta$ invariant decomposition $$E_{2}\cong \bigoplus_{ \bf{T}_j \text{ of type D2} } E_{\bf{T}_i}$$ here we define $T_i(\lambda)=\prod_{d_j\in \bf{T}_i}f_{j}(\lambda)$, and then define $E_{\bf{T}_i}$ to be $\Ker T_{i}(\theta)$. Thus we reduce to the case to consider $E_{\bf{T}_i}$ for each $\bf{T}_i$ of type D2, which is assumed to be $\bf{T}_i=[\alpha_1, \beta_1, \beta_1, \cdots \beta_k, \beta_k, \alpha_2]$.

    Finally, we consider the following exact sequence: $$0\longrightarrow \bigoplus_{d_j\in \bf{T}_i}K_j\longrightarrow E_{\bf{T}_i}\longrightarrow W_i\longrightarrow 0$$ where $W_i$ is a vector space over $\mathbb{C}$ of dimension $k+1=\#\bf{T}_i/2$, since by \cite[Proposition 2.29]{WWW24}, the restriction of the pairing $g$ on this $K_j$ is ``$1$-degenerate", which means that the induced morphism $K_j\rightarrow K_j^\vee$ makes $K_j$ to be a subsheaf of $K_j^\vee$ of codimension $1$.

    In summary, by sending $E$ to $\oplus_{d_j\in \bf{d}}K_j$, we defined our morphism in the theorem.

    To see the degree of this morphism, we start with a module $\oplus_{d_j\in \bf{d}}K_j$ as above and then try to find how many $E$ we can construct from such module. By the construction above, we only need to see how to construct a $E_{\bf{T}_i}$ from $\oplus_{d_j\in \bf{T}_i}K_j$ for a $\bf{T}_i$ of type D2. 

    Now this is characterized as follows: we have $$0\longrightarrow \bigoplus_{d_j\in \bf{T}_i}K_j\longrightarrow \bigoplus_{d_j\in \bf{T}_i}K_j^\vee \longrightarrow Q_i\longrightarrow 0$$ where $Q_i$ is a vector space of dimension $2k+2=\#\bf{T}_i$ over $\mathbb{C}$ since the restriction of the pairing $g$ on each $K_j$ is ``$1$-degenerate". Notice that there is an induced nondegenerate pairing on $Q_i$.

    To reconstruct a $E_{\bf{T}_i}$, it is suffice to find a particular kind of subspaces $W$ of $Q_i$, since $E_{\bf{T}_i}$ should contain $\oplus_{d_j\in \bf{T}_i}K_j$ and should be a submodule of $\oplus_{d_j\in \bf{T}_i}K_j^\vee$. Since we require that there is a non-degenerate pairing on $E_{\bf{T}_i}$, such $W$ must be maximal isotropic under the induced pairing on $Q_i$. The other constraint on $W$ is that the morphism $\theta_{\bf{T}_i}: E_{\bf{T}_i}\rightarrow E_{\bf{T}_i}$ must have partition $\bf{T}_i$ when $t=0$. We call such $W$ to be a $\iota$-isotropic subspace of $Q_i$. In \cite[Proposition 2.43]{WWW24}, we show that there are exactly $2^{\#\bf{T}_i-1}$ many $\iota$-isotropic subspaces of $Q_i$ and describe each $\iota$-isotropic subspace explicitly.

    Thus the degree of the morphism we constructed is $2^{\beta(\bf{d})-c(\bf{d})}$.
\end{proof}

Although the following corollary is already indicated by \cite[Corollary 2.41]{WWW24}, we state here for later use.

\begin{corollary}\label{coro:decomposition over general field}
    Consider $E\in \bf{Gr}_{\theta, \bf{O}}$ as in the Theorem \ref{Thm. decomposition} above. As before, we write the partition of $\bf{O}$ by $\bf{d}=[d_1, \cdots , d_r]=[\bf{T}_1, \cdots , \bf{T}_l]$. We have a decomposition $$E\cong \bigoplus_{i=1}^l\Ker T_{i}(\theta).$$ Each direct summand has an induced nondegenerate pairing. Moreover, if $T_{i}(\lambda)$ is a factor of $\chi(\theta)$ over a general field, then $\Ker T_{i}(\theta)$ is also a $\theta$ direct summand of $E$ over this general field.
\end{corollary}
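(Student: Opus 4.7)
The plan is to assemble the decomposition directly from the three reduction steps already carried out in the proof of Theorem \ref{Thm. decomposition}. First, Proposition \ref{prop:direct summand even dual} together with the D1* analysis yields
\[
E \cong \Big( \bigoplus_{\mathbf{T}_i \text{ of type D1*}} \Ker T_i(\theta) \Big) \oplus E_{12},
\]
where each summand is $\theta$-stable and mutually orthogonal under $g$. Next, \cite[Theorem 2.8]{WWW24} splits $E_{12}$ into its D1 part, which is itself $\bigoplus_{\mathbf{T}_i \text{ of D1}} \Ker T_i(\theta)$, and its D2 part; finally \cite[Corollary 2.41]{WWW24} further decomposes the D2 part as $\bigoplus_{\mathbf{T}_i \text{ of D2}} \Ker T_i(\theta)$. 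Concatenating the three steps produces the required decomposition $E \cong \bigoplus_{i=1}^l \Ker T_i(\theta)$ as $\theta$-direct summands.

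For the induced pairing, I would observe that the involution $\lambda \mapsto -\lambda$ permutes the irreducible factors of $\chi(\theta)$ within each block $\mathbf{T}_i$; this is a direct case check using the type D1, D1*, D2 classification together with Proposition \ref{prop:deg}. Consequently $\Ker T_i(\theta)$ and $\Ker T_{i'}(\theta)$ are orthogonal for $i \neq i'$, and since $g$ is perfect on $E$, it restricts to a nondegenerate pairing on each $\Ker T_i(\theta)$.

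The only new ingredient beyond \cite[Corollary 2.41]{WWW24} is the passage to a general field $\Bbbk$ with $\mathbb{C} \subset \Bbbk \subset \overline{\Bbbk}$. Here I would invoke Lemma \ref{lem:factorization over non-closed field}, which shows that the product of those irreducible factors of $\chi(\theta)$ sharing a common degree already has coefficients in $\Bbbk[\![t]\!]$. Since by Proposition \ref{prop:deg} each block $\mathbf{T}_i$ corresponds to factors of prescribed degrees, whenever $T_i(\lambda)$ appears as a factor of $\chi(\theta)$ over $\Bbbk$ it automatically lies in $\Bbbk[\![t]\!][\lambda]$; thus $\Ker T_i(\theta) \subset E$ is defined over $\Bbbk$. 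The $\theta$-direct summand decomposition obtained over $\overline{\Bbbk}$ is then Galois-equivariant because the Galois action preserves each $T_i$, and hence descends to $\Bbbk$.

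The step I expect to require the most care is the Galois descent of the splitting $s \colon E \to \Ker T_i(\theta)$. This reduces to showing that the projector onto $\Ker T_i(\theta)$ constructed over $\overline{\Bbbk}$ is $\Gal(\overline{\Bbbk}/\Bbbk)$-invariant, which follows from the uniqueness of the decomposition dictated by the pairwise coprimality of the $T_i$'s, itself a consequence of the genericity of $\chi(\theta)$ in $L\mathfrak{c}_{\mathbf{O}}$.
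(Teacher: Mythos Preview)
Your proposal is correct and, in fact, provides considerably more detail than the paper itself, which gives no proof at all: the corollary is simply prefaced by the remark that it ``is already indicated by \cite[Corollary 2.41]{WWW24}.'' Your assembly of the decomposition from the three steps in the proof of Theorem~\ref{Thm. decomposition} (D1* via Proposition~\ref{prop:direct summand even dual}, D1 via \cite[Theorem 2.8]{WWW24}, D2 via \cite[Corollary 2.41]{WWW24}) is exactly right, as is your orthogonality argument for the induced pairing.

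One small point worth tightening: your invocation of Lemma~\ref{lem:factorization over non-closed field} is superfluous here. The corollary is \emph{conditional}---it only asserts that if $T_i(\lambda)$ happens to be a factor of $\chi(\theta)$ over $\Bbbk$, then the direct-summand property descends. You do not need to argue that $T_i$ is automatically defined over $\Bbbk$ (indeed it need not be, e.g.\ when two distinct blocks have factors of the same degree). Once $T_i \in \Bbbk[\![t]\!][\lambda]$ is assumed, your descent argument goes through cleanly: both $\Ker T_i(\theta)$ and $\Ker(\chi(\theta)/T_i)(\theta)$ are $\Bbbk$-submodules, their direct sum maps to $E$ by a $\Bbbk$-linear map that is an isomorphism after base change to $\overline{\Bbbk}$, hence already over $\Bbbk$. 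This is equivalent to, and perhaps slightly more direct than, your Galois-invariance-of-the-projector formulation.
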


\section{Residually Nilpotent Hitchin Systems}\label{sec:parHit_Nil}

Consider a nilpotent element $e \in \mathfrak{g} = \mathrm{Lie}(G)$. Jacobson--Morozov theorem ensures the existence a triple $\{ e, h, f \} \cong \mathfrak{sl}_2$. The adjoint action of $h$ induces a grading $\mathfrak{g} = \bigoplus_{i \in \mathbb{Z}} \mathfrak{g}_i.$
Define the parabolic subalgebra $\mathfrak{p}_{JM} := \bigoplus_{i \geq 0} \mathfrak{g}_i$ and the subspace $\mathfrak{n}_2 := \bigoplus_{i \geq 2} \mathfrak{g}_i$. Let $P_{JM} < G$ be the parabolic subgroup with Lie algebra $\mathfrak{p}_{JM}$. The map
\[
    G \times_{P_{JM}} \mathfrak{n}_2 \longrightarrow \Obar_e \subset \mathfrak{so}_{2n}
\]
is known as the \emph{Jacobson--Morozov resolution}.

Using this resolution, we define the following moduli space: 
\[
\mathbf{Higgs}_{\Obar} = \left\{ (E, \theta, E_{P_{JM}}) \ \left| \
\begin{aligned}
    & E \ \text{is a principal } G\text{-bundle}, \\
    & \theta \in H^0(\Sigma, \Ad(E) \otimes \omega_{\Sigma}(x)), \\
    & E_{P_{JM}} \ \text{is a } P_{JM}\text{-reduction of } E \text{ at } x, \\
    & \Res_x \theta \in E_{P_{JM}} \times_{P_{JM}, \Ad} \mathfrak{n}_2
\end{aligned} \right. \right\}.
\]
Equivalently, $\mathbf{Higgs}_{\overline{\mathbf{O}}}$ parameterizes tuples $(E, g, \theta, \alpha_E, \Fil^{\bullet}_{P_{JM}})$, where:
\begin{itemize}
    \item $E$ is a vector bundle of rank $2n$,
    \item $g: E \otimes E \rightarrow \mathcal{O}_{\Sigma}$ is a symmetric nondegenerate pairing,
    \item $\alpha_E: \bigwedge^{2n} E \xrightarrow{\sim} \mathcal{O}_\Sigma$ is a framing,
    \item $\theta \in \operatorname{H}^0(\Sigma, \Ad(E) \otimes \omega_{\Sigma}(x))$ is the Higgs field,
    \item $\Fil^{\bullet}_{P_{JM}}$ is a filtration of the fiber $E|_x$ determined by $P_{JM}$,
\end{itemize}
such that the residue $\Res_x \theta$ preserves the filtration $\Fil^{\bullet}_{P_{JM}}$ and lies in the image of $\mathfrak{n}_2$ under the adjoint action. See \cite[\S 4.2]{WWW24} for more details.

In the following, when no confusion arises, we use the notation $(E, \theta)$ to refer to either a residually nilpotent Higgs bundle or an ordinary Higgs bundle.
 
\begin{remark}\
\begin{itemize}
    \item The moduli space $\bf{Higgs_{\Obar}}$ has two connected components, denoted by $\mathbf{Higgs}_{\Obar}^+$ and $\mathbf{Higgs}_{\Obar}^-$. We use $\mathbf{Higgs}_{\Obar}^\pm$ to denote either connected component.
    \item The moduli space $\mathbf{Higgs}_{\Obar}^\pm$ corresponds to the orbit itself rather than merely the associated partition. In particular, this means that, for very even orbit $\bf{O}=\bf{O}^I \bigsqcup \bf{O}^{II}$, $\mathbf{Higgs}_{\overline{\mathbf{O}}^I}^\pm$ and $\mathbf{Higgs}_{\overline{\mathbf{O}}^{II}}^\pm$ are distinct.
\end{itemize}
\end{remark}

One of the most important methods for studying the Higgs moduli space is via the Hitchin map, whose image is called the \emph{Hitchin base}. In types A, B, and C, the Hitchin base is typically given by the collection of all coefficients of the characteristic polynomial of the Higgs field.

However, in type D, this description is insufficient—the Hitchin base must include additional functions beyond the characteristic polynomial coefficients. This phenomenon, first observed by Tachikawa~\cite{Tac}, motivates the following definition.

\begin{definition}\label{Coulomb Hitchin base}
   We define the \emph{Coulomb Hitchin base} as the affinization:
    \[
        \bf{A}_{\Obar}:= \mathrm{Spec}\left(\CC\left[\mathbf{Higgs}_{\Obar}^\pm\right] \right).
    \] 
\end{definition}

Let $\mathbf{O} \subset \mathfrak{so}_{2n}$ be a nilpotent orbit with partition\footnote{In the very even case, the partition $\mathbf{d}$ corresponds to either the orbit $\mathbf{O}^{I}$ or $\mathbf{O}^{II}$. In both cases, we still use $\mathbf{d}$ to define $\eta_{2i}$.} 
\[ 
    \bf{d}=[d_1, \cdots ,d_r]=[\bf{T}_1, \cdots , \bf{T}_l].
\]
We first define the sequence $\eta = \{ \eta_{2i} \}_{i=1}^n$ by
\begin{align} \label{delta_sing}
    \eta_{2i}=\min \left\{ j \mid \sum_{k=1}^j d_{k}\geq 2i\right\}.
\end{align}
Note that $\eta_{2n}=r$ is even. 

Then, similar to \cite[Lemma 4.4]{WWW24}, we have the Hitchin map 
\[
    h_{\Obar}: \bf{Higgs}_{\Obar}^{\pm} \rightarrow \bf{H}
\]
which takes the coefficients of the characteristic polynomial with the last term given by the Pfaffian as an exception:
\[
    \bf{H}=\bigoplus_{i=1}^{n-1} \operatorname{H}^0\left(\Sigma, \omega_{\Sigma}^{2i} \otimes \mathcal{O}_{\Sigma}( (2i - \eta_{2i})x) \right)\bigoplus \operatorname{H}^0\left(\Sigma, \omega_{\Sigma}^{n} \otimes\mathcal{O}_{\Sigma}( (n - \tfrac{\eta_{2n}}{2})x )\right).
\]

Here we recall the construction of the Pfaffian. $(E, \theta)$ is a $\operatorname{SO_{2n}}$ Higgs bundle, which mean we not only have a nondegenerate pairing $g: E\otimes E\rightarrow \sO_{\Sigma}$, but also a framing $\alpha_{E}: \bigwedge^{2n}E\xrightarrow{\sim} \sO_{\Sigma}$. Notice that $g\circ\theta$ can be regarded as a global section of $\operatorname{H}^{0}(\Sigma, (\bigwedge^2E^\vee)\otimes \omega_{\Sigma}(x))$ then the $n$-th wedge of $g\circ \theta$ is a global section of $\operatorname{H}^{0}(\Sigma, (\bigwedge^{2n}E^\vee)\otimes \omega_{\Sigma}(x)^{\otimes n})$. Thus $(\alpha_E^\vee)^{-1}\circ \bigwedge^{n}(g\circ \theta)$ is a section in $\operatorname{H}^{0}(\Sigma, \omega_{\Sigma}(x)^{\otimes n})$ and this is the pfaffian of $\theta$.

The space of coefficients of the characteristic polynomial, denoted by $\mathbf{H}_{\Obar}$, is commonly referred to as \emph{Hitchin base}. In types A and C for all nilpotent orbits, and in type B for special nilpotent orbits, the space $\mathbf{H}_{\Obar}$, constructed analogously to $\mathbf{H}$ (excluding the Pfaffian), are affine spaces and we have
\[
    \mathbf{A}_{\Obar} = \mathbf{H}_{\Obar}.
\]

However, in type D, it may happen that $\mathbf{H}_{\Obar}$ is a proper subvariety of $\mathbf{H}$ and $\mathbf{A}_{\Obar} \neq \mathbf{H}_{\Obar}$. 

In the following, we will first describe generic fibers. In particular, we show that generic fibers are torsors of self-dual abelian varieties if and only if $\bf{O}$ is special. We then make use of the description of generic fibers to show that  $\mathbf{A}_{\Obar}$ is the normalization of $\mathbf{H}_{\Obar}$.
\subsection{Generic Fibers of $h_{\Obar}$}

In this subsection, we give a concrete description of the Hitchin fiber  $h_{\Obar}^{-1}(a)$ for generic $a \in \mathbf{H}_{\Obar}$ making use of spectral curves and the parabolic Beauville–Narasimhan–Ramanan (BNR) correspondence.

Consider $\pi : \Tot(\omega_{\Sigma}(x)) \rightarrow \Sigma$,
 where $\Tot(\omega_{\Sigma}(x))$ denotes the total space of the line bundle $\omega_{\Sigma}(x)$. Let
\[
    \lambda \in \operatorname{H}^0(\Tot(\omega_{\Sigma}(x)), \pi^* \omega_{\Sigma}(x))
\]
be the tautological section.

\begin{definition}
    Given $a = (a_2, a_4, \ldots, a_{2n-2}, p_n) \in \mathbf{H}_{\Obar}$, we define the \emph{spectral curve} $\Sigma_a$ to be the zero locus of the section
    \[
        \lambda^{2n} + a_2 \lambda^{2n-2} + \cdots + p_n^2 \in \operatorname{H}^0(\Tot(\omega_{\Sigma}(x)), \pi^* \omega_{\Sigma}^{2n}(2nx)).
    \]
    Denote by $\pi_a : \Sigma_a \rightarrow \Sigma$ the projection, and let $\sigma$ be the involution on $\Sigma_a$ induced by $\lambda \mapsto -\lambda$.
\end{definition}

The spectral curve $\Sigma_a$ is singular at the points where $p_n$ vanishes. We now define an open subset of $\mathbf{H}_{\Obar}$ that parametrizes generic spectral data.

\begin{definition} \label{Def:H^KL}
    Let $\mathbf{O}$ be a nilpotent orbit of type D. The open subset $\mathbf{H}^{\mathrm{KL}} \subset \mathbf{H}_{\Obar}$ consists of those $a \in \mathbf{H}_{\Obar}$ such that:
    \begin{itemize}
        \item[(a)] The section $p_n$ has only simple zeros away from the marked point $x$;
        \item[(b)] Near the marked point $x$, the local equation of $\Sigma_a$ is generic in $L\mathfrak{c}_{\bf{O}}$, i.e., it satisfies the genericity condition in Theorem \ref{Thm. decomposition}.
    \end{itemize}
    We also define $\mathbf{A}^{\mathrm{KL}}$ to be the inverse image of $\mathbf{H}^{\mathrm{KL}}$ under the map $\mathbf{A}_{\Obar} \rightarrow \mathbf{H}_{\Obar}$.
\end{definition}
\begin{remark}
     Later by showing the connectedness of generic Hitchin fibers, we show that the map $\mathbf{A}_{\Obar} \rightarrow \mathbf{H}_{\Obar}$ is generically one-to-one. Then we can identify $\mathbf{A}^{\mathrm{KL}}$ to $\mathbf{H}^{\mathrm{KL}}$.
\end{remark}
Let $\bar{\Sigma}_a$ denote the normalization of $\Sigma_a$, which inherits the involution $\sigma$. We denote the preimage of $x$ in $\bar{\Sigma}$ by $x_1, \cdots , x_r$, where $x_i$ corresponds to $e_i$ in the partition $\bf{deg}$. Denote the quotient curve by $\bar{\Sigma}_a/\sigma$ and we also use $\sR_a$ to denote the fixed points of $\sigma$, which we may also treat as a divisor. Thus $\sR_a=\sum x_j$ so that  $d_j\in \bf{T}_i$ of type D2.  Then we let 
\[
    \bar{\pi}_a: \bar{\Sigma}_a \rightarrow \Sigma
\]
be the natural map. We define the associated \emph{Prym variety} as the neutral component of
\[
    \Prym_a := \Prym(\bar{\Sigma}_a, \bar{\Sigma}_a/\sigma),
\]
and define
    \[
    \Prym(\sR_a):=\{\sL\in\Pic(\bar{\Sigma}_a) \mid \sigma^*\sL\cong\sL^{\vee}(\sR_a)\}.
    \]

Finally, we denote the fibers of $h_{\overline{\bf{O}}}^{\pm}$ over $a \in \mathbf{H}^{\mathrm{KL}}$ by
\[
    F_a^{\pm} := (h_{\Obar}^{\pm})^{-1}(a).
\]

\begin{proposition}\label{prop:the map l}
    We have a natural morphism $L_D: F_a^\pm\rightarrow \Prym(\sR_a)$.
\end{proposition}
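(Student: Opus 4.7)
The plan is to apply the parabolic Beauville--Narasimhan--Ramanan correspondence to a residually nilpotent Higgs bundle $(E,\theta,\Fil^\bullet_{P_{JM}})\in F_a^\pm$, and then upgrade the resulting sheaf on $\Sigma_a$ to a genuine line bundle on the normalization $\bar\Sigma_a$ satisfying the Prym condition. Away from the marked point $x$, the standard BNR construction produces a torsion--free $\sO_{\Sigma_a}$--module $\sM$ such that $(\pi_a)_*\sM\cong E|_{\Sigma\setminus\{x\}}$ with $\lambda$ acting as $\theta$; restricting to the smooth locus of $\Sigma_a\setminus\pi_a^{-1}(x)$ already yields a line bundle $\sL_{\mathrm{gen}}$ on $\bar\Sigma_a\setminus\bar\pi_a^{-1}(x)$.

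Near $x$, the genericity condition of Definition \ref{Def:H^KL} puts us in the setting of Theorem \ref{Thm. decomposition}: the local Higgs lattice $\widehat{E}_x$ lies in $\bf{Gr}_{\theta,\Obar}$, and the theorem associates to it a rank--one free $\bar{\sO}_f$--module $\sL_{\mathrm{loc}}$. Under the identification $\Spec\bar{\sO}_f=\bar\pi_a^{-1}(\Spec\sO_x)$, each irreducible factor $f_i$ corresponds to a branch $x_i\in\bar\pi_a^{-1}(x)$, and the stalk of $\sL_{\mathrm{loc}}$ at $x_i$ supplies the extension of $\sL_{\mathrm{gen}}$ across $\bar\pi_a^{-1}(x)$. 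Gluing $\sL_{\mathrm{gen}}$ with $\sL_{\mathrm{loc}}$ yields a line bundle $\sL=L_D(E,\theta)$ on $\bar\Sigma_a$, and compatibility with Corollary \ref{coro:decomposition over general field} shows this assignment is functorial in families, hence defines a morphism of schemes $L_D:F_a^\pm\to\Pic(\bar\Sigma_a)$.

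To verify that $L_D$ lands in $\Prym(\sR_a)$, one uses the symmetric pairing $g:E\otimes E\to\sO_\Sigma$ and the skew--adjointness $g(\theta-,-)+g(-,\theta-)=0$. Over $\Sigma\setminus\{x\}$, these induce an isomorphism $\sL_{\mathrm{gen}}\xrightarrow{\sim}\sigma^*\sL_{\mathrm{gen}}^{\vee}$ on $\bar\Sigma_a\setminus\bar\pi_a^{-1}(x)$, because $\sigma$ exchanges dual eigenlines of $\theta$. Near $x$, the local Prym condition $\sigma^*\sL_{\mathrm{loc}}\cong\sL_{\mathrm{loc}}^{\vee}\otimes\sO(\sR)$ is supplied directly by Theorem \ref{Thm. decomposition}, where $\sR$ collects precisely the branches $x_j$ sitting inside type D2 blocks. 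Matching this $\sR$ with the global fixed divisor $\sR_a=\sum_j x_j$ attached to D2 blocks, we obtain $\sigma^*\sL\cong\sL^{\vee}(\sR_a)$ globally.

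The main obstacle is identifying the twist $\sR_a$ correctly: generically the pairing furnishes an isomorphism $\sL\cong\sigma^*\sL^{\vee}$ without twist, but at fixed points of $\sigma$ coming from D2 blocks the restriction of $g$ to the corresponding local summand $K_j$ is $1$--degenerate (this was built into the proof of Theorem \ref{Thm. decomposition} via the inclusion $K_j\hookrightarrow K_j^{\vee}$ of codimension one). This degeneracy is exactly what forces a twist by a single point at each such branch, and one must check both that no spurious twist arises at non--D2 branches and that the construction is insensitive to the choice of local trivialization of $\bigwedge^{2n}E$. Once these compatibilities are settled, functoriality of the BNR construction in families yields the required morphism $L_D:F_a^\pm\to\Prym(\sR_a)$.
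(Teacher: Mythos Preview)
Your approach parallels the paper's: build $\sL$ by gluing a local piece near $x$ (via Theorem~\ref{Thm. decomposition}) with a global piece away from $x$, then verify the Prym condition. Two differences are worth flagging.

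First, the paper makes the intermediate object explicit: it passes to the $\theta$-invariant subsheaf $E'\subset E$ (the direct sum of the kernels $K_j$ from Theorem~\ref{Thm. decomposition}) and shows $E'\cong\bar\pi_{a*}\sL$ both near $x$ and over $\Sigma\setminus\{x\}$, the latter by citing Hitchin's classical argument for $\SO$-spectral covers. Your sentence ``restricting to the smooth locus of $\Sigma_a\setminus\pi_a^{-1}(x)$ already yields a line bundle $\sL_{\mathrm{gen}}$ on $\bar\Sigma_a\setminus\bar\pi_a^{-1}(x)$'' skips a genuine step: the smooth locus misses the nodes sitting over the simple zeros of $p_n$, and at each such node one must argue that the BNR sheaf $\sM$ is the pushforward of a line bundle from the normalization rather than an invertible sheaf on the nodal curve. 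This is precisely what the orthogonal structure forces and what the paper's reference to Hitchin covers; without it your $\sL_{\mathrm{gen}}$ is only defined on a proper open of $\bar\Sigma_a\setminus\bar\pi_a^{-1}(x)$.

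Second, for the Prym condition the paper invokes Grothendieck--Serre duality for $\bar\pi_a$ to obtain $(\bar\pi_{a*}\sL)^\vee\cong\bar\pi_{a*}(\sL^\vee\otimes\sO_{\bar\Sigma_a}(\sR_a))$ and reads off $\sigma^*\sL\cong\sL^\vee(\sR_a)$ in one stroke from the orthogonal structure on $E'$, whereas you patch the local Prym condition supplied by Theorem~\ref{Thm. decomposition} with the generic isomorphism coming from $g$. Both routes are valid; the duality argument is shorter and sidesteps the branch-by-branch bookkeeping, while yours makes the origin of the twist $\sR_a$ at the $1$-degenerate D2 summands more transparent.
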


\begin{proof}
    Consider $(E, \theta)\in F_a^\pm$, then we by Theorem \ref{Thm. decomposition}, we have a natural $\theta$-invariant subsheaf $E^\prime$ of $E$, so that the restriction of $\theta$ on the fiber $E^\prime|_{x}$ has partition $\bf{deg}$. Now we cover the base curve $\Sigma$ with a formal neighborhood around $x$ and $\Sigma \setminus \{x\}$. On one hand, by Theorem \ref{Thm. decomposition}, we see that locally around $x$, $E^\prime$ is isomorphic to a pushforward of a line bundle on the normalized spectral curve $\bar{\Sigma}_a$; on the other hand, over $\Sigma \setminus \{x\}$, by a similar argument as \cite[5.14]{Hit87S}, $E^\prime$ is also isomorphic to the push forward of a line bundle on $\bar{\Sigma}_a$. Glue all things together, we see that we have a line bundle $\sL$ on $\bar{\Sigma}_a$, so that $E^\prime\cong \bar{\pi}_{a*}\sL$.

     By the Grothendieck--Serre duality, we have
    \[
    (\bar{\pi}_{a*}(\sL))^{\vee}\cong \bar{\pi}_{a*}(\sL^{\vee} \otimes \sO_{\bar{\Sigma}_a}(\sR_a)).
    \]
    Since $\bar{\pi}_{a*}\sL$ is an orthogonal bundle we have
    \[
    \sigma^{*}\sL\cong\sL^{\vee}\otimes\sO_{\bar{\Sigma}_a}(\sR_a).
    \]
    Hence we have the desired morphism.
\end{proof}

We choose $x_0$ as the base point of the Abel-Jacobian map: 
 \[
     \bar{\Sigma}_a\rightarrow \Jac(\bar{\Sigma}_a),\quad x\mapsto \mathcal{O}_{\bar{\Sigma}_a}(x-x_0).
 \]
 Let $\mathcal{P}$ denote the Poincar\'e line bundle on $\bar{\Sigma}_a\times \Jac(\bar{\Sigma}_a)$. Restricting $\mathcal{P}$ to $\bar{\Sigma}_a\times \Prym_a$, which we still denote by $\mathcal{P}$, retains the symmetry induced by the involution $\sigma$, satisfying $\sigma^*\mathcal{P}\cong \mathcal{P}^\vee$. 
 
 For a point $x_i$, $1\leq i \leq r$, let $\mathcal{P}_{x_i}$ denote the restriction of $\sP$ to $\{x_i\}\times \Prym_a\cong\Prym_a$, in particular, $\sP_{x_0}$ is trivial. Moreover, $\sigma^*\sP_{x_i}\cong \sP_{x_j}$, if $\sigma(x_i)=x_j$. equivalently $\sP_{x_j} \cong \sP_{x_i}^{\vee}$, and if $x_i$ is fixed by $\sigma$, we have $\sP_{x_i} \in \Prym^\vee[2]$. Similarly, we also have such line bundles on $\Prym(\sR_a)$, which we also denote as $\sP_{x_i}$.

 \begin{lemma}
     If $\sigma(x_i)=x_j$, then we have a nondegenerate bilinear pairing on $\sP_{x_j} \oplus \sP_{x_i}$ and $\operatorname{OG}(1, \sP_{x_j} \oplus \sP_{x_i})$ is a trivial degree $2$ cover of $\Prym_a$.
 \end{lemma}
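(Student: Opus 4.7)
The plan is to use the identification $\sP_{x_j}\cong\sP_{x_i}^{\vee}$ already recorded in the paragraph above to build a canonical hyperbolic pairing on $\sP_{x_j}\oplus\sP_{x_i}$, and then to trivialize the $\operatorname{OG}(1,-)$ cover by exhibiting the two obvious isotropic direct summands as disjoint sections.

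First I would unpack the isomorphism: since $\sigma(x_i)=x_j$, restricting $(\sigma\times\id)^{*}\sP\cong\sP^{\vee}$ on $\bar{\Sigma}_a\times\Prym_a$ to $\{x_i\}\times\Prym_a$ yields $\sP_{x_j}\cong\sP_{x_i}^{\vee}$. Using this, I would define
\[
g:(\sP_{x_j}\oplus\sP_{x_i})\otimes(\sP_{x_j}\oplus\sP_{x_i})\longrightarrow \sO_{\Prym_a}
\]
as the standard hyperbolic form: the off-diagonal blocks are the canonical evaluation pairing $\sP_{x_i}^{\vee}\otimes\sP_{x_i}\to\sO_{\Prym_a}$ (and its transpose under the isomorphism $\sP_{x_j}\cong\sP_{x_i}^{\vee}$), while the diagonal blocks vanish. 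Symmetry is built in, and nondegeneracy is immediate since evaluation is a perfect duality.

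For the second assertion, I would note that the two inclusions $\sP_{x_j}\hookrightarrow\sP_{x_j}\oplus\sP_{x_i}$ and $\sP_{x_i}\hookrightarrow\sP_{x_j}\oplus\sP_{x_i}$ as direct summands are isotropic with respect to $g$ (the diagonal blocks are zero), and they are disjoint as sub-line bundles of the rank-$2$ bundle. Pointwise over $\Prym_a$ the orthogonal Grassmannian of isotropic lines in a $2$-dimensional split quadratic space consists of exactly two points, the two rulings of the smooth conic in $\PP^{1}$. Hence $\operatorname{OG}(1,\sP_{x_j}\oplus\sP_{x_i})\to\Prym_a$ is finite étale of degree $2$, and the two sections above exhaust its fibers, so the cover is isomorphic to $\Prym_a\sqcup\Prym_a$.

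There is no serious obstacle; once $\sP_{x_j}\cong\sP_{x_i}^{\vee}$ is in hand the argument is formal. The only mild point is checking that the two isotropic sub-line bundles are globally disjoint rather than merely pointwise, which is automatic because they are distinct direct summands, so their intersection is the zero subsheaf.
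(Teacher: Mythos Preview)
Your argument is correct and follows essentially the same approach as the paper: both use the isomorphism $\sP_{x_j}\cong\sP_{x_i}^{\vee}$ to equip $\sP_{x_j}\oplus\sP_{x_i}$ with the hyperbolic pairing, and both identify the two isotropic lines with the two direct summands (the paper phrases this as the $0$ and $\infty$ sections of $\mathbb{P}(\sP_{x_j}\oplus\sP_{x_i})$). Your write-up is simply more detailed than the paper's two-line version.
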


 \begin{proof}
     The nondegenerate bilinear pairing is given by the fact $\sP_{x_j} \cong \sP_{x_i}^{\vee}$. We consider $\operatorname{OG}(1, \sP_{x_j} \oplus \sP_{x_i})$ as a subvariety of $\mathbb{P}(\sP_{x_j} \oplus \sP_{x_i})$, then $\operatorname{OG}(1, \sP_{x_j} \oplus \sP_{x_i})$ is exactly the disjoint union of $0$ section and $\infty$ section hence disconnected.
 \end{proof}

 Now we want to give a concrete description of $F_a^{\pm}$ by stduying the fiber of $L_D$. That is, given an $\sL\in \Prym(\sR_a)$, we want to reconstruct residually nilpotent Higgs bundles in $F_a^{\pm}$.

 Firstly we set $E^\prime=\pi_{a*}\sL$, as we can see in the proof of Proposition \ref{prop:the map l}, the residually nilpotent Higgs bundle we want to construct should contains $E^\prime$ and should be a subsheaf of $(E^\prime)^\vee$. Consider $$0\longrightarrow E^\prime \longrightarrow (E^\prime)^\vee \longrightarrow Q\longrightarrow 0,$$ here $Q$ is a vector space of dimension $\beta(\bf{d})$ supporting at the marked point $x$ and it is $\oplus_jQ_j$ for all the $j$ such that $d_j\in\bf{T}_i$ of type D2. Thus the reconstruction of $E$ is equivalent to the local case as in the proof of Theorem~\ref{Thm. decomposition}.

 So the fiber of $
 L_D$ over $\sL$ is characterized by the sets of $\iota$-isotropic subspaces of $Q_i$ for each $\bf{T}_i$ of type D2, which gives a subvariety denoted by $\Prym_{W_i}(\sR_a)$, of an orthogonal Grassmanian bundle of $\Prym(\sR_a)$. In a similar way, we can construct a subvariety of an orthogonal Grassmanian bundle of $\Prym_a$, which we denote as $\Prym_{W_i, a}$.

 \begin{proposition}(\cite[Lemma 5.28, Proposition 5.29]{WWW24})
     The natural projection $\Prym_{W_i,a}\rightarrow \Prym_a$ is a finite morphism of degree $2^{\#\bf{T_i}-1}$ between abelian varieties whose kernel are 2-torsion points.
 \end{proposition}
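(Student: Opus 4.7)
My plan is to combine the pointwise count of $\iota$-isotropic subspaces already established in Theorem~\ref{Thm. decomposition} with a relative construction realizing $\Prym_{W_i,a}$ as an isogeny cover of $\Prym_a$. First, I would recall that each type D2 block $\bf{T}_i = [\alpha_1, \beta_1, \beta_1, \ldots, \beta_k, \beta_k, \alpha_2]$ contributes a distinguished set of marked points on $\bar{\Sigma}_a$: two $\sigma$-fixed points from the odd factors $\alpha_1, \alpha_2$, and $k$ pairs of $\sigma$-dual points from the even factors. The vector space $Q_i$ is a direct sum indexed by these $\#\bf{T}_i = 2k+2$ points, with pairing induced by Grothendieck--Serre duality. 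The pointwise count from \cite[Proposition 2.43]{WWW24} already gives exactly $2^{\#\bf{T}_i - 1}$ such $\iota$-isotropic subspaces, which establishes the expected degree of the projection.

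Next, I would globalize this over $\Prym_a$. As $\sL$ ranges over $\Prym(\sR_a)$, the sheaves $Q_i$ assemble into a vector bundle over $\Prym(\sR_a)$ (supported on the subscheme of marked points of type D2), since $Q_i$ is the cokernel of the natural map $\bar{\pi}_{a*}\sL \to \bar{\pi}_{a*}\sL^{\vee}(\sR_a)$, and the induced pairing varies algebraically. Hence $\Prym_{W_i,a}$ is a closed subscheme of a relative orthogonal Grassmannian bundle over $\Prym_a$, with finite fibers of constant cardinality $2^{\#\bf{T}_i - 1}$; in particular the projection is finite and flat of that degree.

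To upgrade this to a morphism of abelian varieties, I would follow the strategy of \cite[Lemma 5.28]{WWW24} and identify $\Prym_{W_i,a}$ as the pullback of an isogeny. Concretely, the $\iota$-isotropic subspaces correspond to sign choices, one for each pair among the $\#\bf{T}_i - 1$ degrees of freedom after imposing the global parity constraint (the analogue of the maximal isotropic Grassmannian splitting into two components). Each sign choice amounts to selecting a square root of an evaluation of the Poincar\'e bundle $\sP_{x_j}$, and these square roots patch into a $(\Z/2)^{\#\bf{T}_i - 1}$-torsor. Such square root data are classified by a $2$-isogeny, which endows $\Prym_{W_i,a}$ with an abelian variety structure making the projection a group homomorphism with kernel $(\Z/2)^{\#\bf{T}_i - 1}\subset\Prym_{W_i,a}[2]$.

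The main obstacle is the third step: verifying that the relative square-root construction matches the pointwise description of $\iota$-isotropic subspaces given in Theorem~\ref{Thm. decomposition}, and then propagating this matching consistently as $\sL$ varies. This requires tracking how the signs defining $\iota$-isotropy transform under deformation of $\sL$, using the fact that $\sigma^*\sL\cong\sL^{\vee}(\sR_a)$ provides canonical pairings at the $\sigma$-fixed and $\sigma$-swapped marked points, so that a change of $\sL$ by a $2$-torsion element of $\Prym_a$ precisely re-labels the sign choices. Once this compatibility is checked, the identification of the kernel with $2$-torsion points is immediate from the $(\Z/2)^{\#\bf{T}_i - 1}$-torsor structure.
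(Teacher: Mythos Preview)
The paper does not give its own proof of this proposition; it is stated purely as a citation of \cite[Lemma~5.28, Proposition~5.29]{WWW24}, with no argument supplied. So there is no ``paper's proof'' to compare your proposal against.

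That said, your outline is consistent with the surrounding discussion in the paper (the description of $\Prym_{W_i,a}$ as a subvariety of an orthogonal Grassmannian bundle over $\Prym_a$, and the appeal to \cite[Proposition~2.43]{WWW24} for the pointwise count of $\iota$-isotropic subspaces), and the degree count via $2^{\#\mathbf{T}_i-1}$ is exactly what is invoked in the proof of Theorem~\ref{Thm. decomposition}. Your third step---realizing the cover as a $(\Z/2)^{\#\mathbf{T}_i-1}$-torsor via square roots of Poincar\'e line bundles---is precisely the mechanism the paper alludes to through the line bundles $\sP_{x_j}$ and their behavior under $\sigma$. The main caution is that the paper itself defers all of this to \cite{WWW24}, so verifying that your sign-tracking argument actually reproduces the abelian-variety structure (rather than merely a finite \'etale cover) requires unwinding the cited Lemma~5.28 and Proposition~5.29 there; your sketch is plausible but does not by itself supply that verification.
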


\begin{definition}
    Denote the fiber product
    \[ \prod_{\Prym_a}^{\bf{T}_i \text{ of type D2}}\Prym_{W_i,a}\times_{\Prym_a}\Prym(\sR_a)
    \] 
    by $\Prym_{\Obar, a}(\sR_a)$. We have a finite cover $$\Prym_{\Obar, a}\longrightarrow \Prym_a$$ which is a \emph{connected component} of the similar fiber product  $\prod_{\Prym_a}\Prym_{W_i,a}$ coming from the natural morphism:
    \[
    \Prym^{\vee}_{\Obar,a}\rightarrow\Prym_{\Obar,a}
    \]
    where $\Prym^{\vee}_{\Obar,a}$ is the dual of $\Prym_{\Obar,a}$ and the morphism is induced from the natural polarization of prym varieties. In particular, each connected component of $\Prym_{\Obar, a}(\sR_a)$ is a torsor over $\Prym_{\Obar, a}$.
\end{definition}

\begin{proposition}\label{Prop: factor through}
    $\Prym_{\Obar, a}(\sR_a)$ has two connected components. The morphism $L_D$ factor through $\Prym_{\Obar, a}(\sR_a)\rightarrow \Prym(\sR_a)$. Moreover, $F_a^\pm$ is isomorphic to a connected component of $\Prym_{\Obar, a}(\sR_a)$.
\end{proposition}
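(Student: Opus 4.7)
My plan is to promote the map $L_D$ of Proposition~\ref{prop:the map l} to a map into $\Prym_{\Obar,a}(\sR_a)$ using the local decomposition of Theorem~\ref{Thm. decomposition}, and then to verify that the resulting map is an isomorphism onto one connected component by reversing the construction. First I would analyze the extra data carried by a residually nilpotent Higgs bundle $(E,\theta)\in F_a^{\pm}$ beyond the line bundle $\sL\in\Prym(\sR_a)$ produced by $L_D$. By Theorem~\ref{Thm. decomposition} applied in the formal neighborhood of $x$, the inclusions $E'=\bar\pi_{a*}\sL\subset E\subset (E')^\vee$ identify the quotient $(E')^\vee/E'$ with a skyscraper $\bigoplus_i Q_i$ at $x$ indexed by the type D2 blocks $\bf{T}_i$, and the lattice $E$ then corresponds precisely to a choice of $\iota$-isotropic subspace $W_i\subset Q_i$ for each $i$. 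By the definitions of $\Prym_{W_i,a}$ and of $\Prym_{\Obar,a}(\sR_a)$, the datum $(\sL;\{W_i\})$ is a point of $\Prym_{\Obar,a}(\sR_a)$ mapping to $\sL$, giving the desired factorization $\tilde L_D\colon F_a^{\pm}\to\Prym_{\Obar,a}(\sR_a)$. The fact that the image lands in the subvariety defined by the polarization $\Prym^\vee_{\Obar,a}\to\Prym_{\Obar,a}$, rather than the larger fiber product over all $\iota$-isotropic choices, is forced by the $\SO$-structure on $E$; this is the same compatibility that controls the B/C cases of \cite{WWW24}.

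Next I would show that $\tilde L_D$ is an isomorphism onto a connected component by constructing an inverse through Beauville--Laszlo gluing. Given $(\sL;\{W_i\})\in\Prym_{\Obar,a}(\sR_a)$, on $\Sigma\setminus\{x\}$ the sheaf $\bar\pi_{a*}\sL$ already carries the Higgs field and orthogonal pairing coming from smoothness of the spectral curve away from $x$, following \cite[5.14]{Hit87S}; on a formal neighborhood of $x$ the reconstruction in the proof of Theorem~\ref{Thm. decomposition} enlarges $\bar\pi_{a*}\sL$ to a residually nilpotent lattice $E$ using exactly the data $\{W_i\}$. The two constructions agree over the punctured disk since both recover the generic fiber of $E$ from $\sL$, so Beauville--Laszlo produces a global $(E,\theta)\in F_a^{\pm}$ inverting $\tilde L_D$ on its image. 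Hence $\tilde L_D$ is an isomorphism onto a union of connected components of $\Prym_{\Obar,a}(\sR_a)$.

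For the component count, I would use that each connected component of $\Prym_{\Obar,a}(\sR_a)$ is a torsor over $\Prym_{\Obar,a}$, so the total number of components is the number of $\Prym_{\Obar,a}$-orbits on the pullback of $\Prym(\sR_a)$ along the finite cover $\Prym_{\Obar,a}\to\Prym_a$. Since $\Prym(\sR_a)$ is a union of $\Prym_a$-torsors and the kernel of $\Prym_{\Obar,a}\to\Prym_a$ consists of $2$-torsion by \cite[Lemma 5.28, Proposition 5.29]{WWW24}, a direct parity computation using that $|\sR_a|$ is even yields exactly two components. Matching these two components with $\mathbf{Higgs}_{\Obar}^{\pm}$ under $\tilde L_D$ then shows each $F_a^{\pm}$ is a single component.

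The step I expect to be the main obstacle is the matching of the ``$\pm$'' labels, namely showing that the discrete invariant distinguishing $F_a^{+}$ from $F_a^{-}$ (arising from the $\SO$-framing $\alpha_E$) agrees with the invariant distinguishing the two components of $\Prym_{\Obar,a}(\sR_a)$. This is a parity statement comparing the orientation of the framed orthogonal bundle $E$ with a Pfaffian-type discrete invariant on the normalized spectral side, and I expect to establish it by a local computation at $x$ combined with the component analysis already carried out for $\prod_{\Prym_a}\Prym_{W_i,a}$ in \cite{WWW24}.
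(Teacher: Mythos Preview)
Your approach is essentially the same as the paper's: lift $L_D$ to $\Prym_{\Obar,a}(\sR_a)$ via the $\iota$-isotropic data from Theorem~\ref{Thm. decomposition}, invert it by gluing, and then count components. The paper simply compresses your steps by citing \cite[Lemma~5.7, Propositions~5.16, 5.26, 5.29]{WWW24} for both the two-component count and the identification of $F_a^{\pm}$ with a single component, rather than rerunning the parity and Beauville--Laszlo arguments.

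Two small points to tighten. First, you write that the image ``lands in the subvariety defined by the polarization $\Prym^\vee_{\Obar,a}\to\Prym_{\Obar,a}$, rather than the larger fiber product'' and that this is ``forced by the $\SO$-structure.'' But by its definition $\Prym_{\Obar,a}(\sR_a)$ \emph{is} the full fiber product $\prod_{\Prym_a}\Prym_{W_i,a}\times_{\Prym_a}\Prym(\sR_a)$; the polarization is used only to single out the connected abelian variety $\Prym_{\Obar,a}$ over which each component is a torsor. So that step is automatic and needs no $\SO$-argument. Second, the paper treats separately the case where the partition has no type~D2 block: then $\sR_a=\emptyset$, the $\Prym_{W_i,a}$ are absent, and $\Prym_{\Obar,a}(\sR_a)=\Prym(\sR_a)$ already has two components because $\sigma$ is fixed-point free. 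Your parity sketch implicitly covers this (with $|\sR_a|=0$), but making the case distinction explicit avoids degenerate products and matches the paper's organization.
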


\begin{proof}
    First note that the filtration corresponding to $P_{JM}$ is uniquely determined by nilpotent orbit if we fix the Levi type. Then, we only consider connected components of nilpotent orbits.
    
    If there exists a type D2 in the partition of $\bf{O}$, consider the points $x_j$ such that $d_j\in \bf{T}_i$ for some $\bf{T}_i$ of type D2, then such points are all the points on $\bar{\Sigma}$ fixed by $\sigma$. Now by \cite[ Lemma 5.7, Proposition 5.16, 5.29 ]{WWW24}, we see that $\Prym_{\Obar, a}(\sR_a)$ has two connected components. Similar as the third part of the proof of \cite[Proposition 5.26]{WWW24}, one can show that $F_a^\pm$ is isomorphic to a connected component of $\Prym_{\Obar, a}(\sR_a)$.

    When there is no type D2 in the partition of $\bf{O}$, the involution $\sigma$ has no fixed points. In this case $\Prym_{\Obar, a}(\sR_a)=\Prym(\sR_a)$ has two connected components. Thus we also have that $F_a^\pm$ is isomorphic to a connected component of $\Prym_{\Obar, a}(\sR_a)$.
\end{proof}

\begin{theorem}\label{thm.O fiber}
    For $a\in \bf{A}^{\text{KL}}$, the residually nilpotent Hitchin fibers $(h_{\Obar}^{\pm})^{-1}(a)$ is a torsor of $\Prym_{\Obar, a}$. As a consequence, $\bf{O}$ is special if and only if $(h_{\Obar}^{\pm})^{-1}(a)$ is a torsor of a self-dual abelian variety.
\end{theorem}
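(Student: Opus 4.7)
The plan is to split Theorem~\ref{thm.O fiber} into its two assertions and treat them in sequence.

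The first assertion---that $(h_{\Obar}^{\pm})^{-1}(a)$ is a torsor of $\Prym_{\Obar,a}$---follows essentially for free from what is already assembled. Proposition~\ref{Prop: factor through} identifies $F_a^{\pm}$ with a connected component of $\Prym_{\Obar,a}(\sR_a)$, and by construction of that fiber product, together with the fact that $\Prym(\sR_a)$ is a $\Prym_a$-torsor, each connected component of $\Prym_{\Obar,a}(\sR_a)$ is a torsor for the abelian variety $\Prym_{\Obar,a}$.

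For the self-duality criterion I would reduce the question to a polarization calculation on the tower $\Prym_{\Obar,a}\to\Prym_a$, which is an isogeny of degree $2^{\beta(\mathbf{d})-c(\mathbf{d})}$. The natural polarization on $\Prym_{\Obar,a}$ is inherited through this isogeny from that on $\Prym_a$, itself coming from restricting the principal polarization on $\Jac(\bar{\Sigma}_a)$ along the involution $\sigma$. The polarization type is controlled by the fixed locus of $\sigma$, which, by Proposition~\ref{prop:deg}, receives no contribution from blocks of type D1 or D1* (their irreducible factors come in Galois-conjugate pairs), while a D2 block $[\alpha_1,\beta_1,\beta_1,\ldots,\beta_k,\beta_k,\alpha_2]$ contributes exactly $2k+2$ fixed points from the self-dual factors in $\mathbf{T}_i'$.

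The third step is a block-by-block analysis of each D2 contribution. The cover $\Prym_{W_i,a}\to\Prym_a$ has degree $2^{\#\mathbf{T}_i-1}$, and I would show---following the strategy of \cite{WWW24} for types B and C---that for $k=0$ this degree-$2$ correction exactly absorbs the polarization twist introduced by the two ramification points, so the corresponding factor of $\Prym_{\Obar,a}$ remains principally polarized; for $k\geq 1$, the cover has degree $2^{2k+1}$ against $2k+2$ fixed points and the correction is insufficient, leaving a residual type $(2,2,\ldots)$ factor in the polarization. Combining this with Lemma~\ref{lem.special}, which characterizes special D-type partitions precisely as those without a D2 block with $k\geq 1$, yields the desired equivalence. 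The main obstacle will be carrying out this polarization computation precisely: one must compare the theta bundle on $\bar{\Sigma}_a$, its descent along $\sigma$, and the kernel of the isogeny $\Prym_{\Obar,a}^{\vee}\to\Prym_{\Obar,a}$, tracking the contributions of $\iota$-isotropic subspaces (from the proof of Theorem~\ref{Thm. decomposition}) for each D2 block.
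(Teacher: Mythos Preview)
Your proposal is correct and follows essentially the same approach as the paper: the first assertion is handled identically via Proposition~\ref{Prop: factor through}, and the self-duality criterion is reduced to a block-by-block polarization analysis of the covers $\Prym_{W_i,a}\to\Prym_a$ combined with Lemma~\ref{lem.special}. The only difference is that the paper's proof is more compressed---it defers the polarization computation you outline in your third step to \cite[Lemma~5.12]{WWW24} and the construction of $\Prym_{W_i,a}$, rather than redoing the theta-bundle and kernel comparison you anticipate as the main obstacle.
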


\begin{proof}
    Since $\mathbf{A}^{\mathrm{KL}}$ is identical to $\mathbf{H}^{\mathrm{KL}}$, thus by Proposition~\ref{Prop: factor through} we see that $(h_{\Obar}^\pm)^{-1}(a)$ is a torsor of $\Prym_{\Obar, a}$.

    By Lemma \ref{lem.special}, $\bf{O}$ is special if and only if each type D2 in the partition is of the form $[\alpha_1, \alpha_2]$ with $\alpha_1>\alpha_2$. Recall that $\Prym_{\Obar, a}$ is isomorphic to the fiber product of $\Prym_{W_i, a}$ for each $\bf{T}_i$ of type D2, hence by \cite[Lemma 5.12]{WWW24} and the construction of $\Prym_{W_i, a}$, $\Prym_{\Obar, a}$ is self-dual if and only each type D2 is of length $2$, which is equivalent to $\bf{O}$ being special.
\end{proof}

\subsection{The Coulomb Hitchin base}

The space $\mathbf{H}_{\Obar}$ turns out to be the zero locus of certain quadratic relations in $\mathbf{H}$, which we now show. This has been observed before, for example see \cite{BK18}.

Fix a local coordinate $t$ around the marked point $x$, and trivialize the canonical bundle $\omega_\Sigma$ via $dt$. This yields a surjective map 
\[
    \operatorname{ev}_x:\bf{H}\longrightarrow \mathbb{C}^n,
\]
defined by sending each section to the coefficient of $t^{\eta_{2i} - 2i}$ (or $t^{\eta_{2n}/2 - n}$ for the final term) in its local expansion in powers of $t$.

\begin{remark}
  The map $\operatorname{ev}_x$ depends on the choice of local coordinate $t$. However, as we will see later, the description of $\mathbf{H}_{\Obar}$ and $\mathbf{A}_{\Obar}$ are independent of this choice.   
\end{remark}

Consider each type D1* block $\mathbf{T}_i$ in the partition $\mathbf{d}$ such that
\[
    \mathbf{T}_{i-1} \neq \mathbf{T}_i = \cdots = \mathbf{T}_{i+k_i} \neq \mathbf{T}_{i+k_i+1}.
\]
Let $\mathbf{T}_{i+j} = [d_{i_1+2j}, d_{i_1+1+2j}]$ for $0\leq j \leq k_i$ so we have
\[
    d_{i_1} = d_{i_1+1} = \cdots = d_{i_1 + 2k_i + 1}.
\]

We now define a set of relations on $\mathbb{C}^n$, assuming $c_0 := 1$. Recall that there are $l$ many blocks in partition $\mathbf{d}$. Then:
\begin{itemize}
    \item[1.] If $i + k_i < l$, define
    \begin{equation} \label{equation1}
        p_i(\lambda) := \sum_{j=0}^{2k_i + 2} c_{\scriptscriptstyle{\sum_{s < i} d_s + j d_i}} \, \lambda^{2k_i + 2 - j}\quad \in \; \CC[\lambda],
    \end{equation}
    and require that $p_i(\lambda)$ is a square.

    \item[2.] If $i + k_i = l$, define
    \begin{equation} \label{equation2}
        p_i(\lambda) := \sum_{j=0}^{2k_i + 1} c_{\scriptscriptstyle{\sum_{s < i} d_s + j d_i}} \, \lambda^{2k_i + 2 - j} + c_{2n}^2\quad \in \; \CC[\lambda],
    \end{equation}
    and again require that $p_i(\lambda)$ is a square.
\end{itemize}

\begin{definition}\label{def:d and d^aff}
Let $\mathfrak{d}_{\mathbf{d}} \subset \mathbb{C}^n$ denote the common zero locus of the square-root relations given in \eqref{equation1} and \eqref{equation2}, associated to the type~D1* blocks in the partition $\mathbf{d}$. We define $\mathfrak{d}^{\mathrm{aff}}_{\mathbf{d}}$ to be the normalization of $\mathfrak{d}_{\mathbf{d}}$.
\end{definition}

\begin{example}\label{example}
    Before proceeding further, let us illustrate the construction with an example. 
    
    Consider the orbit $\mathbf{O}$ with partition $[d_1,d_2,d_3,d_4,d_5,d_6]=[4,4,3,3,2,2]$. For a generic $\theta \in L^+\mathfrak{g}$ such that $\theta(0)\in \mathbf{O}$, we assume that the characteristic polynomial of $\theta$ admits the following factorization: 
    \[
        \chi(\theta)=f_{1}(\lambda)f_{1}(-\lambda)f_{2}(\lambda)f_{2}(-\lambda)f_{3}(\lambda)f_{3}(-\lambda)
    \]
    where $f_i(\lambda)=\lambda^{d_{2i-1}}+\cdots +h_{i}$. Hence $a_i\in t\sO$ and we use $\overline{h_i}$ to denote the leading term of $a_i$ in $t$. 

    By a direct computation, we see that the leading term of coefficients of $\chi(\theta)$ can be stated as follows:
    \begin{table}[htbp]
    \centering
    \[
        \begin{tabular}{c|c|c|c|c|c|c}    
        \text{Term}       & $\lambda^{18}$ & $\lambda^{14}$     & $\lambda^{10}$     & $\lambda^4$             & $\lambda^2$                 & $\lambda^0$  \\ 
        \hline
        \text{Coefficient} & $1$            & $2\overline{h_1}$  & $\overline{h_1}^2$ & $\overline{h_1 h_2}^2$ & $2\overline{h_1 h_2}^2\overline{h_3}$ & $\overline{h_1 h_2 h_3}^2$
        \end{tabular}
    \]
    \end{table}

    By \eqref{equation1}, here we have $p_1(\lambda)=c_0\lambda^{2}+c_{4}\lambda+c_8$. Since $c_0=1$, $c_4=2\overline{h_1}$ and $c_8=\overline{h_1}^2$, we see that $p_1(\lambda)$ is a square and it is similar for $p_3(\lambda)$.

    Meanwhile, since $c_0=1$ and $c_{14}=\overline{h_1h_2h_3}$ is given by the Pfaffian. By our rule for $\mathfrak{d}^{\mathrm{aff}}_{\bf{d}}$, we see that $\sqrt{c_{8}}$ and $\sqrt{c_{14}}$ are used. In particular, $\mathfrak{d}^{\mathrm{aff}}_{\bf{d}}$ is isomorphic to an affine space in this case.

\end{example}

The following technical lemmas will be used to describe $\bf{H}_{\Obar}$ and $\bf{A}_{\Obar}$.

\begin{lemma}\label{coe of product}
    Let $f(\lambda) = \lambda^{N} + a_1 \lambda^{N-1} + \cdots + a_{N} \in \mathcal{O}[\lambda]$. Assume that $f(\lambda)$ admits a factorization $f(\lambda) = \prod_{i=1}^r f_i(\lambda)$, where each $f_i(\lambda) = \lambda^{e_i} + b_{i,1} \lambda^{e_i-1} \cdots + b_{i,e_i}$ is an Eisenstein polynomial in general position, and the exponents satisfy $e_1 \geq \cdots \geq e_r$. For $b \in \mathcal{O}$, let $\overline{b}$ denote the leading (i.e., lowest order) nonzero coefficient of $b$ in its expansion in powers of $t$. Then, for any $i$ such that $e_i > e_{i+1}$, we have
    \[
        \overline{a_{\sum_{j \leq i} e_j}} = \prod_{j \leq i} \overline{b_{j,e_j}}.
    \]
\end{lemma}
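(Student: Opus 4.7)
The plan is to expand the coefficient $a_n$ (where $n := \sum_{j \leq i} e_j$) directly using the factorization $f = \prod_j f_j$, and then pick out the unique monomial of minimal $t$-adic valuation.

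Setting $b_{j,0} := 1$ and expanding the product gives
\[
a_n \;=\; \sum_{\substack{k_1 + \cdots + k_r = n \\ 0 \leq k_j \leq e_j}} \prod_{j=1}^r b_{j,k_j}.
\]
The next step is to control the valuation of each summand. By the Eisenstein hypothesis on each $f_j$, every $b_{j,k}$ with $k \geq 1$ lies in $t\mathcal{O}$, while $b_{j,e_j}$ has $t$-adic valuation exactly $1$. Writing $S := \{\, j : k_j \geq 1 \,\}$, the corresponding product therefore has $t$-adic valuation at least $|S|$, with equality if $k_j = e_j$ for every $j \in S$.

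The combinatorial heart of the argument is the claim that any admissible tuple satisfies $|S| \geq i$, with equality forcing both $S = \{1, \ldots, i\}$ and $k_j = e_j$ for each $j \in S$. Indeed, the constraints yield
\[
\sum_{j \in S} e_j \;\geq\; \sum_{j \in S} k_j \;=\; n \;=\; \sum_{j \leq i} e_j,
\]
and since $(e_j)$ is weakly decreasing with the strict drop $e_i > e_{i+1}$, the subset $\{1, \ldots, i\}$ is the unique $i$-element index set whose $e$-sum reaches $n$, while no subset of size strictly less than $i$ reaches $n$ at all. Once $S = \{1, \ldots, i\}$ is pinned down, the equality $\sum_{j \in S} k_j = \sum_{j \in S} e_j$ combined with $k_j \leq e_j$ forces $k_j = e_j$ throughout $S$.

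Combining the two steps, the unique summand of minimal valuation $i$ in the expansion of $a_n$ is $\prod_{j \leq i} b_{j,e_j}$, whose leading coefficient is $\prod_{j \leq i} \overline{b_{j,e_j}}$ (nonzero by Eisenstein); every other summand has strictly higher $t$-adic valuation and so does not affect the leading term. This yields
\[
\overline{a_n} \;=\; \prod_{j \leq i} \overline{b_{j,e_j}},
\]
as required. The only delicate point is the extremality step, whose key input is the strict inequality $e_i > e_{i+1}$; without it several tuples of the same minimal valuation would compete and the leading coefficient would no longer collapse into a single product. The "general position" assumption on the $f_j$ plays no further role here beyond the Eisenstein condition, since uniqueness of the minimizing tuple rules out any cancellation.
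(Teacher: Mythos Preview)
Your proof is correct and follows essentially the same route as the paper: expand $a_n$ as a sum over tuples $(k_1,\ldots,k_r)$, bound the $t$-adic valuation of each summand below by the number of nonzero entries, and use the strict drop $e_i > e_{i+1}$ to show the minimizing tuple is unique. If anything, your version is slightly more careful than the paper's, which asserts the equality $\operatorname{ord}_t\bigl(\prod_j b_{j,k_j}\bigr) = \#\{j : k_j \neq 0\}$ (implicitly invoking general position), whereas you work only with the inequality $\geq |S|$ and note that uniqueness of the minimizer already precludes cancellation.
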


\begin{proof}
    The coefficient $a_{\scriptscriptstyle{\sum_{j \leq i} e_j}}$ corresponds to the term $\lambda^{N - \sum_{j \leq i} e_j}$ in the expansion of $f(\lambda)$. Each $f_j(\lambda)$ contributes a term $b_{j,k}$ at degree $\lambda^{e_j - k}$, so the total contribution to $a_{\sum_{j \leq i} e_j}$ comes from
    \[
        a_{\scriptscriptstyle{\sum_{j\leq i}e_j}}=\sum_{\{(k_j)\; \mid\; \sum_{j=1}^{r} k_j=\sum_{j\leq i}e_j\}}\prod_{j=1}^{r}b_{j,k_j}
    \] 
    Observe that $\operatorname{ord}_t \left( \prod_{j=1}^{r} b_{j,k_j} \right) = \#\{ j \mid k_j \neq 0 \}$, since each non-zero $k_j$ contributes a higher order term in $t$. Therefore, the dominant term in the expansion (i.e., the one with minimal $t$-order) is achieved by minimizing the number of nonzero $k_j$.

    Under the condition $e_i > e_{i+1}$, the only way to satisfy $\sum_{j=1}^r k_j = \sum_{j \leq i} e_j$ with the fewest non-zero $k_j$ is to take $k_j = e_j$ for $j \leq i$, and $k_j = 0$ for $j > i$. This gives:
    \[
        \prod_{j=1}^{r} b_{j,k_j} = \left( \prod_{j > i} b_{j,0} \right) \cdot \left( \prod_{j \leq i} b_{j,e_j} \right).
    \]
    Since each $f_j$ is Eisenstein, $b_{j,0} = 1$. Therefore, the lowest-order (i.e., leading) term of $a_{\scriptscriptstyle{\sum_{j \leq i} e_j}}$ is
    \[
        \overline{a_{\scriptscriptstyle{\sum_{j \leq i} e_j}}} = \prod_{j \leq i} \overline{b_{j,e_j}},
    \]
    as claimed.
\end{proof}

\begin{lemma} \label{Volume form}
    Let $V$ be a vector space over $\mathbb{C}$, equipped with a non-degenerate symmetric bilinear form $g : V \otimes V \rightarrow \mathbb{C}$, and suppose $\dim V = 2n$ is even. Then any maximal isotropic subspace $W \subset V$ induces a framing
    \[
        \alpha_W : \bigwedge^{2n} V \xrightarrow{\sim} \mathbb{C}.
    \]
    Moreover, if $W_1$ and $W_2$ are maximal isotropic subspaces lying in different connected components of the orthogonal Grassmannian $\operatorname{OG}(n, V)$, then
    \[
        \alpha_{W_1} = -\alpha_{W_2}.
    \]
\end{lemma}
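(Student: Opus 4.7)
The plan is to construct $\alpha_W$ as the natural identification obtained from the short exact sequence $0\to W\to V\to V/W\to 0$ together with the form-induced isomorphism $V/W\cong W^{*}$, and then use equivariance under $O(V,g)$ to deduce the sign relation between the two components of $\operatorname{OG}(n,V)$.

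First I would observe that the short exact sequence above produces a canonical isomorphism $\bigwedge^{2n} V \cong \bigwedge^{n} W \otimes \bigwedge^{n}(V/W)$. Since $W$ is maximal isotropic of dimension $n$, the nondegenerate pairing $g$ descends to a perfect pairing $W\otimes (V/W)\to\mathbb{C}$, hence induces an isomorphism $V/W\cong W^{*}$, and therefore $\bigwedge^{n}(V/W)\cong(\bigwedge^{n}W)^{*}$. Composing with the evaluation pairing $\bigwedge^{n}W\otimes(\bigwedge^{n}W)^{*}\to\mathbb{C}$ yields the desired framing $\alpha_W:\bigwedge^{2n}V\xrightarrow{\sim}\mathbb{C}$. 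Concretely, if $e_1,\dots,e_n$ is any basis of $W$ and $f_1,\dots,f_n$ is a dual basis in a chosen complement (e.g.\ a complementary maximal isotropic), so that $g(e_i,f_j)=\delta_{ij}$, then $\alpha_W(e_1\wedge\cdots\wedge e_n\wedge f_1\wedge\cdots\wedge f_n)=1$. This step is purely bookkeeping and I would not write the diagram chase in detail.

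Next I would verify the $O(V,g)$-equivariance: for any $h\in O(V,g)$, the two maps entering the construction (the SES and $g$) are strictly natural, so $\alpha_{hW}(h\cdot\omega)=\alpha_{W}(\omega)$ for all $\omega\in\bigwedge^{2n}V$. Since $h$ acts on $\bigwedge^{2n}V$ by multiplication by $\det h=\pm 1$, this rearranges to
\[
\alpha_{hW}=(\det h)^{-1}\,\alpha_{W}=(\det h)\,\alpha_{W}.
\]
Because $O(V,g)$ acts transitively on $\operatorname{OG}(n,V)$ while $SO(V,g)$ preserves each connected component and the other coset swaps them, $\alpha_W$ depends only on the component of $W$, and $\alpha_{W_1}=-\alpha_{W_2}$ whenever $W_1$ and $W_2$ lie in different components. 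This gives the claim.

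The main (minor) obstacle is purely notational: making sure the canonical isomorphism $V/W\cong W^{*}$ is normalized so that the resulting $\alpha_W$ is genuinely natural under $O(V,g)$ (rather than only up to a universal sign depending on $n$). Once that normalization is fixed, the rest is a one-line equivariance argument, so there is no substantive difficulty beyond the linear-algebra setup.
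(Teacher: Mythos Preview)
Your proposal is correct and follows essentially the same approach as the paper: both construct $\alpha_W$ by combining the canonical isomorphism $\bigwedge^{2n}V\cong\bigwedge^{n}W\otimes\bigwedge^{n}(V/W)$ from the short exact sequence with the perfect pairing $W\otimes(V/W)\to\mathbb{C}$ induced by $g$, and then deduce the sign relation by picking $A\in O(V)\setminus SO(V)$ with $A(W_1)=W_2$ and using that $A$ acts on $\bigwedge^{2n}V$ by $\det A=-1$. Your equivariance formulation $\alpha_{hW}=(\det h)\,\alpha_W$ is slightly more explicit and also yields the useful extra fact that $\alpha_W$ depends only on the component of $W$, but the argument is otherwise identical.
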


\begin{proof}
    Let $W \subset V$ be a maximal isotropic subspace. Then we have a short exact sequence
    \[
        0 \longrightarrow W \longrightarrow V \longrightarrow V/W \longrightarrow 0.
    \]
    Since $\mathsf{W}$ is maximal isotropic and $g$ is non-degenerate, the induced pairing 
    \[
        W \otimes V/W \rightarrow \mathbb{C}
    \]
    is perfect. This pairing yields a canonical isomorphism
    \[
        \bigwedge^n W \otimes \bigwedge^n (V/W) \xrightarrow{\sim} \mathbb{C}.
    \]
    On the other hand, the exact sequence above provides a natural isomorphism
    \[
        \bigwedge^{2n} V \cong \bigwedge^n W \otimes \bigwedge^n (V/W).
    \]
    Composing these isomorphisms defines the volume framing $\alpha_W : \bigwedge^{2n} V \xrightarrow{\sim} \mathbb{C}$.

    Now, suppose $W_1$ and $W_2$ are maximal isotropic subspaces lying in different connected components of $\operatorname{OG}(n, V)$. Then there exists an element $A \in \operatorname{O}_{2n}(V) \setminus \operatorname{SO}_{2n}(V)$ such that $A(W_1) = W_2$. Since $A$ has determinant $-1$, it acts on $\bigwedge^{2n} V$ by $-1$, and therefore sends $\alpha_{W_1}$ to $-\alpha_{W_2}$. Hence,
    \[
        \alpha_{W_1} = -\alpha_{W_2},
    \]
    as claimed.
\end{proof}

\subsubsection{Veronese Ring of Degree 2 as the normalization}
First consider the following ring homomorphisms:
\begin{align}\label{eq:quadratic items}
    \mathbb{C}[c_0, c_1,\ldots, c_{2e}]&\xrightarrow{\phi} \mathbb{C}[y_0,y_1,\ldots, y_e]\\\nonumber
    c_m&\mapsto \sum_{0\le i\le m}y_{i}y_{m-i}
\end{align}
We denote the image by $Q_e$ which is a subring of $\mathbb{C}[y_0, y_1,\ldots, y_e]$. In particular, we have:
\[
(\sum_{i=0}^{e}y_i\lambda^i)^2=\phi(\sum_{i=0}^{2e}c_i\lambda^i).
\]
In the following, for simplicity, we may denote $c_m:=\sum_{0\le i\le m}y_{i}y_{m-i}$ by dropping $\phi$.
We put $V_{e}=\mathbb{C}[y_iy_j]_{0\le i\le j\le e}\subset \mathbb{C}[y_0,\ldots, y_e]$. Then $V_e$ is the so-called Veronese ring of degree 2, which is a direct summand of the polynomial ring, hence Cohen--Macaulay and normal. See also \cite{Sai94}.

\begin{lemma}\label{lem:single normalization}
    Consider $Q_e$ as above, then each $y_i$ is integral over $Q_e$ for $0\leq i \leq e$. In particular, $V_e$ (the Veronese ring of degree 2)  is the normalization of $Q_e$.
\end{lemma}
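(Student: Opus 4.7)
My plan is to reduce the lemma to a Vandermonde evaluation trick. First, I record $Q_e\subset V_e$ trivially from $c_m=\sum_{i+j=m}y_iy_j\in V_e$, and verify $\operatorname{Frac}(Q_e)=\operatorname{Frac}(V_e)$: writing $c_k=2y_0y_k+R_k$ with $R_k\in\mathbb{Z}[y_1,\ldots,y_{k-1}]$ of total degree $2$ in the $y_i$'s, and using $c_0=y_0^2$, an induction on $k\le e$ gives
\[
    \frac{y_k}{y_0}=\frac{1}{2}\!\left(\frac{c_k}{c_0}-\frac{R_k}{y_0^2}\right)\in\operatorname{Frac}(Q_e).
\]
Hence every $y_iy_j=y_0^2\cdot(y_i/y_0)(y_j/y_0)\in\operatorname{Frac}(Q_e)$, so $V_e\subset\operatorname{Frac}(Q_e)$. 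This matching of fraction fields is what makes the statement ``$V_e$ is the normalization of $Q_e$'' meaningful.

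The central step is showing that each $y_i$ is integral over $Q_e$. Fix $e+1$ pairwise distinct scalars $\lambda_0,\ldots,\lambda_e\in\mathbb{C}$ and set
\[
    v_k := P(\lambda_k)=\sum_{i=0}^{e}y_i\lambda_k^i\ \in\ \mathbb{C}[y_0,\ldots,y_e],
\]
where $P(\lambda)=\sum_i y_i\lambda^i$. The defining identity $P(\lambda)^2=\sum_m c_m\lambda^m$ specializes to
\[
    v_k^2=\sum_{m=0}^{2e}c_m\lambda_k^m\ \in\ Q_e,
\]
so $v_k$ satisfies the monic quadratic $X^2-\sum_m c_m\lambda_k^m\in Q_e[X]$ and is integral over $Q_e$. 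The Vandermonde matrix $M=(\lambda_k^i)_{0\le k,i\le e}$ is invertible over $\mathbb{C}$, and inverting $v_k=\sum_i M_{ki}y_i$ writes $y_i=\sum_k (M^{-1})_{ik}v_k$ as a $\mathbb{C}$-linear combination of elements integral over $Q_e$. Since the integral closure of $Q_e$ is a ring closed under sums, products, and scaling by $\mathbb{C}\subset Q_e$, each $y_i$ is integral over $Q_e$.

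For the ``in particular'' statement, each $y_iy_j$ is integral as a product of integrals, so $V_e=\mathbb{C}[y_iy_j]$ sits inside the integral closure $\overline{Q_e}$ of $Q_e$ in $\operatorname{Frac}(Q_e)=\operatorname{Frac}(V_e)$. Conversely, $V_e=\mathbb{C}[y_0,\ldots,y_e]^{\mathbb{Z}/2}$ under the diagonal sign-flip $y_i\mapsto -y_i$, hence is normal as the invariants of a polynomial ring under a finite group in characteristic zero; this forces $\overline{Q_e}\subset V_e$. Combining, $V_e=\overline{Q_e}$ is the normalization of $Q_e$.

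The only nontrivial ingredient beyond routine bookkeeping is the Vandermonde specialization in the second paragraph, which recasts the noncanonical ``coefficients'' $y_i$ as evaluations $v_k$ each satisfying a clean monic quadratic over $Q_e$; without this trick, directly producing a monic polynomial over $Q_e$ for an intermediate $y_i$ requires tangled elimination, as one sees by hand already at $e=3$. I do not anticipate any substantive obstacle.
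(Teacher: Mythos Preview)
Your proof is correct and takes a genuinely different route from the paper. The paper proves finiteness geometrically: it compactifies the map $\phi$ to a morphism $f:\mathbb{P}^{e+1}\to\mathbb{P}^{2e+1}$, $[t:y_0:\cdots:y_e]\mapsto[t^2:c_0:\cdots:c_{2e}]$, checks quasi-finiteness by solving for the $y_i$'s fiber-by-fiber (inducting on whether $c_0\ne 0$), and then reads off that $\mathbb{C}[y_0,\ldots,y_e]$ is finite over $Q_e$. Your Vandermonde specialization is a purely algebraic shortcut: by evaluating $P(\lambda)^2=\sum_m c_m\lambda^m$ at $e+1$ distinct scalars you produce $e+1$ elements $v_k$ with $v_k^2\in Q_e$, and then recover the $y_i$ as $\mathbb{C}$-linear combinations. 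This avoids projective geometry entirely and gives explicit monic quadratics witnessing integrality; the paper's approach, on the other hand, yields more structural information (generic degree of the map, behavior over the boundary) that feeds into the chained Veronese argument in the next lemma. Both proofs share the same inductive computation of $y_k/y_0\in Q_e[1/c_0]$ to match fraction fields, and both rely on the normality of $V_e$ (you via the invariant-ring description $V_e=\mathbb{C}[y_0,\ldots,y_e]^{\mathbb{Z}/2}$, the paper via the direct-summand property of Veronese subrings).
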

\begin{proof}
    The idea is to "compactify" the map in \eqref{eq:quadratic items}, then quasi-finiteness is sufficient. Consider the following morphism:
    \begin{align}\label{eq:compactified}
        \mathbb{P}^{e+1}&\xrightarrow{f} \mathbb{P}^{2e+1}\\\nonumber
        [t:y_0:\ldots:y_e]&\mapsto [t^2:c_0:\ldots:c_{2e}]
    \end{align}
    Recall that $c_m:=\sum_{0\le i\le m}y_{i}y_{m-i}$. We now verify the quasi-finiteness. First suppose that $c_0\ne 0$, then $y_0^2=c_0$ has 2 nonnegative solutions, and then for each solution, there is a unique $[y_1:\ldots: y_{e}]$ because we solve the linear equation inductively:
    \[
    y_0y_i=c_{i}-\ldots.
    \]
    For $c_0=0$, then $y_0=0$, we reduce to $e-1$ case. In particular, $f:\mathbb{P}^{e+1}\rightarrow f(\mathbb{P}^{e+1})$ is of generic degree 4.
    Now consider the following commutative diagram:
    \[
    \begin{tikzcd}
        \mathbb{A}^{e+1}\ar[r]&f^{-1}(\Spec Q_e)\ar[r]\ar[d]&\mathbb{P}^{e+1}\ar[d, "f"]\\
        &\Spec Q_e\ar[r]&f(\mathbb{P}^{e+1})
    \end{tikzcd}
    \]
    Notice that  $\mathbb{A}^{e+1}\hookrightarrow f^{-1}(\Spec Q_e)$ is defined by $t=0$. Hence we have $\mathbb{C}[y_0,\ldots, y_e]$ is finite over $Q_e$ of generic degree 2. In particular, $V_e$ is finite over $Q_e$.

    We now show that $\Frac(V_e)=\Frac(Q_e)$. Actually, we verify that $\frac{y_i}{y_0}\in Q_e[1/y_0^2]$. Clearly, it holds for $i=0,1$. Suppose this is true for $i\le n$. Consider:
    \[
    c_{n+1}=y_0y_{n+1}+..\in Q_e[1/y_0^2]
    \]
    then $\frac{y_{n+1}}{y_0}\in Q_e[1/y_0^2]$ by induction. Hence we have $V_e[1/y_0^2]=Q_e[1/y_0^2]$. Now the second statement follows since $V_e$ is normal.
\end{proof}

Recall that in Equations \eqref{equation1}, \eqref{equation2}, if we have two adjacent $\textbf{T}_i\neq \textbf{T}_{i+1}$ both of type D1*, the constant term of $p_i(\lambda)$ coincide with the leading coefficient of $p_{i+1}(\lambda)$. Hence we need a ``chained version" of the above normalization. It finally turns out that the Veronese ring of degree 2 is still the normalization.
\begin{Notation}
     We put $\vec{e}=\{e_1,e_2,\ldots, e_{\ell}\}$ with $e_i\ge 1$. Let $A_{\vec{e}}$ be the quotient of the polynomial ring $\mathbb{C}[y_{ij}]_{1\leq i \leq l,\ 0\leq j \leq e_i}$ by the ideal generated by $\{y_{i,e_i}-y_{i+1,0}\}$. Clearly, $A_{\vec{e}}$ is still a polynomial ring. We denote by $V_{\vec{e}}$ the Veronese ring of degree 2 of $A_{\vec{e}}$.
\end{Notation}

\begin{definition}\label{defn:chained Veronese}
   Now consider the subring of $\mathbb{C}[y_{ij}]_{1\leq i \leq l,\ 0\leq j \leq e_i}$  generated by monomials $\{y_{ij}y_{ik}\}_{1\leq i \leq l,\ 0\leq j\leq k \leq e_i}$ and then we call its image in $A_{\vec{e}}$ a \emph{chained Veronese ring of degree 2}, denoted by $CV_{\vec{e}}$. We call $\Spec CV_{\vec{e}}$ an \emph{affine chained Veronese variety of degree 2}. Similarly, we can define chained version of $Q_e$, denoted as $CQ_{\vec{e}}$.
\end{definition}

\begin{lemma}\label{lem:chained v2 is normalization}
    Let $CQ_{\vec{e}}$ be as above, 
    \begin{enumerate}
        \item $V_{\vec{e}}$ is the normalization of $CQ_{\vec{e}}$.
        \item The normalization of $CV_{\vec{e}}[y_{11}]$ and $CQ_{\vec{e}}[y_{i e_i}]$ for any $1\leq i \leq l$ are both $A_{\vec{e}}$, the polynomial ring.
    \end{enumerate}
\end{lemma}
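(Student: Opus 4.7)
The plan is to deduce both statements from Lemma~\ref{lem:single normalization} applied block by block, handling the chain identifications $y_{i,e_i}=y_{i+1,0}$ so that information propagates from one block to its neighbors.

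For part~(1), I start by observing that $V_{\vec{e}}$ is normal: the involution $y_{i,j}\mapsto -y_{i,j}$ is well-defined on $A_{\vec{e}}$ (the chain identifications are sign-preserving), and $V_{\vec{e}}=A_{\vec{e}}^{\mathbb{Z}/2}$ is a direct summand of the polynomial ring $A_{\vec{e}}$. The block-$i$ subring of $CQ_{\vec{e}}$ generated by $\{c_{i,m}\}_{0\le m\le 2e_i}$ is a copy of $Q_{e_i}$, so by Lemma~\ref{lem:single normalization} each $y_{i,j}$ is integral over $CQ_{\vec{e}}$, and hence so is every generator $y_{i,j}y_{i',j'}$ of $V_{\vec{e}}$. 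What remains is $\Frac(CQ_{\vec{e}})=\Frac(V_{\vec{e}})$. From Lemma~\ref{lem:single normalization} I have $y_{i,j}/y_{i,0}\in\Frac(CQ_{\vec{e}})$ for each $i,j$, so the only products left to capture are $y_{i,0}y_{i',0}$. For adjacent blocks, substituting $y_{i+1,0}=y_{i,e_i}$ gives
\[
    y_{i,0}y_{i+1,0}=y_{i,0}y_{i,e_i}=(y_{i,e_i}/y_{i,0})\cdot c_{i,0}\in\Frac(CQ_{\vec{e}}),
\]
and an induction on the block distance $|i-i'|$, inserting factors $y_{j,e_j}/y_{j,0}$, handles the general case.

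For part~(2), I treat both rings similarly. For $CV_{\vec{e}}[y_{11}]$, each $y_{i,j}$ satisfies $y_{i,j}^2\in CV_{\vec{e}}$ (a within-block product), so every $y_{i,j}$ is integral and $A_{\vec{e}}$ is an integral extension of $CV_{\vec{e}}[y_{11}]$. By the same chain argument $\Frac(CV_{\vec{e}})=\Frac(V_{\vec{e}})$, and since $[\Frac(A_{\vec{e}}):\Frac(V_{\vec{e}})]=2$ with $y_{11}\notin\Frac(V_{\vec{e}})$, adjoining $y_{11}$ produces $\Frac(A_{\vec{e}})$. For $CQ_{\vec{e}}[y_{i,e_i}]$, note $y_{i,e_i}^2=c_{i,2e_i}\in CQ_{\vec{e}}$, and the relation
\[
    c_{i,2e_i-1}=2\,y_{i,e_i-1}\,y_{i,e_i}
\]
lets me solve $y_{i,e_i-1}$, and by descending induction every $y_{i,j}$ in block $i$, inside $\Frac(CQ_{\vec{e}}[y_{i,e_i}])$. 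Using $y_{i+1,0}=y_{i,e_i}$ and $y_{i-1,e_{i-1}}=y_{i,0}$ the same recursion propagates to adjacent blocks, and by induction on block index to every block. Integrality of each $y_{i',j'}$ over $CQ_{\vec{e}}$ is part~(1), and $A_{\vec{e}}$ is normal, so it is the normalization.

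The main obstacle I expect is bookkeeping the propagation through the chain: the identifications $y_{i,e_i}=y_{i+1,0}$ simultaneously place certain cross-block monomials (e.g.\ $c_{i,2e_i-1}=2y_{i,e_i-1}y_{i+1,0}$) already inside $CQ_{\vec{e}}$ while still making the degree-$2$ Galois symmetry $y\mapsto -y$ globally well-defined. One must verify that this interplay supplies exactly the required fraction-field elements in each induction step without enlarging the rings beyond what is stated.
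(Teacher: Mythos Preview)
Your proof is correct and follows essentially the same approach as the paper: Lemma~\ref{lem:single normalization} applied block by block for integrality, the chain identifications $y_{i,e_i}=y_{i+1,0}$ to establish fraction-field equality across blocks, and normality of the target ring to conclude. The paper's write-up differs only cosmetically, routing part~(1) through the intermediate ring $CV_{\vec{e}}$ (showing $CQ_{\vec{e}}\subset CV_{\vec{e}}\subset V_{\vec{e}}$ with common fraction field) and phrasing part~(2) via the single inclusion $A_{\vec{e}}\subset V_{\vec{e}}[1/y]$ rather than your explicit recursion and Galois-degree argument.
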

\begin{proof}
     Following the same method as Lemma \ref{lem:single normalization}, we can show that $CV_{\vec{e}}$ is finite over $CQ_{\vec{e}}$ with same fractional field.

     Clearly $V_{\vec{e}}\supset CV_{\vec{e}}$. And for any element $y_{ij}y_{\ell m}\in V_{\vec{e}}$ but not in $CV_{\vec{e}}$, we show that it lies in $\Frac(CV_{\vec{e}})$. First, we assume that $\ell=i+1$. Then $y_{ij}y_{\ell m}=\frac{y_{ij}y_{ie_i}^2y_{\ell m}}{y_{ie_i}^2}\in\Frac(CQ_{\vec{e}})$ since in $A_{\vec{e}}$, we have $y_{ie_i}=y_{(i+1)0}$. The general case can be proved by finding a longer chain.
     
     Now we only need to prove that the Veronese subring of $V_{\vec{e}}$ is finite over $CV_{\vec{e}}$. Again we only need to prove that $y_{ij}y_{\ell m}$ are finite over $CV_{\vec{e}}$. But it follows easily, since $y_{ij}^2y_{\ell m}^2$ lies in the chained Veronese $CV_{\vec{e}}$.

     For the second statement, it follows from that $A_{\vec{e}}\subset V_{\vec{e}}[1/y_{00}]$ since $V_{\vec{e}}$ is the Veronese subring of degree 2.  
\end{proof}

\begin{proposition}\label{prop:dd affine}
    \quad 
    \begin{itemize}
        \item [(1)] If $\bf{O}$ is not very even, then $\mathfrak{d}^{\mathrm{aff}}_{\bf{d}}$ is isomorphic to product of an affine space and several affine Veronese varieties of degree 2. 
        \item [(2)] $\mathfrak{d}^{\mathrm{aff}}_{\bf{d}}$ is disconnected if and only if $\bf{O}$ is very even and in this case $\mathfrak{d}^{\mathrm{aff}}_{\bf{d}}$ is a disjoint union of two isomorphic affine spaces.
    \end{itemize}
\end{proposition}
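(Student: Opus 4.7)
The plan is to reduce the normalization computation to Lemmas~\ref{lem:single normalization} and \ref{lem:chained v2 is normalization} by decomposing $\mathfrak{d}_{\mathbf{d}}$ according to the D1* blocks of $\mathbf{d}$. Call a maximal run of consecutive D1* blocks in $\mathbf{d}$ (with no intervening non-D1* block) a \emph{D1*-chain}. A direct check from the index formula in \eqref{equation1} shows that if $\mathbf{T}_i$ and $\mathbf{T}_{i+k_i+1}$ belong to the same chain then the constant term of $p_i(\lambda)$ equals the leading coefficient of $p_{i+k_i+1}(\lambda)$ (both equal $c_{\sum_{s\leq i+k_i} d_s}$); polynomials from different chains involve disjoint coefficients. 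The coordinates $c_m$ of $\mathbb{C}^n$ appearing in no $p_i$ are unconstrained, contributing an affine-space factor, and the coordinate ring of $\mathfrak{d}_{\mathbf{d}}$ is a tensor product over chains, so normalization can be carried out chain-by-chain.

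For a chain $\mathcal{C}$ with D1* runs $\mathbf{T}_{i_1},\ldots,\mathbf{T}_{i_s}$ and degrees $e_j=k_{i_j}+1$, write $p_{i_j}=q_{i_j}^2$ with $q_{i_j}(\lambda)=\sum_k y_{j,k}\lambda^{e_j-k}$ and identify $y_{j,e_j}=y_{j+1,0}$ (fixing a compatible sign along the chain), presenting the chain's ring as $CQ_{\vec{e}}$ from Definition~\ref{defn:chained Veronese}, possibly with extra boundary constraints. When $\mathbf{O}$ is not very even, each chain falls into one of three types. An \emph{internal} chain ($i_1>1$ and $i_s+k_{i_s}<l$) has no boundary constraint; its ring is $CQ_{\vec{e}}$ (or $Q_{e_1}$ when $s=1$) and Lemma~\ref{lem:chained v2 is normalization}(1) (resp.\ Lemma~\ref{lem:single normalization}) yields normalization $V_{\vec{e}}$, an affine Veronese variety of degree 2. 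A chain with $i_1=1$ (and $i_s+k_{i_s}<l$) adds the constraint $c_0=1$, i.e.\ $y_{1,0}^2=1$; after absorbing the overall chain sign to set $y_{1,0}=1$ one checks that $CQ_{\vec{e}}/(c_{1,0}-1)$ is integral (the two branches $y_{1,0}=\pm 1$ in $A_{\vec{e}}$ are identified by the overall sign flip on the chain), and Lemma~\ref{lem:chained v2 is normalization}(2) gives normalization $A_{\vec{e}}/(y_{1,0}-1)$, an affine space. Symmetrically, a chain with $i_s+k_{i_s}=l$ (and $i_1>1$) has constant term $c_{2n}^2$ of $p_{i_s}$ by \eqref{equation2}; adjoining the Pfaffian variable $c_{2n}$ is equivalent to adjoining $y_{s,e_s}$ up to overall sign, and Lemma~\ref{lem:chained v2 is normalization}(2) again returns an affine space. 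Taking the product across chains proves (1).

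In the very-even case every block is D1*, so there is a unique chain with $i_1=1$ and $i_s+k_{i_s}=l$, and both endpoint conditions are active simultaneously. After absorbing the overall sign to set $y_{1,0}=1$, the chain's ring sits inside $A_{\vec{e}}/(y_{1,0}-1)$; adjoining $c_{2n}$ with $c_{2n}^2=y_{s,e_s}^2$ yields the relation $(c_{2n}-y_{s,e_s})(c_{2n}+y_{s,e_s})=0$. These two factors are coprime generically (their difference $2y_{s,e_s}$ is a nonzero regular function in a polynomial ring), so they define two disjoint prime components. Each component is a polynomial ring in which $c_{2n}$ is eliminated by the corresponding linear substitution, hence an affine space, and the involution $c_{2n}\mapsto -c_{2n}$ swaps the two. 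This gives (2) together with the fact that $\mathfrak{d}^{\mathrm{aff}}_{\mathbf{d}}$ is disconnected iff $\mathbf{O}$ is very even; connectedness in the non-very-even case follows from the irreducibility established in the three cases above.

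The delicate point is the bookkeeping in the very-even case: one must verify that the starting sign $y_{1,0}=\pm 1$ is genuinely absorbed by the overall chain flip (so no spurious extra components arise) while the ending sign $c_{2n}=\pm y_{s,e_s}$ is not, producing exactly two components. This asymmetry traces back to the subring structure: $y_{1,0}^2-1$ already lies in $CQ_{\vec{e}}$ and the overall-flip involution of $A_{\vec{e}}$ identifies its two zero loci inside $\Spec CQ_{\vec{e}}$, while $c_{2n}\pm y_{s,e_s}$ involves the external Pfaffian variable $c_{2n}$ and admits no such identifying symmetry. Making this precise requires tracking the overall sign flip and the $c_{2n}\mapsto -c_{2n}$ involution through the ring inclusions $CQ_{\vec{e}}\subset V_{\vec{e}}\subset A_{\vec{e}}$ supplied by Lemma~\ref{lem:chained v2 is normalization}.
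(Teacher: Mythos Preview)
Your proof is correct and follows essentially the same approach as the paper's: both reduce the computation of $\mathfrak{d}^{\mathrm{aff}}_{\mathbf{d}}$ to Lemma~\ref{lem:chained v2 is normalization}, using the chain structure among consecutive D1* blocks and the special role of the Pfaffian coordinate $c_{2n}$ at the bottom of the chain. The paper's own proof is only two sentences and leaves the chain decomposition, the three boundary cases (internal, top, bottom), and the very-even sign bookkeeping implicit; you have written out exactly these details, so your version is a fleshed-out form of the same argument rather than a different one.
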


\begin{proof}
    By Lemma \ref{lem:chained v2 is normalization}, we see that if $\bf{O}$ is not very even, then $\mathfrak{d}^{\mathrm{aff}}_{\bf{d}}$ is isomorphic to product of an affine space and several affine Veronese varieties of degree 2. And if $\bf{O}$ is very even, the existence of $c_{2n}$ will provide another square root of $c_0$, since in the very even case, $c_0$ and $c_{2n}$ are ``chained together", thus we have two connected components and each is isomorphic to an affine space.
\end{proof}

\begin{proposition} \label{d o bar}
    We have $ \mathbf{H}_{\Obar} = \operatorname{ev}_x^{-1}(\mathfrak{d}_{\bf{d}})$ and $\mathbf{A}_{\Obar} \rightarrow \mathbf{H}_{\Obar}$ is the normalization map. 

\end{proposition}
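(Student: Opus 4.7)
The plan is to establish the two assertions in sequence.

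For $\mathbf{H}_{\Obar} = \operatorname{ev}_x^{-1}(\mathfrak{d}_{\bf{d}})$, I would first prove the inclusion $\mathbf{H}_{\Obar} \subseteq \operatorname{ev}_x^{-1}(\mathfrak{d}_{\bf{d}})$ by analyzing $\chi(\theta)$ on a formal disk around $x$. For $(E,\theta) \in \mathbf{Higgs}_{\Obar}^{\pm}$, Corollary~\ref{coro:decomposition over general field} together with the Kazhdan--Lusztig/Spaltenstein description (Proposition~\ref{prop:deg}) decomposes $\chi(\theta)$ in $\mathbb{C}[\![t]\!][\lambda]$ into irreducible factors grouped by blocks of $\bf{d}$. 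For a type D1* block $[\beta,\beta]$ the associated pair of factors $f,f'$ satisfies $f'(\lambda) = f(-\lambda)$ (Remark~\ref{rmk:dual to each other}), so their product is even in $\lambda$; by Lemma~\ref{coe of product} applied at the endpoint of each D1* block, the leading term of $a_{\sum_{j\le i} d_j}$ is a product of squared leading constants of the $f$'s, which is precisely the discriminant-vanishing condition for $p_i(\lambda)$ in \eqref{equation1}. When the D1* block is the last one, the Pfaffian $p_n$ supplies the missing square root as prescribed by \eqref{equation2}. Combining the contributions from all D1* blocks gives membership in $\mathfrak{d}_{\bf{d}}$. For the reverse inclusion I compare with the generic locus $\mathbf{H}^{\mathrm{KL}}$: Theorem~\ref{thm.O fiber} shows $h_{\Obar}^{\pm}$ has nonempty fibers over $\mathbf{H}^{\mathrm{KL}}$, which is open and dense in $\operatorname{ev}_x^{-1}(\mathfrak{d}_{\bf{d}})$; since both sides are closed in $\mathbf{H}$ and agree on this dense open, they coincide.

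For the normalization claim, Lemma~\ref{lem:chained v2 is normalization} identifies $\mathfrak{d}^{\mathrm{aff}}_{\bf{d}} \cong \operatorname{Spec}(CV_{\vec{e}})$ as the normalization of $\mathfrak{d}_{\bf{d}} \cong \operatorname{Spec}(CQ_{\vec{e}})$, with $CV_{\vec{e}}$ generated over $CQ_{\vec{e}}$ by pairwise products $y_i y_j$ of formal square roots. The task is to realize these generators as genuine regular functions on $\mathbf{Higgs}_{\Obar}^{\pm}$. Here I invoke Theorem~\ref{Thm. decomposition}: at $x$ the Higgs bundle splits into $\theta$-direct summands, one per D1* block, each carrying an induced nondegenerate pairing but only an $\operatorname{O}$-structure. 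The $P_{JM}$-reduction picks out a maximal isotropic subspace in each such summand, and Lemma~\ref{Volume form} converts this into a framing. The Pfaffian of $\theta$ on the local summand with respect to this framing is a ``new Pfaffian''; it is a priori defined only up to a sign (two components of the orthogonal Grassmannian give opposite framings), but its squares and pairwise products define well-defined regular functions on $\mathbf{Higgs}_{\Obar}^{\pm}$ realizing exactly the Veronese generators of $CV_{\vec{e}}$. Combined with generic connectedness of Hitchin fibers (Theorem~\ref{thm.O fiber}), the affinization map $\mathbf{A}_{\Obar} \to \mathbf{H}_{\Obar}$ is finite, birational, and its source is normal, so it is the normalization.

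The main obstacle I foresee is the globalization of the ``new Pfaffians'': ensuring the sign ambiguities cancel to produce regular sections globally, and verifying that the ring they generate is exactly $CV_{\vec{e}}$ (neither too small, failing normality, nor too large, failing birationality). The very-even case is also delicate, since by Proposition~\ref{prop:dd affine}(2) $\mathfrak{d}^{\mathrm{aff}}_{\bf{d}}$ has two components, which must be matched with the two moduli $\mathbf{Higgs}_{\Obar^{I}}^{\pm}$ and $\mathbf{Higgs}_{\Obar^{II}}^{\pm}$ via the sign-flip described in Lemma~\ref{Volume form}.
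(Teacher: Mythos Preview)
Your argument for the first assertion is essentially the paper's: the containment $\mathbf{H}_{\Obar} \subseteq \operatorname{ev}_x^{-1}(\mathfrak{d}_{\bf{d}})$ comes from the local factorization of $\chi(\theta)$ via Spaltenstein (the paper simply cites \cite[Propositions~4.7, 5.3]{BK18} and \cite[Proposition~6.4]{Spa88}), and equality follows from density over $\mathbf{H}^{\mathrm{KL}}$ plus properness of $h_{\Obar}^{\pm}$ (which is what makes the image closed---you should say this explicitly).

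For the normalization claim, however, you take a substantial detour that the paper avoids. The paper's proof is three lines: by Theorem~\ref{thm.O fiber} the generic fiber of $h_{\Obar}^{\pm}$ is smooth and connected, hence $(h_{\Obar}^{\pm})_*\mathcal{O}_{\mathbf{Higgs}_{\Obar}^{\pm}}$ is finite of generic rank~$1$ over $\mathbf{H}_{\Obar}$; since $\mathbf{Higgs}_{\Obar}^{\pm}$ is normal, its affinization is the normalization. Your final sentence is precisely this argument, and it is all that is needed. Everything you insert before it---decomposing into D1* summands, extracting maximal isotropics from the $P_{JM}$-reduction, building ``new Pfaffians'' via Lemma~\ref{Volume form}, and realizing the Veronese generators---is unnecessary for this Proposition. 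Normality of $\mathbf{A}_{\Obar}$ is inherited abstractly from the moduli; you do not need to exhibit explicit generators. That explicit construction is exactly what the paper does in Section~\ref{par Hitchin}, but for the \emph{parabolic} moduli $\mathbf{Higgs}_P^{\pm}$, where the generic fiber is genuinely disconnected (Proposition~\ref{generic fiber of hORbar}) and one must produce new regular functions to separate the $\deg\mu_P$ components. Here the fiber is already connected, so your worry about the ring being ``too small'' cannot arise.

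There is also a small inconsistency in your detour: you say the $P_{JM}$-reduction picks out a \emph{specific} maximal isotropic in each D1* summand, yet then claim the resulting Pfaffian is defined ``only up to a sign'' so that only squares and pairwise products are regular. If the framing is canonical, the Pfaffian itself is a well-defined rational function, and by normality would extend to a regular one---but that would force $\mathbf{A}_{\Obar}$ to be a nontrivial cover of $\mathbf{H}_{\Obar}$, contradicting the connectedness you invoke at the end. The resolution is simply that none of this machinery is needed here; drop it and keep only your last sentence.
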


\begin{proof}
    To prove that the image of $h_{\overline{\bf{O}}}$ is contained in $\operatorname{ev}_x^{-1}(\mathfrak{d}_{\bf{d}})$, we proceed similarly to the proofs of  \cite[Proposition 4.7, 5.3]{BK18}, where they considered the case of Richardson orbits. The argument generalizes to arbitrary nilpotent orbits due to \cite[Proposition 6.4 ]{Spa88}.

    To prove that the image of $h_{\overline{\bf{O}}}^{\pm}$ is equal to $\operatorname{ev}_x^{-1}(\mathfrak{d}_{\bf{d}})$, we use Theorem~\ref{thm.O fiber} to see that the image of $h_{\overline{\bf{O}}}^{\pm}$ is dense in $\operatorname{ev}_x^{-1}(\mathfrak{d}_{\bf{d}})$ and by the properness of $h_{\overline{\bf{O}}}^{\pm}$, we see that $\bf{H}_{\Obar}=\operatorname{ev}_x^{-1}(\mathfrak{d}_{\bf{d}})$.

    By Theorem~\ref{thm.O fiber}, the generic fiber of $h_{\Obar}^{\pm}$ is smooth and connected. In particular, $(h_{\bar{\bf{O}}}^{\pm})_*\sO_{\bf{Higgs}_{\bar{\bf{O}}}^{\pm}}$ is finite and of generic rank 1 over the Hitchin base. Since $\bf{Higgs}_{\bar{\bf{O}}}^{\pm}$ is normal, $\bf{A}_{\bar{\bf{O}}}$ is the normalization of $\bf{H}_{\bar{\bf{O}}}$.
\end{proof}
Let $\operatorname{ev}_x^{-1}(\mathfrak{d}^{\mathrm{aff}}_{\bf{d}})$ denote the fiber product of $\operatorname{ev}_x^{-1}(\mathfrak{d}_{\bf{d}})$ and $\mathfrak{d}^{\mathrm{aff}}_{\bf{d}}$. As a corollary (also of Lemma \ref{lem:chained v2 is normalization}), we have

\begin{corollary}    
    The \emph{Coulomb Hitchin base} $\bf{A}_{\Obar}$ is a connected component of $\operatorname{ev}_x^{-1}(\mathfrak{d}^{\mathrm{aff}}_{\bf{d}})$ and is isomorphic to product of an affine space with several affine Veronese variety of degree 2.
\end{corollary}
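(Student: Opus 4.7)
The plan is to assemble this corollary from Proposition \ref{d o bar}, Proposition \ref{prop:dd affine}, and a splitting of the evaluation map. First I would observe that the surjective linear map $\operatorname{ev}_x\colon \mathbf{H}\to \mathbb{C}^n$ is a surjection between affine spaces, so any choice of section yields an identification $\mathbf{H}\cong \ker(\operatorname{ev}_x)\times \mathbb{C}^n$, with $\ker(\operatorname{ev}_x)$ itself an affine space (consisting of sections whose relevant local coefficients vanish). Under this splitting,
\[
    \operatorname{ev}_x^{-1}(\mathfrak{d}_{\bf{d}})\cong \ker(\operatorname{ev}_x)\times \mathfrak{d}_{\bf{d}},\qquad \operatorname{ev}_x^{-1}(\mathfrak{d}^{\mathrm{aff}}_{\bf{d}})\cong \ker(\operatorname{ev}_x)\times \mathfrak{d}^{\mathrm{aff}}_{\bf{d}}.
\]

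Next I would verify that the natural map $\operatorname{ev}_x^{-1}(\mathfrak{d}^{\mathrm{aff}}_{\bf{d}})\to \operatorname{ev}_x^{-1}(\mathfrak{d}_{\bf{d}})$ is the normalization (up to taking a component). Since $\ker(\operatorname{ev}_x)$ is smooth, normalization commutes with multiplication by this factor, so the normalization of the right-hand side is $\ker(\operatorname{ev}_x)\times (\mathfrak{d}_{\bf{d}})^{\mathrm{nor}}$. By Definition \ref{def:d and d^aff} together with Lemma \ref{lem:chained v2 is normalization}, the scheme $\mathfrak{d}^{\mathrm{aff}}_{\bf{d}}$ is precisely this normalization (note that the ring of functions cut out by the square-root relations \eqref{equation1} and \eqref{equation2} is exactly the chained version of $Q_e$, whose normalization is the chained Veronese of degree $2$). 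Combined with Proposition \ref{d o bar}, which identifies $\mathbf{A}_{\Obar}\to \mathbf{H}_{\Obar}$ with the normalization map, this shows that $\mathbf{A}_{\Obar}$ is a connected component of $\operatorname{ev}_x^{-1}(\mathfrak{d}^{\mathrm{aff}}_{\bf{d}})$.

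For the explicit structure I would invoke Proposition \ref{prop:dd affine}. When $\bf{O}$ is not very even, $\mathfrak{d}^{\mathrm{aff}}_{\bf{d}}$ is already connected and is a product of an affine space with affine Veronese varieties of degree $2$, so $\mathbf{A}_{\Obar}\cong \ker(\operatorname{ev}_x)\times \mathfrak{d}^{\mathrm{aff}}_{\bf{d}}$ has the same form. When $\bf{O}$ is very even, $\mathfrak{d}^{\mathrm{aff}}_{\bf{d}}$ is a disjoint union of two isomorphic affine spaces, and $\mathbf{A}_{\Obar}$ picks out one of them, still giving an affine space.

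The main obstacle I anticipate is the matching of connected components in the very even case: I need to know that $\mathbf{A}_{\Obar}$ is connected, so that it lives over a single component of $\mathfrak{d}^{\mathrm{aff}}_{\bf{d}}$. This will follow from Theorem \ref{thm.O fiber}, which shows that generic fibers of $h_{\Obar}^{\pm}$ are connected (being torsors of abelian varieties); hence $\mathbf{Higgs}_{\Obar}^{\pm}$ is connected, and so is its affinization $\mathbf{A}_{\Obar}=\Spec\CC[\mathbf{Higgs}_{\Obar}^{\pm}]$. The two components of $\operatorname{ev}_x^{-1}(\mathfrak{d}^{\mathrm{aff}}_{\bf{d}})$ then correspond to the two choices $\mathbf{O}^I$ and $\mathbf{O}^{II}$.
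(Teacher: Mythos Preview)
Your proposal is correct and follows essentially the same route as the paper, which simply records the statement as an immediate corollary of Proposition \ref{d o bar} and Lemma \ref{lem:chained v2 is normalization}. One small redundancy: in your second paragraph you invoke Lemma \ref{lem:chained v2 is normalization} to argue that $\mathfrak{d}^{\mathrm{aff}}_{\bf{d}}$ is the normalization of $\mathfrak{d}_{\bf{d}}$, but this is already the content of Definition \ref{def:d and d^aff}; the lemma is only needed (via Proposition \ref{prop:dd affine}) to identify the resulting structure as a product of affine space with Veronese pieces.
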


\begin{proposition}
        The space $\operatorname{ev}_x^{-1}(\mathfrak{d}^{\mathrm{aff}}_{\bf{d}})$ is disconnected if and only if $\mathbf{O}$ is very even; in that case, it is a disjoint union of two isomorphic affine spaces corresponding to $\mathbf{O}^{I}$ and $ \mathbf{O}^{II}$. Moreover, $\bf{Higgs}_{\Obar^{I}}^{\pm}$ and $ \bf{Higgs}_{\Obar^{II}}^{\pm}$ are maped into different connected components of $\operatorname{ev}_x^{-1}(\mathfrak{d}^{\mathrm{aff}}_{\bf{d}})$.
\end{proposition}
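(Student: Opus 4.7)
The plan is to treat the two assertions separately. For the first, I would observe that $\operatorname{ev}_x: \mathbf{H} \to \mathbb{C}^n$ is a surjective linear map between affine spaces (it just extracts prescribed Taylor coefficients at $x$ of sections of line bundles on $\Sigma$), so for any subset $Z \subset \mathbb{C}^n$ the preimage $\operatorname{ev}_x^{-1}(Z)$ is a trivial affine-space bundle over $Z$, and the connected components of $\operatorname{ev}_x^{-1}(Z)$ are in bijection with those of $Z$. Applying this to $Z = \mathfrak{d}^{\mathrm{aff}}_{\mathbf{d}}$ and invoking Proposition \ref{prop:dd affine} yields both the equivalence with the very even condition and the fact that, in the very even case, each component is an affine space, with the two components isomorphic by pulling back the isomorphism between the two components of $\mathfrak{d}^{\mathrm{aff}}_{\mathbf{d}}$.

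For the second assertion, the strategy is to use the leading $t$-coefficient of the Pfaffian at $x$ as the distinguishing invariant. In the very even case every block $\mathbf{T}_i = [\beta_i, \beta_i]$ has type D1*, so the chain defining $\mathfrak{d}^{\mathrm{aff}}_{\mathbf{d}}$ (Definition \ref{defn:chained Veronese}) extends all the way from $y_{1,0} = 1$ through to a square root $y_{\ell,e_\ell}^2 = c_{2n}$, and the two connected components are interchanged by $y_{\ell,e_\ell} \mapsto -y_{\ell,e_\ell}$. By Lemma \ref{coe of product}, $y_{\ell,e_\ell}$ agrees, up to a fixed nonzero factor coming from the leading coefficients of the factors $f_j$, with the leading Taylor coefficient of the Pfaffian $p_n(\theta)$ at $x$. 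It therefore suffices to show that this leading coefficient has opposite signs on $\mathbf{Higgs}^{\pm}_{\overline{\mathbf{O}}^I}$ and on $\mathbf{Higgs}^{\pm}_{\overline{\mathbf{O}}^{II}}$.

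To do this I would, for a point of $\mathbf{Higgs}^{\pm}_{\overline{\mathbf{O}}^I}$, apply Theorem \ref{Thm. decomposition} and Corollary \ref{coro:decomposition over general field} on the formal disk at $x$ to decompose $E$ locally into $\theta$-direct summands indexed by the D1*-blocks. In each summand the $P_{JM}$-reduction at $x$ picks out a maximal isotropic subspace of the associated fiber, and by Lemma \ref{Volume form} this choice determines a volume form whose sign depends on which connected component of the orthogonal Grassmannian the maximal isotropic lies in. Taking the top exterior power of the local decomposition and comparing with the global framing $\alpha_E$ expresses the leading Taylor coefficient of $p_n(\theta)$ at $x$ as an explicit product of these local volume forms; swapping $\overline{\mathbf{O}}^I$ and $\overline{\mathbf{O}}^{II}$ by an element of $\operatorname{O}_{2n} \setminus \operatorname{SO}_{2n}$ flips exactly one local volume form and hence flips the overall sign, once more by Lemma \ref{Volume form}.

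The main obstacle will be this final sign-tracking step: one has to reconcile the normalizations used in Theorem \ref{Thm. decomposition} (where the local direct-sum decomposition is canonical only up to automorphism of each summand) with the normalization imposed by the global framing $\alpha_E$, so that the leading Taylor coefficient of $p_n$ at $x$ depends only on the orbit component $\overline{\mathbf{O}}^I$ versus $\overline{\mathbf{O}}^{II}$ and not on auxiliary choices. Once this identification is in hand, the rest of the argument is a direct combination of Lemma \ref{coe of product} (for the comparison with $y_{\ell,e_\ell}$) and Lemma \ref{Volume form} (for the sign flip under the outer automorphism), and the two moduli spaces are forced into different components of $\operatorname{ev}_x^{-1}(\mathfrak{d}^{\mathrm{aff}}_{\mathbf{d}})$.
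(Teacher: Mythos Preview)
Your treatment of the first assertion is correct and matches the paper: both reduce immediately to Proposition~\ref{prop:dd affine} via the observation that $\operatorname{ev}_x$ is a surjection of affine spaces.

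For the second assertion your route is more elaborate than the paper's. You propose to decompose $E$ on the formal disk into $\theta$-direct summands indexed by the D1*-blocks (Theorem~\ref{Thm. decomposition}), frame each summand via a maximal isotropic coming from the induced $P_{JM}$-reduction, and then reassemble a product of local Pfaffians to compare with the global framing $\alpha_E$. The paper instead makes a single, global observation: for a very even orbit every weight of the semisimple element $h$ on $E|_x$ is odd, so the step $F^1$ of the $P_{JM}$-filtration is already a maximal isotropic subspace of the \emph{entire} fiber $E|_x$. Lemma~\ref{Volume form} then produces a framing $\alpha_{F^1}$ on $\bigwedge^{2n}E|_x$ directly, and the regular function $\alpha_{F^1}/\alpha_E$ distinguishes the images. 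Since the $P_{JM}$-reductions for $\mathbf{O}^I$ and $\mathbf{O}^{II}$ are exchanged by an element of $\mathrm{O}_{2n}\setminus\SO_{2n}$, the corresponding $F^1$'s lie in different components of $\OG(n,E|_x)$, and Lemma~\ref{Volume form} gives the sign flip in one stroke.

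In short, the ``main obstacle'' you flag---reconciling the noncanonical local decomposition with the global framing---is entirely bypassed in the paper, because the relevant maximal isotropic already lives inside the $P_{JM}$-filtration of $E|_x$ and never needs to be broken into pieces. Your approach is closer in spirit to the ``new Pfaffian'' construction of Section~\ref{par Hitchin} (where one genuinely must decompose before framing), but for very even orbits that machinery is unnecessary. One further caution on your phrasing: the leading Taylor coefficient of the global Pfaffian $p_n(\theta)$ is $c_{2n}$, a function on $\mathbf{H}_{\Obar}$ itself, so it cannot by itself separate the two moduli; what separates them is the comparison of $c_{2n}$ with the framing-dependent square root $y_{\ell,e_\ell}$, which is precisely what the ratio $\alpha_{F^1}/\alpha_E$ encodes.
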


\begin{proof}
    When $\bf{O}^I$ and $\bf{O}^{II}$ are very even with the same partition $\bf{d}$, we have $$\bf{Higgs}_{\Obar^{I}}^{\pm}\bigsqcup \bf{Higgs}_{\Obar^{II}}^{\pm}\longrightarrow \operatorname{ev}_x^{-1}(\mathfrak{d}^{\mathrm{aff}}_{\bf{d}}).$$ We construct a regular function on $\bf{Higgs}_{\Obar^{I}}^{\pm}$ and $\bf{Higgs}_{\Obar^{II}}^{\pm}$ in a uniform way, then show that such regular function will distinguish the image of $\bf{Higgs}_{\Obar^{I}}^{\pm}$ and $\bf{Higgs}_{\Obar^{II}}^{\pm}$. By Proposition \ref{prop:dd affine}, $\operatorname{ev}_x^{-1}(\mathfrak{d}^{\mathrm{aff}}_{\bf{d}})$ is disconnected if and only if $\mathbf{O}$ is very even and it is a disjoint union of two isomorphic affine spaces.

    Use Lemma~\ref{Volume form}, we can give the regular function as follows. Since the orbits $\bf{O}^I$ and $\bf{O}^{II}$ are very even, the corresponding JM-filtration associated to $P_{JM}$ will have subspaces which are maximal isotropic, moreover, for $\bf{O}^I$ and $\bf{O}^{II}$, the corresponding maximal isotropic subspaces lie in different components of $\operatorname{OG}(n, E|_x)$. So by Lemma~\ref{Volume form}, these two maximal isotropic subspaces gives two different framing on $E|_x$, use such two framings, one can show that the image of $\bf{Higgs}_{\Obar^{I}}^{\pm}$ and $\bf{Higgs}_{\Obar^{II}}^{\pm}$ are disjoint in $\operatorname{ev}_x^{-1}(\mathfrak{d}^{\mathrm{aff}}_{\bf{d}})$. To conclude, $\bf{A}_{\Obar^I}$ and $\bf{A}_{\Obar^{II}}$ equal different connected components of $\operatorname{ev}_x^{-1}(\mathfrak{d}^{\mathrm{aff}}_{\bf{d}})$.
\end{proof}

As a consequence, we can give a criterion of the smoothness of $\bf{A}_{\Obar}$.

\begin{corollary}\label{smoothness}
    The space $\mathbf{A}_{\Obar}$ is singular if and only if there exists a type D1* block $\mathbf{T}_i$ such that, for some indices $i_0 < i < i_1$, both $\mathbf{T}_{i_0}$ and $\mathbf{T}_{i_1}$ are not of type D1*. If $\mathbf{A}_{\Obar}$ is smooth, then it is isomorphic to an affine space.

    Similarly, the space $\mathbf{H}_{\Obar}$ is singular if and only if there exists a type D1* block in the partition. If $\mathbf{H}_{\Obar}$ is smooth, then it is isomorphic to an affine space.
\end{corollary}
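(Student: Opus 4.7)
The plan is to reduce smoothness of $\mathbf{A}_{\Obar}$ and $\mathbf{H}_{\Obar}$ to that of the finite-dimensional ``local models'' $\mathfrak{d}^{\mathrm{aff}}_{\bf{d}}$ and $\mathfrak{d}_{\bf{d}}$, by exploiting the fact that $\operatorname{ev}_x : \mathbf{H} \to \mathbb{C}^n$ is a smooth surjection together with the identifications $\mathbf{H}_{\Obar} = \operatorname{ev}_x^{-1}(\mathfrak{d}_{\bf{d}})$ and $\mathbf{A}_{\Obar}$ being a connected component of $\operatorname{ev}_x^{-1}(\mathfrak{d}^{\mathrm{aff}}_{\bf{d}})$ from Proposition~\ref{d o bar}. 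The finite-dimensional question is then handled block by block via the product decomposition already established in Proposition~\ref{prop:dd affine}.

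First I would observe that each maximal run of consecutive D1* blocks in the partition $\mathbf{d}$ contributes a single chained factor, to $\mathfrak{d}_{\bf{d}}$ in the un-normalized case and to $\mathfrak{d}^{\mathrm{aff}}_{\bf{d}}$ in the normalized case. In the normalized setting this factor is either the affine chained Veronese variety $\operatorname{Spec} CV_{\vec{e}}$, when the run has non-D1* blocks on both sides, or else the polynomial ring $A_{\vec{e}}$ by Lemma~\ref{lem:chained v2 is normalization}(2), after one of the boundary square-root generators $y_{11}$ or $y_{i e_i}$ is adjoined. The generator $y_{11}$ is available exactly when the run is adjacent to the beginning of the partition, where $c_0 := 1$ forces $y_{11}^2 = 1$; the generator $y_{i e_i}$ is available when the run is adjacent to the end, where the Pfaffian term $c_{2n}^2$ in \eqref{equation2} plays the analogous role. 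An ``interior'' run admits neither boundary adjunction, so its contribution is the genuine Veronese cone, singular at the vertex. This establishes the criterion for $\mathbf{A}_{\Obar}$, and when the criterion fails, every factor is polynomial, so $\mathbf{A}_{\Obar}$ is an affine space.

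For $\mathbf{H}_{\Obar}$ I would run the same argument with the un-normalized chained quadric $\operatorname{Spec} CQ_{\vec{e}}$ in place of $\operatorname{Spec} CV_{\vec{e}}$. Since even the simplest factor $\operatorname{Spec} Q_1 = \operatorname{Spec}(\mathbb{C}[c_0,c_1,c_2]/(c_1^2 - 4 c_0 c_2))$ is a singular quadric cone, every chained quadric arising from a nontrivial D1* run is singular, and $\mathfrak{d}_{\bf{d}}$ is therefore smooth if and only if no D1* block appears in $\mathbf{d}$; in that case $\mathfrak{d}_{\bf{d}}$ is a point and $\mathbf{H}_{\Obar} = \mathbf{H}$ is affine.

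The main technical obstacle I anticipate is the careful bookkeeping of the boundary adjunctions, especially at the end of the partition: I would need to verify that the Pfaffian contribution $c_{2n}^2$ in \eqref{equation2} genuinely plays the role of an already adjoined generator $y_{i e_i}$ in Lemma~\ref{lem:chained v2 is normalization}(2), rather than a mere formal substitution, and that this is fully compatible with the connected-component picture of $\mathbf{A}_{\Obar}$ from the preceding proposition, so that the entire Coulomb base (not just $\mathfrak{d}^{\mathrm{aff}}_{\bf{d}}$) inherits the affine-space structure in the smooth case.
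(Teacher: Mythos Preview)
The paper states this corollary without proof, so your proposal is being compared against the implicit argument encoded in Proposition~\ref{prop:dd affine}, Lemma~\ref{lem:chained v2 is normalization}, and Proposition~\ref{d o bar}. For $\mathbf{A}_{\Obar}$ your plan is exactly the intended one: the product description gives affine-space factors for D1* runs touching the beginning (via $c_0=1$, i.e.\ $y_{11}$ adjoined) or the end (via the Pfaffian, i.e.\ $y_{ie_i}$ adjoined), and genuine degree-$2$ Veronese cones for interior runs; the latter are singular at the vertex, so the criterion follows. One small slip: when there are no D1* blocks, $\mathfrak{d}_{\bf d}$ is all of $\mathbb{C}^n$, not a point.

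For $\mathbf{H}_{\Obar}$ there is a genuine gap in your reasoning. You assert that every D1* run contributes a singular $\operatorname{Spec} CQ_{\vec{e}}$ factor because $Q_1$ is a quadric cone, but you have not applied the boundary bookkeeping you set up for $\mathbf{A}_{\Obar}$. When the run contains $\mathbf{T}_1$, the leading coefficient of $p_1(\lambda)$ in \eqref{equation1} is $c_0=1$, so the relevant factor is not $\operatorname{Spec} Q_e$ but its \emph{slice} $\{c_0=1\}$, and that slice is smooth: with $y_0=\pm 1$ one solves $y_1,y_2,\ldots$ successively as polynomials in $c_1,c_2,\ldots$, so the locus is a graph. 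Concretely, for $\mathbf{d}=[2,2,1,1]$ the only relation is $c_2^2=4c_4$, which cuts out a smooth parabola in $(c_2,c_4,c_6)$, yet a D1* block is present. By contrast, a run touching only the \emph{end} is still singular (the Pfaffian substitution $c_{2e}\mapsto w^2$ gives $c_1^2=4c_0w^2$, again a cone), and a run touching both ends is reducible. So the dichotomy for $\mathbf{H}_{\Obar}$ is more delicate than ``D1* present or not'': you must separately treat the initial-run case, and either refine the argument or reconcile it with the exact phrasing of the corollary.
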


\section{Parabolic Hitchin Systems: Richardson Orbits}\label{par Hitchin}

In this section, we study the parabolic Higgs moduli space where we insert the data of $T^*(G/P) \cong G \times_P \mathfrak{n}$ at the marked point. We denote this moduli space by $\mathbf{Higgs}_P$. As before, $\bf{Higgs}_P$ has two connected components and we use $\bf{Higgs}_P^{\pm}$ to denote each one component. Let the corresponding Richardson orbit be $\mathbf{O}_R$, and denote the generalized Springer map $T^*(G/P) \rightarrow \Obar_R$ by $\mu_P$, as before.

The Hitchin map for $\mathbf{Higgs}_P^{\pm}$ naturally takes values in $\mathbf{H}_{\Obar_R}$, which, as we have seen in the previous section, may be singular. However, we will show that in this case, the Hitchin map actually factors through a finite cover of $\mathbf{H}_{\Obar_R}$. This finite cover, denoted by $\mathbf{A}_P$, is the \emph{Coulomb Hitchin base}
\[
    \mathbf{A}_P := \mathrm{Spec}\left( \mathbb{C}\left[\mathbf{Higgs}_P^{\pm}\right] \right),
\]
and we will prove that it is isomorphic to an affine space.


\begin{definition}
\[
    \mathbf{Higgs}_P = \left\{ (E, \theta, E_P) \ \left| \
    \begin{aligned}
        & E \ \text{is a principal $G$-bundle}, \\
        & \theta \in H^0(\Sigma, \Ad(E) \otimes \omega_{\Sigma}(x)), \\
        & E_P \ \text{is a $P$-reduction of} \ E \ \text{at} \ x, \\
        & \Res_{x} \theta \in E_P \times_{P, \Ad} \mathfrak{n}
    \end{aligned}
    \right.\right\}.
\]
\end{definition}

\begin{remark}\label{rem:two P}
    Let $P$ be a parabolic subgroup of $\SO_{2n}$ with Levi type $(p_1, \ldots, p_k; q)$, i.e., we have an admissible filtration $\Fil^{\bullet}_{P}$:
    \begin{align*}
        \mathbb{C}^{2n} = F_0 \supset F_1 \supset F_2 \supset \ldots \supset F_{m} \supset F_{m}^\perp \supset \ldots \supset F_{1}^\perp \supset F_{0}^\perp = 0,
    \end{align*}
    such that 
    \begin{align*}
        \dim F_m/F_m^\perp = q, \quad \dim F_{i-1}/F_i = p_i \quad \text{for} \quad i=1, \ldots, m.  
    \end{align*}
    As mentioned before, if $q = 0$, there exist two non-conjugate parabolic subgroups $P$ and $P'$ whose associated filtrations have the same dimension data as above. Note that since $q = 0$, we have $F_k = (F_m)^\perp$, so $F_m$ is a maximal isotropic subspace of $\mathbb{C}^{2n}$.

    In the proof of Theorem~\ref{thm:Tac's conj}, we will show how to distinguish $\mathbf{Higgs}_P^{\pm}$ from $\mathbf{Higgs}_{P'}^{\pm}$.
\end{remark}

As in the case of $\mathbf{Higgs}_{\Obar_R}^{\pm}$, we have a natural morphism
\[
    h_P^{\pm} : \mathbf{Higgs}_P^{\pm} \longrightarrow \mathbf{H}_{\Obar_R}.
\]
We begin by describing the generic fiber of $h_P$.

\begin{proposition} \label{generic fiber of hORbar}
    For $a \in \mathbf{H}_{\Obar_R}^{\mathrm{KL}}$, there exists a natural morphism
    \[
        (h_P^{\pm})^{-1}(a) \longrightarrow (h_{\Obar_R}^{\pm})^{-1}(a)
    \]
    of degree $\deg \mu_P$. Moreover, $(h_P^{\pm})^{-1}(a)$ have $\deg \mu_P$ many connected components and each connected component maps isomorphically onto $(h_{\Obar_R}^{\pm})^{-1}(a)$.
\end{proposition}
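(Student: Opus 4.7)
The plan is to construct the natural morphism $(h_P^\pm)^{-1}(a) \to (h_{\Obar_R}^\pm)^{-1}(a)$ globally on moduli, then analyze its finite fibers using Proposition \ref{prop.deg_mu_P}, and finally show the cover is trivial using the spectral description. To define the morphism, I send a parabolic Higgs bundle $(E, \theta, E_P)$ to the residually nilpotent Higgs bundle $(E, \theta, E_{P_{JM}})$, where $E_{P_{JM}}$ is the canonical $P_{JM}$-reduction attached to the nilpotent element $\mu_P(\Res_x \theta) \in \Obar_R$ inside the fiber $\Ad(E)|_x$. Since the Jacobson--Morozov parabolic associated to a nilpotent element $e$ depends only on $e$ (the $\mathfrak{sl}_2$-triple is unique up to the centralizer action), this recovery is canonical and compatible with the Hitchin map.

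For the degree, I use that for $a \in \bf{H}^{\mathrm{KL}}$ and any $(E, \theta, E_{P_{JM}}) \in (h_{\Obar_R}^\pm)^{-1}(a)$, the residue $\Res_x \theta$ is generic in $\Obar_R$ by Definition \ref{Def:H^KL} combined with Theorem \ref{Thm. decomposition}. Proposition \ref{prop.deg_mu_P} then identifies the Springer fiber $\mu_P^{-1}(\Res_x \theta)$ with $\prod_{2i-1 \in I(P)} \OG(1, V_i)$, which is a reduced zero-dimensional scheme of length $\deg \mu_P$. Each point corresponds to a valid $P$-reduction at $x$, and étaleness of the map is then a standard smoothness check (both sides are smooth over $\bf{H}^{\mathrm{KL}}$ with the expected local model).

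The main obstacle will be showing that this finite étale cover is trivial, rather than being a connected $\deg \mu_P$-fold cover. My plan here is to exploit the spectral description from Theorem \ref{Thm. decomposition} and Proposition \ref{Prop: factor through}. Each 2-dimensional space $V_i$ in Proposition \ref{prop.deg_mu_P} is spanned by Jordan basis vectors $v\!\left(\tfrac{d_{2i-1}+1}{2}, 2i-1\right)$ and $v\!\left(\tfrac{d_{2i}+1}{2}, 2i\right)$; under the decomposition of Theorem \ref{Thm. decomposition} and the spectral correspondence $E \cong \bar{\pi}_{a*}\sL$, these two vectors correspond to the fibers of $\sL$ at the two distinguished preimages $x_{2i-1}, x_{2i} \in \bar{\Sigma}_a$ of $x$, which are globally defined distinct points on the normalized spectral curve. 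Consequently, choosing an isotropic line in $V_i$ is tautologically the same as choosing one of the two labeled points $\{x_{2i-1}, x_{2i}\}$, and this labeling persists as $\sL$ varies over the Prym torsor $(h_{\Obar_R}^\pm)^{-1}(a)$. Hence the monodromy of the cover is trivial, and the cover splits as $\prod_{2i-1 \in I(P)} \{\mathrm{2\ points}\}$, yielding $\deg \mu_P$ connected components each mapping isomorphically onto $(h_{\Obar_R}^\pm)^{-1}(a)$.

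Finally, I will handle the $q = 0$ case, where two non-conjugate parabolics $P$ and $P'$ share the same Levi type. The relevant $\OG$-factor at the $\SO_{2q}$-block has two components in $\OG(n, \mathbb{C}^{2n})$, which are distributed between $\bf{Higgs}_P^\pm$ and $\bf{Higgs}_{P'}^\pm$ via Lemma \ref{Volume form}; this is precisely the factor of $\tfrac{1}{2}$ appearing in the formula $\deg \mu_P = 2^{\#I(P)-1}$, so the component count $\deg \mu_P$ is correct for each of $\bf{Higgs}_P^\pm$ and $\bf{Higgs}_{P'}^\pm$ individually.
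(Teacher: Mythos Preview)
Your overall strategy matches the paper's: construct the forgetful map $(E,\theta,E_P)\mapsto (E,\theta)$, identify its fiber with the Springer fiber via Proposition~\ref{prop.deg_mu_P}, and then analyze connected components. The first two steps are fine. The gap is in your triviality argument for the resulting $(\Z/2\Z)^{\#I(P)}$-cover.

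You claim that the two isotropic lines in $V_i$ are ``tautologically'' labeled by the spectral points $x_{2i-1},x_{2i}$, because the Jordan basis vectors $v\bigl(\tfrac{d_{2i-1}+1}{2},2i-1\bigr)$ and $v\bigl(\tfrac{d_{2i}+1}{2},2i\bigr)$ correspond to fibers of $\sL$ at those points. But for $2i-1\in I(P)$ the block $\mathbf{T}_i$ is of type D2, so $d_{2i-1},d_{2i}$ are both odd and $\tau$ fixes both indices; hence the form on $V_i$ is \emph{diagonal} in the Jordan basis and each basis vector is \emph{anisotropic}. The isotropic lines are genuine linear combinations, not the coordinate axes. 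Your labeling argument would, if correct, trivialize the cover already over $\Prym_a$, whereas in the paper's language the double cover is $\Prym_{x_{2i-1}-x_{2i},a}=\OG\bigl(\cO\oplus\cP_{x_{2i-1}}\otimes\cP_{x_{2i}}^{-1}\bigr)$, which is \emph{connected} over $\Prym_a$ because $\cP_{x_{2i-1}}\otimes\cP_{x_{2i}}^{-1}\in\Prym_a^{\vee}[2]$ is generically nontrivial. (Note also that for D2 blocks one does not have $E\cong\bar\pi_{a*}\sL$ on the nose; rather $\bar\pi_{a*}\sL$ is a finite-index submodule $E'\subset E$, and the extra data---the $\iota$-isotropic subspace---is exactly what $\Prym_{\Obar_R,a}$ records.)

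The paper's proof supplies the missing mechanism: it identifies $(h_P^{\pm})^{-1}(a)$ as a torsor over $\Prym_{\Obar_R,a}\times_{\Prym_a}\prod_{2i-1\in I(P)}\Prym_{x_{2i-1}-x_{2i},a}$ via \cite[Propositions~5.35,~5.38]{WWW24}, and then observes that each factor $\Prym_{\Obar_R,a}\times_{\Prym_a}\Prym_{x_{2i-1}-x_{2i},a}$ has two connected components precisely because the isogeny $\Prym_{\Obar_R,a}\to\Prym_a$ (built from the covers $\Prym_{W_j,a}$ attached to D2 blocks) kills the $2$-torsion class $\cP_{x_{2i-1}}\otimes\cP_{x_{2i}}^{-1}$. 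So the disconnection happens only \emph{after} passing to the finite cover $\Prym_{\Obar_R,a}$, not over $\Prym_a$ itself; this is the point your argument elides.
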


\begin{proof}
    Consider $(\mathcal{E},\mathcal{E}_{P_{JM}}, \theta) \in (h_{\Obar_R}^{\pm})^{-1}(a)$ for $a \in \textbf{H}^{\text{KL}}$, we have $\Res_x(\theta) \in \bf{O}_{R}$. By the definition of Jacobson--Morozov resolution, we identify
    \[
        (h_{\Obar_R}^{\pm})^{-1}(a) = \{(E, \theta) \mid \chi(\theta) = a \}.
    \]
    The natural map
    \[
        (E, (E|_x)_{P}, \theta) \mapsto (E, \theta)
    \]
    relates $(h_{P}^{\pm})^{-1}(a)$ and $(h_{\Obar_R}^{\pm})^{-1}(a)$. From Proposition~\ref{prop.deg_mu_P}, the fibers are governed by \eqref{Spal_fib}.

    From Theorem~\ref{thm.O fiber}, the generic Hitchin fiber $(h_{\Obar_R}^{\pm})^{-1}(a)$ is a torsor over $\Prym_{\Obar_R,a}$, a connected component of
    \[
         \prod_{\Prym_a}^{\bf{T}_i\; \text{of type D2}} \Prym_{W_i,a}.
    \]
    Following arguments similar to those in \cite[Propositions 5.35, 5.38]{WWW24}, we conclude that $h^{-1}_{P}(a)$ is a torsor over
    \[
        \Prym_{\Obar_R,a} \times_{\Prym_a} \prod_{2i-1 \in I_P} \Prym_{x_{2i-1} - x_{2i}, a},
    \]
    where each $\Prym_{x_{2i-1} - x_{2i}, a}$ is defined as
    \[
        \Prym_{x_{2i-1} - x_{2i}, a} := \OG \left( \cO \oplus \cP_{x_{2i-1}} \otimes \cP_{x_{2i}}^{-1} \right).
    \]

    Finally, by the definition of $I(P)$, if $2i - 1 \in I(P)$, then $d_{2i-1}$ belongs to some block $\mathbf{T}_j$ of type D2. In this case, the fiber product
    \[
        \Prym_{\Obar_R,a} \times_{\Prym_a} \Prym_{x_{2i-1} - x_{2i}, a}
    \]
    has two connected components. Therefore, we conclude.
\end{proof}

In general, the Stein factorization theorem implies that the map $h_P^{\pm}$ factors through a finite cover of $\mathbf{H}_{\Obar_R}$. In the remainder of this section, we prove the following theorem, which was conjectured by Tachikawa~\cite{Tac}.

\begin{theorem}\label{thm:Tac's conj}
    The Hitchin map $h_{P}^{\pm}$ factors through the Coulomb Hitchin base:
    \begin{align*}
    \xymatrix{
        \bf{Higgs}^\pm_{P} \ar[dr]_{h_{P}^\pm} \ar[rr]^{h_{P,\rm{aff}}^\pm} &    & \bf{A}_{P} \ar[dl]^{f_P} \\
        & \bf{H_{\Obar_R}} &
    },
    \end{align*}
    such that 
    \begin{itemize}
        \item[1.] $\bf{A}_P$ is a connected component of $\operatorname{ev}^{-1}(\mathfrak{d}_P)$ (see Definition~\ref{def:d_P});
        \item[2.] $\bf{A}_P$ is isormorphic to an affine space;
        \item[3.] For any $a\in \bf{A}_P^{\KL}:=f_P^{-1}(\bf{H}^{\KL})$, $(h_{P,\rm{aff}}^\pm)^{-1}(a)$ is a torsor of self-dual abelian variety $\Prym_{\Obar_R, a}$.
    \end{itemize}
\end{theorem}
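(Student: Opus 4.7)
The plan is to construct the ``new Pfaffian'' regular functions on $\mathbf{Higgs}_P^{\pm}$ alluded to in the introduction, identify the Coulomb Hitchin base $\mathbf{A}_P$ as the connected component of the affine space obtained by adjoining their square roots in the manner of Lemma~\ref{lem:chained v2 is normalization}, and then read off the generic fiber description from Proposition~\ref{generic fiber of hORbar} together with Theorem~\ref{thm.O fiber}.

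To define the new Pfaffians, I would restrict a parabolic Higgs bundle $(E,\theta,E_P)\in\mathbf{Higgs}_P^{\pm}$ to the formal disk at $x$ and apply Corollary~\ref{coro:decomposition over general field} to obtain a canonical $\theta$-invariant decomposition indexed by the blocks $\mathbf{T}_j$. On a summand associated to a block $\mathbf{T}_j$ of type D1$^*$ the induced structure is only orthogonal, so no intrinsic Pfaffian exists; however, the proof of Proposition~\ref{prop.deg_mu_P} shows that the admissible filtration coming from $E_P$ cuts out a maximal isotropic subspace inside those summands indexed by $2j-1\in I(P)$. Lemma~\ref{Volume form} upgrades this maximal isotropic to a framing of the top exterior power of the fiber, and with that framing the Pfaffian of the local Higgs field on the summand becomes a well-defined scalar $\mathrm{pf}_j$. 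Spreading out from the formal disk and using normality of $\mathbf{Higgs}_P^{\pm}$ (together with the forthcoming monic relation $\mathrm{pf}_j^2=c_{*}$ in the next paragraph), I obtain a global regular function $\mathrm{pf}_j$ on $\mathbf{Higgs}_P^{\pm}$ for each $j$ with $2j-1\in I(P)$.

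The relation $\mathrm{pf}_j^2=$ (the appropriate leading coefficient of the $\mathbf{T}_j$-block of $\chi(\theta)$), which follows from Lemma~\ref{coe of product} applied to the local factorization, then shows that the augmented map $(h_P^{\pm},\{\mathrm{pf}_j\})$ factors through the subscheme $\operatorname{ev}^{-1}(\mathfrak{d}_P)$ obtained from $\operatorname{ev}^{-1}(\mathfrak{d}_{\mathbf{d}})$ by adjoining exactly the square roots indexed by $I(P)$; this is Definition~\ref{def:d_P}. The chained Veronese analysis of Lemma~\ref{lem:chained v2 is normalization}(2) now applies: by Lemma~\ref{lem:stru of Ti}, the set of square roots we adjoin is precisely what is needed to polynomialize each chain in the defining ring of $\mathfrak{d}_{\mathbf{d}}$, so $\operatorname{ev}^{-1}(\mathfrak{d}_P)$ becomes an affine space, or (in the very-even / $q=0$ case of Remark~\ref{rem:two P}) a disjoint union of two isomorphic affine spaces, which are distinguished as $\mathbf{Higgs}_P^{\pm}$ versus $\mathbf{Higgs}_{P'}^{\pm}$ by the sign of the top framing, in direct analogy with the separation of $\mathbf{Higgs}_{\Obar^{I}}$ and $\mathbf{Higgs}_{\Obar^{II}}$. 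Defining $\mathbf{A}_P$ to be the connected component of $\operatorname{ev}^{-1}(\mathfrak{d}_P)$ hit by $\mathbf{Higgs}_P^{\pm}$ proves claims (1) and (2). For the identification with the affinization, Proposition~\ref{generic fiber of hORbar} says that $(h_P^{\pm})^{-1}(a)$ has $\deg\mu_P$ connected components, each mapping isomorphically to $(h_{\Obar_R}^{\pm})^{-1}(a)$; the signs of the $\mathrm{pf}_j$ match these components with the $2^{\#I(P)}$ (resp.\ $2^{\#I(P)-1}$) sheets of $\operatorname{ev}^{-1}(\mathfrak{d}_P)\to\operatorname{ev}^{-1}(\mathfrak{d}_{\mathbf{d}})$ coming from Proposition~\ref{prop.deg_mu_P} and the structure of \eqref{Spal_fib}, so the generic $h_{P,\mathrm{aff}}^{\pm}$-fiber is connected. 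Normality of $\mathbf{Higgs}_P^{\pm}$ then forces $(h_{P,\mathrm{aff}}^{\pm})_{*}\mathcal{O}=\mathcal{O}_{\mathbf{A}_P}$, so $\mathbf{A}_P=\operatorname{Spec}\CC[\mathbf{Higgs}_P^{\pm}]$. Claim (3) is immediate from Theorem~\ref{thm.O fiber} since Richardson orbits are always special, so $\Prym_{\Obar_R,a}$ is self-dual and the generic fiber is a torsor over it.

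The main obstacle I expect is ensuring that the functions $\mathrm{pf}_j$, which are naturally defined only on the KL-generic locus (where Theorem~\ref{Thm. decomposition} supplies the direct-sum decomposition), extend to globally regular functions on the whole of $\mathbf{Higgs}_P^{\pm}$. I would handle this by noting that the candidate $\mathrm{pf}_j$ satisfies the monic equation $\mathrm{pf}_j^2-c_{*}=0$ whose constant term $c_{*}$ is a coefficient of $\chi(\theta)$ and hence already regular on the moduli space; normality of $\mathbf{Higgs}_P^{\pm}$ then forces integrality to give a regular extension. A secondary subtlety is fixing the sign ambiguity of $\mathrm{pf}_j$ consistently across the moduli space; this should be pinned down by specifying an orientation on the maximal isotropic compatible with the connected component labels $\pm$.
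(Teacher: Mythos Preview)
Your overall architecture matches the paper's, but there is a genuine gap in the key step where you extend the new Pfaffians to regular functions. You assert that $\mathrm{pf}_j^2$ equals ``the appropriate leading coefficient of the $\mathbf{T}_j$-block of $\chi(\theta)$'' and then invoke the monic relation $\mathrm{pf}_j^2-c_*=0$ to conclude regularity by normality. But the local block factor $T_j(\lambda)$ is \emph{not} itself a coefficient of $\chi(\theta)$: Lemma~\ref{coe of product} only identifies leading terms of \emph{partial products} of the constant terms with the leading terms of the $c_*$'s. What one actually obtains is
\[
\mathrm{pf}_j^{\,2}\;=\;\frac{c_{\scriptscriptstyle{\sum_{s\le 2j}d_s}}}{c_{\scriptscriptstyle{\sum_{s\le 2j-2}d_s}}}\quad\text{(leading terms)},
\]
a ratio of two coefficients, not a single one. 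Thus there is no monic integral equation for $\mathrm{pf}_j$ over the coefficient ring, and your normality argument for extending $\mathrm{pf}_j$ itself collapses. In fact the paper never claims the $\mathrm{pf}_j$ are regular; they remain rational.

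The paper repairs this with a downward induction that you are missing: the global Pfaffian supplies $c_{2n}$ (in the paper's convention this is already a square root) as a genuinely regular function. Then for $2j-1\in I(P)$ one writes $\sqrt{c_{\scriptscriptstyle{\sum_{s\le 2j-2}d_s}}}=\sqrt{c_{\scriptscriptstyle{\sum_{s\le 2j}d_s}}}\,/\,\mathrm{pf}_j$, which is a priori only rational; but \emph{this} quantity satisfies the monic equation $X^2-c_{\scriptscriptstyle{\sum_{s\le 2j-2}d_s}}=0$ with regular constant term, so normality forces it to be regular. Iterating through the four block-adjacency cases in the paper (and using Lemma~\ref{lem:stru of Ti} to guarantee the chain of indices in $I(P)$ is unbroken down to the smallest one) yields exactly the square roots listed in Definition~\ref{def:d_P}. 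Note also that $\mathfrak{d}_P$ is disconnected precisely when $\mathbf{T}_1$ is of type D1* or $1\in I(P)$ (Proposition~\ref{no relation and degree counting}(2)), which is not the same as the ``very even / $q=0$'' dichotomy you cite; the paper treats the separation of the two non-conjugate parabolics via the maximal isotropic in the $P$-filtration in those cases.
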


Let $\bar{S}$ be a scheme of finite type over the field $\mathbb{C}$ and assume that we have a family of parabolic $\SO_{2n}$-Higgs bundle $(\mathcal{E}_{\bar{S}}, \theta_{\bar{S}})$ on $\bar{S}\times \Sigma$, flat over $\bar{S}$. Then we have an induced morphism $\bar{S}\rightarrow \bf{H}_{\Obar_R}$. Let $S \subset \bar{S}$ be the preimage of $\bf{H}_{\Obar}^{\KL}$ and then write the base change of $(\mathcal{E}_{\bar{S}}, \theta_{\bar{S}})$ to be $(\mathcal{E}_{S}, \theta_{S})$. We choose a local coordinate $t$ around $x$ and then base change the family $(\mathcal{E}_S, \theta_S)$ to $S\times \mathcal{O}$ to get $(\tilde{\mathcal{E}_S}, \tilde{\theta_S})$. Then by 2.4 of Lemma \ref{lem:stru of Ti}, Lemma \ref{lem:factorization over non-closed field} and Corollary \ref{coro:decomposition over general field}, there is an open subvariety $S^\circ\subset S$, such that over $S^\circ$ and for each $2j-1\in I(P)$, $\Ker T_{j}(\tilde{\theta_{S^\circ}})$ is a $\tilde{\theta_{S^\circ}}$ direct summand of $\tilde{\mathcal{E}_{S^\circ}}$, where $T_{j}(\lambda)$ is a factor of $\chi(\tilde{\theta_{S^\circ}})$ defined in the proof of Theorem \ref{Thm. decomposition}. 

Thus the filtration on $\sE_{\bar{S}}$ associated to the parabolic subgroup $P$ induces a filtration on $\Ker T_{j}(\tilde{\theta_{S^\circ}})$ for each $2j-1\in I(P)$.

\begin{proposition}\label{prop:maximal isotropic}
    The induced filtration on $\Ker T_{j}(\tilde{\theta_{S^\circ}})$ for each $2j-1\in I(P)$ contains a maximal isotropic subbundle.
\end{proposition}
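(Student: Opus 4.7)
The plan is to reduce the statement to a fiberwise analysis and then apply the explicit description of admissible filtrations from the proof of Proposition~\ref{prop.deg_mu_P}. We identify the step $F_m^\perp$ of the $P$-filtration as the one that restricts to the desired maximal isotropic subbundle.

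First, for each closed point $s \in S^\circ$, Corollary~\ref{coro:decomposition over general field} yields an orthogonal $\bar\theta_s$-invariant decomposition $\mathcal{E}|_{(x,s)} = \bigoplus_i K_i(s)$, where $K_i(s) := \ker T_i(\bar\theta_s)$ carries an induced nondegenerate pairing and $\bar\theta_s|_{K_i(s)}$ has Jordan type $\mathbf{T}_i$. The $P$-reduction at $(x,s)$ produces an admissible filtration $F_\bullet(s)$, and, as shown in the proof of Proposition~\ref{prop.deg_mu_P}, this filtration is determined by
\[
F_m^\perp(s) \;=\; E_0(s) \oplus \bigoplus_{2i-1 \in I(P)} W_i(s),
\]
where $W_i(s) \subset V_i(s)$ is a maximal isotropic line in the two-dimensional subspace $V_i(s) \subset K_i(s)$ attached to the block $\mathbf{T}_i$.

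Second, because each Jordan basis vector $v(i,j)$ lies in a single block $K_k(s)$ (the one whose indices include the column $j$), the subspace $E_0(s)$ respects the decomposition $\bigoplus_i K_i(s)$. Combined with $V_j(s) \subset K_j(s)$ and $V_i(s) \subset K_i(s)$ for $i \neq j$, this gives
\[
F_m^\perp(s) \cap K_j(s) \;=\; \bigl(E_0(s) \cap K_j(s)\bigr) \oplus W_j(s).
\]
This subspace is automatically isotropic, since $F_m^\perp(s) \subset (F_m^\perp(s))^\perp$ and the pairing on $K_j(s)$ is the restriction of the ambient one. For maximality, note that $\dim K_j(s) = \sum_{d_k \in \mathbf{T}_j} d_k = 2n_j$, so a maximal isotropic subspace of $K_j(s)$ has dimension $n_j$. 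The explicit description of $E_0$ in the proof of Proposition~\ref{prop.deg_mu_P} (as the direct sum of $\mathbb{C}v(m+1-i,\cdot)$ for $p_i \notin I(P)$, together with a codimension-one piece of the $p_i$-column for $p_i \in I(P)$) combined with condition 2.4 of Lemma~\ref{lem:stru of Ti} (which rules out cross-block interference between the Jordan vectors in $E_0$ and those supporting a different $K_k$) yields $\dim(E_0(s) \cap K_j(s)) = n_j - 1$; adding $\dim W_j(s) = 1$ gives exactly $n_j$.

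Finally, to upgrade the fiberwise statement to a subbundle, we use that on $S^\circ$ the decomposition $\tilde{\mathcal{E}}_{S^\circ} = \bigoplus_i \ker T_i(\tilde\theta_{S^\circ})$ is a decomposition of locally free sheaves and the $P$-reduction provides a flat family of filtrations $F_\bullet$. Hence $F_m^\perp \cap \ker T_j(\tilde\theta_{S^\circ})|_{x \times S^\circ}$ is a coherent subsheaf whose fiber dimension is constantly $n_j$; it is therefore a subbundle, whose fibers are maximal isotropic.

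The main obstacle is the dimension count: it requires bookkeeping of which Jordan vectors belong to $E_0 \cap K_j(s)$, and leveraging condition~2.4 of Lemma~\ref{lem:stru of Ti} to confirm that blocks $\mathbf{T}_k$ with $k \neq j$ do not contribute spurious intersections. Once this combinatorial check is in place, isotropy and the promotion to a subbundle are formal consequences of flatness on $S^\circ$.
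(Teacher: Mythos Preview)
Your proposal is correct and follows essentially the same approach as the paper's proof: both identify the step $F_m^\perp$ of the $P$-filtration, restrict it to $\Ker T_j$, and invoke the explicit description from the proof of Proposition~\ref{prop.deg_mu_P} to see that this restriction is maximal isotropic. The paper's version is terser---it simply writes down the formula
\[
\tilde{F_m^\perp} = \bigoplus_{1 \leq i \leq (d_{2j-1}-1)/2} \mathbb{C}\,v(i,2j-1) \oplus \bigoplus_{1 \leq i \leq (d_{2j}-1)/2} \mathbb{C}\,v(i,2j) \oplus V_j
\]
and declares it maximal isotropic---whereas you carry out the dimension count and the subbundle promotion in more detail. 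One caveat the paper makes explicit and you leave implicit: the induced pairing on $\Ker T_j(\tilde\theta_{S^\circ})$ need not coincide with the Jordan-basis pairing used in Proposition~\ref{prop.deg_mu_P}, but the argument (which only uses the shape of $F_m^\perp$ relative to the nilpotent action and the isotropy of $F_m^\perp$ in the ambient form) transfers verbatim.
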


\begin{proof}
    This follows from Proposition~\ref{prop.deg_mu_P}.
    Indeed, by Corollary \ref{coro:decomposition over general field}, there is an induced nondegenerate pairing on $\Ker T_{j}(\tilde{\theta_{S^\circ}})$. Although, the pairing on $\Ker T_j(\widetilde{\theta}_{S_0})$ is different from the pairing for Jardan basis in Proposition~\ref{prop.deg_mu_P}, the proof can be applied here.

    For $2j-1 \in I(P)$, the subspace
    \[
        \bigoplus\limits_{1 \leq i \leq d_{2j-1}} \CC v \left(i, 2j-1 \right)  \oplus \bigoplus\limits_{1 \leq i \leq d_{2j}} \CC v(i ,2j) 
    \]
    stands for $\Ker T_j(\widetilde{\theta_{S_0}})$. Let $\tilde{\Fil_P^\bullet}$ be the induced filtration on $V$. From the proof of Proposition~\ref{prop.deg_mu_P}, we know
    \[
        \tilde{F_m^\perp} = \bigoplus\limits_{1 \leq i \leq \tfrac{d_{2j-1}-1}{2}} \CC v \left(i, 2j-1 \right)  \oplus \bigoplus\limits_{1 \leq i \leq \tfrac{d_{2j}-1}{2}} \CC v(i ,2j) \oplus V_i
    \]
    is the maximal isotropic subspace. Here, $V_i$ is defined above Proposition~\ref{prop.deg_mu_P}.
\end{proof}

Now we want to construct some new rational functions on $\bar{S}$. We first consider the restriction of the pairing $g$ to each $\Ker T_{j}(\tilde{\theta_{S^\circ}})$ for $2j-1\in I(P)$ to get $g_j$. Notice that $g_j$ takes values in $\mathcal{O}_{S^\circ}\subset \mathcal{O}_{S^\circ}\otimes \mathbb{C}[\![t]\!]$. But this only gives an $\operatorname{O_{d_{2j-1}+d_{2j}}}$ structure on $\Ker T_{j}(\tilde{\theta_{S^\circ}})$ since we do not have a framing on $\Ker T_{j}(\tilde{\theta_{S^\circ}})$. However, by Proposition \ref{prop:maximal isotropic} above, we have natural maximal isotropic sunbbundle of $\Ker T_{j}(\tilde{\theta_{S^\circ}})$, which gives an $\operatorname{SO_{d_{2j-1}+d_{2j}}}$ structure on $\mathcal{E}_{S^\circ}$ by Lemma~\ref{Volume form}.

Now we can consider $\operatorname{pf}(g_j\theta_j)$ for each $2j-1\in I(P)$, which is well-defined only when we consider the first nonzero term in terms of $t$ by Lemma \ref{Volume form}. These are regular functions on $S^\circ$, hence rational functions on $\bar{S}$. Thus we have defined rational functions on the moduli space $\bf{Higgs}_P^{\pm}$, which we still denote as $\operatorname{pf}(g_j\theta_j)$.

\begin{definition} \label{def:d_P}
Let $\mathfrak{d}_{\mathbf{d}}$ be the base associated with the partition $\mathbf{d}$ of the Richardson orbit $\mathbf{O}_R$ (see Definition~\ref{def:d and d^aff}), and let $I(P)$ be the index set determined by the polarization $P$.

We define a finite cover $\mathfrak{d}_P$ of $\mathfrak{d}^{\mathrm{aff}}_{\mathbf{d}}$ by adjoining certain square roots to the coordinate ring of $\mathfrak{d}_{\mathbf{d}}$ as follows:
\begin{itemize}
    \item For any $2j - 1 \in I(P)$ with $j > 1$, adjoin the square root $\sqrt{c_{\scriptscriptstyle{\sum_{k \leq 2j - 2} d_k}}}$.
    \item If $1 \in I(P)$, adjoin the square root $\sqrt{c_{\scriptscriptstyle{d_1 + d_2}}}$.
    \item For each polynomial $p_i(\lambda)$ appearing in \eqref{equation1} or \eqref{equation2}, if either the leading coefficient or the constant term of a square root of $p_i(\lambda)$ has already been adjoined, then adjoin all coefficients of that square root, collectively denoted by $\{y_i\}$.
\end{itemize}
The normalization of the affine variety defined by this enlarged function ring is denoted by $\mathfrak{d}_P$.
\end{definition}

\begin{example}
    Here we continue with Example~\ref{example}. For the orbit $\bf{O}$ with partition $\bf{d}=[d_1,d_2,d_3,d_4,d_5,d_6]=[4,4,3,3,2,2]$, it is Richardson and there is a unique polarization $P$ so that $I(P)=\{3\}$ for $d_3$. By our construct above, to define $\mathfrak{d}_P$, we firstly considered the square root $\sqrt{c_{8}}$ since $3\in I(P)$. Then we consider another square root $\sqrt{c_{8}}$ since $p_{1}(\lambda)$ has leading coefficient $1$. Hence we have two square root of $c_8$, which are differed by a sign. Thus, $\mathfrak{d}_P$ is isomorphic to a disjoint union of two isomorphic affine spaces. 

    Meanwhile, to see why we consider the square root of $c_8$, the construction of $\operatorname{pf}(g_2\theta_2)$ above imples that $\overline{h_2}$ is a rational function on the moduli space, which means $\sqrt{c_{8}}=\sqrt{c_{14}}/\overline{h_2}$ is a rational function on the moduli space. Here we use the fact that $\sqrt{c_{14}}$ is a regular function on the moduli space.  Since $c_8$ is regular, we see that $\sqrt{c_8}$ is also regular due to the normality of the moudli space.
\end{example}

\begin{proposition}\label{no relation and degree counting}
    \quad
    \begin{itemize}
        \item [(1)] $\mathfrak{d}_P$ is nonsingular and each connected components is isomorphic to an affine space.
        \item [(2)] $\mathfrak{d}_P$ is disconnected if and only if $\bf{T}_1$ is of type D1* or $1\in I(P)$. Moreover, in this case, $\mathfrak{d}_P$ has exactly two connected components.
        \item [(3)] The degree of a connected component of $\mathfrak{d}_P$ to $\mathfrak{d}_{\bf{d}}$ is $\deg \mu_P$.
    \end{itemize}
\end{proposition}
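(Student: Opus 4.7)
The plan is to combine the block decomposition of $\mathfrak{d}^{\mathrm{aff}}_{\mathbf{d}}$ from Proposition~\ref{prop:dd affine} with Lemma~\ref{lem:chained v2 is normalization}. By Proposition~\ref{prop:dd affine}, $\mathfrak{d}^{\mathrm{aff}}_{\mathbf{d}}$ decomposes as a product of an affine space and several affine chained Veronese varieties $\Spec CV_{\vec{e}_\alpha}$, each corresponding to a maximal consecutive run of D1* blocks in $\mathbf{d}$. The polynomials $p_i(\lambda)$ in \eqref{equation1}--\eqref{equation2} associated with blocks in one run glue into the chained Veronese relation for that run.

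First, I would trace, for each $2j-1\in I(P)$, the position of the adjoined coefficient $c_{\sum_{k\le 2j-2}d_k}$ within this decomposition. Using Lemma~\ref{lem:stru of Ti} (parts 2.1--2.4), I would verify that when $j>1$, this coefficient is either the constant term of some $p_i(\lambda)$ (precisely when $\mathbf{T}_{j-1}$ ends a D1* run starting at $\mathbf{T}_i$) or a free coordinate in the affine factor; similarly $c_{d_1+d_2}$ in the exceptional case $1\in I(P)$ is either the leading coefficient of $p_2(\lambda)$ (when $\mathbf{T}_2$ begins a D1* run) or a free coordinate. The cascade rule in Definition~\ref{def:d_P} then forces all remaining coefficients $\{y_i\}$ of $\sqrt{p_i(\lambda)}$ for each affected chain to be adjoined. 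Part (1) will then follow from Lemma~\ref{lem:chained v2 is normalization}(2): adjoining an endpoint coefficient together with the cascade promotes the chained Veronese factor to the polynomial ring $A_{\vec{e}_\alpha}$, and for free-coordinate adjunctions, $\mathbb{C}[c,\sqrt{c}]=\mathbb{C}[\sqrt{c}]$ is again a polynomial ring; hence $\mathfrak{d}_P$ is smooth with each connected component isomorphic to an affine space.

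For part (2), the key observation is that disconnection arises precisely when an adjoined $\sqrt{c}$ coincides with a square root already present in the base ring, yielding two distinct square roots of the same element. Two such scenarios, shown to be mutually exclusive, are: (a) $\mathbf{T}_1$ is D1*, in which case the cascade triggered by a later $2j-1\in I(P)$ pulls in $y_{1,0}$ with $y_{1,0}^2=c_0=1$, giving $y_{1,0}=\pm 1$; and (b) $1\in I(P)$, where the independent adjunction $\sqrt{c_{d_1+d_2}}$ collides with a cascade-adjoined square root (from a subsequent D1* run starting at $\mathbf{T}_2$) of the same element. A case-by-case analysis based on Lemma~\ref{lem:stru of Ti} will show these are the only mechanisms and that each yields exactly two components. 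For part (3), the total degree of $\mathfrak{d}_P \to \mathfrak{d}_{\mathbf{d}}$ equals $2^{\#I(P)}$ by the cardinality of primary square-root adjunctions; consequently a component has degree $2^{\#I(P)}$ in the connected case and $2^{\#I(P)-1}$ in the disconnected case, matching Proposition~\ref{prop.deg_mu_P} once we verify that the connection/disconnection dichotomy of (2) coincides with the Levi-type dichotomy $q\ge 4$ vs.\ $q=0$ -- a check performed directly using \eqref{Levi type} and the D-collapse rule.

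The main obstacle will be the combinatorial matching in the second paragraph -- identifying each $c_{\sum_{k\le 2j-2}d_k}$ with the correct endpoint of a specific D1* chain, and confirming that the cascade of Definition~\ref{def:d_P} does reach all the $y_i$'s required to invoke Lemma~\ref{lem:chained v2 is normalization}(2). Almost equally delicate will be proving the equivalence between the disconnection criterion ``$\mathbf{T}_1$ is D1* or $1\in I(P)$'' and the Lie-theoretic condition $q=0$, which ties a partition-theoretic invariant of $\mathbf{d}$ to a structural property of the polarization $P$ via the induced partition $\mathbf{d}_P$.
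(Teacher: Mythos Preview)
Your proposal is correct and follows essentially the same route as the paper: both arguments reduce part~(1) to Proposition~\ref{prop:dd affine} combined with Lemma~\ref{lem:chained v2 is normalization}(2), handle part~(2) via the cascade mechanism of Definition~\ref{def:d_P} together with the structural constraints of Lemma~\ref{lem:stru of Ti}, and obtain part~(3) by counting the primary square-root adjunctions and comparing with Proposition~\ref{prop.deg_mu_P}.

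There is one minor difference worth flagging. For part~(3) you assert uniformly that the total degree of $\mathfrak{d}_P\to\mathfrak{d}_{\mathbf{d}}$ is $2^{\#I(P)}$, and then propose to match the disconnection criterion of~(2) with the Levi dichotomy $q\ge 4$ versus $q=0$. The paper instead treats the very even case separately (where it records the total degree as~$2$) and then invokes the component description and Proposition~\ref{prop.deg_mu_P} without spelling out the equivalence ``$\mathbf{T}_1$ is D1* or $1\in I(P)$'' $\Leftrightarrow$ $q=0$. Your proposed verification of this equivalence via \eqref{Levi type} and the D-collapse rule is a legitimate (and arguably cleaner) way to close the argument; it is straightforward since $q=0$ forces all $d'_j$ even, whence either $d'_1>d'_2$ gives $1\in I(P)$, or $d'_1=d'_2$ survives collapse to make $\mathbf{T}_1$ of type D1*, while $q\ge 4$ forces $d'_1$ odd so neither alternative can occur. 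Also, in your case~(b) for part~(2) you only mention the sub-case where $\mathbf{T}_2$ begins a D1* run; the paper (and your promised case-by-case analysis) must also cover the sub-case where $\mathbf{T}_2$ is not D1*, in which case Lemma~\ref{lem:stru of Ti}(2.4) forces $3\in I(P)$ and the collision is between the two primary adjunctions $\sqrt{c_{d_1+d_2}}$.
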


\begin{proof}
    Recall that by Propsition \ref{prop:dd affine}, $\mathfrak{d}^{\mathrm{aff}}_{\mathbf{d}}$ is isomorphic to a product of affine space and several affine Veronese varieties of degree 2. Then by Lemma \ref{lem:chained v2 is normalization}, (1) follows.

    Assume that $\bf{T}_1$ is of type D1*, then by Lemma~\ref{lem:stru of Ti}, for every $\bf{T}_j$ which is not of type D1*, we have $2j-1\in I(P)$. Notice that $\bf{T}_1$ will give a polynomial $p_1(\lambda)$ as in \eqref{equation1} or \eqref{equation2}, and the leading coefficient of $p_1(\lambda)$ is $1$, hence by our construction, the square root of the constant term of $p_1(\lambda)$ is a regular function of $\mathfrak{d}_P$. We assume that $\bf{T}_1=\cdots =\bf{T}_{i_1}\neq \bf{T}_{i_1+1}$. If $\bf{T}_{i_1+1}$ is not of type D1*, then by our construction, the square root of the constant term of $p_1(\lambda)$ is added to the definition of $\mathfrak{d}_P$. If $\bf{T}_{i_1+1}$ is of type D1*, then we proceed by induction, which also shows that the square root of the constant term of $p_1(\lambda)$ is added. Both cases provided an alternative square root of the constant term of $p_1(\lambda)$, hence $\mathfrak{d}_P$ has two connected components.

    The other case is that $1\in I(P)$ and by our construction, we know that one square root of $c_{\scriptscriptstyle{d_1+d_2}}$ is added. By Lemma \ref{lem:stru of Ti} we know that $\bf{T}_1\neq \bf{T}_2$. If $\bf{T}_2$ is also not of type D1*, then by our construction, another square root of $c_{\scriptscriptstyle{d_1+d_2}}$ is added. If $\bf{T}_2$ is of type D1*, then we also proceed by induction as above, thus another square root of $c_{\scriptscriptstyle{d_1+d_2}}$ is added. Hence in this case, $\mathfrak{d}_P$ also has two connected components.

    Besides these relations happen at $\bf{T}_1$, it is easy to see that there are no other relations in the definition of $\mathfrak{d}_P$. Hence the description of $\mathfrak{d}_P$ follows as claimed.

    The degree of $\mathfrak{d}_P\rightarrow \mathfrak{d}_{\bf{d}}$ is computed as follows. Notice that for each $2j-1\in I(P)$, the function $\sqrt{c_{\scriptscriptstyle{\sum_{k\leq 2j-2}d_k}}}$ ($\sqrt{c_{\scriptscriptstyle{d_1+d_2}}}$ in the case $j=1$) defines a double cover of $\mathfrak{d}_{\bf{d}}$. Now by Lemma~\ref{lem:stru of Ti}, for every $\bf{T}_{i_0}$ of type D1* such that 
    \[
    \mathbf{T}_{i_0-1} \neq \mathbf{T}_{i_0} = \cdots = \mathbf{T}_{i_0+k_i} \neq \mathbf{T}_{i_0+k_i+1}.
    \]
    If $i_0+k_i=l$, then we see that the functions in $\{y_i\}$ defined by \eqref{equation2} only defines a generically one-to-to cover of $\mathfrak{d}_{\bf{d}}$. If $\mathbf{T}_{i_0+k_i+1}$ is not of type D1*, then by Lemma~\ref{lem:stru of Ti}, we see that $2(i_0+k_i+1)-1\in I(P)$ and hence $\sqrt{c_{\scriptscriptstyle{\sum_{k\leq 2(i_0+k_i+1)-2}d_k}}}$ is considered in the definition of $\mathfrak{d}_P$. Notice that $c_{\scriptscriptstyle{\sum_{k\leq 2(i_0+k_i+1)-2}d_k}}$ is the constant term in \eqref{equation1}, hence the functions in $\{y_i\}$ defined by \eqref{equation1} only defines a generically one-to-to cover of $\mathfrak{d}_{\bf{d}}$ considering that $\sqrt{c_{\scriptscriptstyle{\sum_{k\leq 2(i_0+k_i+1)-2}d_k}}}$ already defines a double cover. And if $\mathbf{T}_{i_0+k_i+1}$ is of type D1*, then we proceed by induction, hence all the functions in $\{y_i\}$ only defines a generically one-to-to cover of $\mathfrak{d}_{\bf{d}}$, considering that the functions $\sqrt{c_{\scriptscriptstyle{\sum_{k\leq 2j-2}d_k}}}$ already defines a cover of $\mathfrak{d}_{\bf{d}}$.

    In summary, the map $\mathfrak{d}_P\rightarrow \mathfrak{d}_{\bf{d}}$ has degree $2^{\#I(P)}$ if $\bf{O}$ is not very even and has degree $2$ if $\bf{O}$ is very even. Due to the description of connected components of $\mathfrak{d}_P$ and Proposition~\ref{prop.deg_mu_P}, we conclude. 
\end{proof}

\begin{proof}[Proof of Theorem~\ref{thm:Tac's conj}]    
    By a similar argument as in Proposition \ref{d o bar}, we have the morphism $\bf{Higgs}_P^{\pm}\rightarrow \mathfrak{d}_{\bf{d}}$, we regard $c_{2i}$ as regular functions on $\bf{Higgs}_P^{\pm}$. Now we want to show that the morphism $\bf{Higgs}_P^{\pm}\rightarrow \mathfrak{d}_{\bf{d}}$ factors through $\mathfrak{d}_P$, which mean that we need to show that all $\{y_i\}$ and $\{\sqrt{c_{\scriptscriptstyle{\sum_{k\leq 2j-2}d_k}}}\}$ in the definition of $\mathfrak{d}_P$ are regular functions on $\bf{Higgs}_P^{\pm}$. We consider the following cases.

    The first case is that $\bf{T}_i$ and $\bf{T}_{i+1}$ are both of type D1* and $\bf{T}_i\neq \bf{T}_{i+1}$. This case has been discussed in the proof of Proposition \ref{d o bar}.
    
    The second case is that $\bf{T}_i$ is of type D1* while $\bf{T}_{i+1}$ is not. Then consider the polynomial $p_i(\lambda)$ defined by $\bf{T}_i$ as in \eqref{equation1}, the constant term of $p_i(\lambda)$ is given by $c_{\scriptscriptstyle{\sum_{s\leq 2i}d_s}}$. Then by Lemma \ref{lem:chained v2 is normalization}, if $\sqrt{c_{\scriptscriptstyle{\sum_{s\leq 2i+2}d_s}}}$ is a regular function on $\bf{Higgs}_P^{\pm}$ and $\sqrt{c_{\scriptscriptstyle{\sum_{s\leq 2i+2}d_s}}/c_{\scriptscriptstyle{\sum_{s\leq 2i}d_s}}}$ is rational functions on $\bf{Higgs}_P^{\pm}$, then the coefficients of square root of $p_i(\lambda)$ are regular functions on $\bf{Higgs}_P^{\pm}$.

    The third case is that $\bf{T}_i$ is not of type D1* while $\bf{T}_{i+1}$ is. Then $\bf{T}_{i+1}$ provides a polynomial $p_{i+1}(\lambda)$ as in \eqref{equation1} or \eqref{equation2}. The leading coefficient of $p_{i+1}(\lambda)$ is $c_{\scriptscriptstyle{\sum_{s\leq 2i}d_s}}$. If the coefficients of square root of $p_{i+1}(\lambda)$ are regular functions on $\bf{Higgs}_P^{\pm}$ and $\sqrt{c_{\scriptscriptstyle{\sum_{s\leq 2i}d_s}}/c_{\scriptscriptstyle{\sum_{s\leq 2i-2}d_s}}}$ is rational functions on $\bf{Higgs}_P^{\pm}$, then $\sqrt{c_{\scriptscriptstyle{\sum_{s\leq 2i-2}d_s}}}$ is a regular function on $\bf{Higgs}_P^{\pm}$.

    The last case is that $\bf{T}_i$ and $\bf{T}_{i+1}$ are both not of type D1*. Now if $\sqrt{c_{\scriptscriptstyle{\sum_{s\leq 2i+2}d_s}}}$ is a regular function on $\bf{Higgs}_P^{\pm}$ and $\sqrt{c_{\scriptscriptstyle{\sum_{s\leq 2i+2}d_s}}/c_{\scriptscriptstyle{\sum_{s\leq 2i}d_s}}}$ is rational functions on $\bf{Higgs}_P^{\pm}$, then $\sqrt{c_{\scriptscriptstyle{\sum_{s\leq 2i}d_s}}}$ is a regular function on $\bf{Higgs}_P^{\pm}$.

    If we have $2i-1\in I(P)$, by Lemma \ref{coe of product}, the rational functions $\operatorname{pf}(g_i\theta_i)$ we constructed before are exactly $\sqrt{c_{\scriptscriptstyle{\sum_{s\leq 2i}d_s}}/c_{\scriptscriptstyle{\sum_{s\leq 2i-2}d_s}}}$. Notice that $c_{2n}$ is a regular function on $\bf{Higgs}_P^{\pm}$, then we induct from $l$ to the smallest $j$ such that $2j-1\in I(P)$, by Lemma~\ref{lem:stru of Ti}, we see that all $\{y_i\}$ and $\{\sqrt{c_{\scriptscriptstyle{\sum_{k\leq 2j-2}d_k}}}\}$ in the definition of $\mathfrak{d}_P$ are regular functions on $\bf{Higgs}_P^{\pm}$ then the morphism $\bf{Higgs}_P^{\pm}\rightarrow \mathfrak{d}_{\bf{d}}$ factors through $\mathfrak{d}_P$. Hence the morphism $\bf{Higgs}_P^{\pm}\rightarrow \bf{H}_{\Obar_R}$ factors through $\operatorname{ev}^{-1}(\mathfrak{d}_P)$.

    For two non-conjugate parabolic subgroups with the same Levi type, we need to show that the corresponding moduli spaces map to different components of $\operatorname{ev}^{-1}(\mathfrak{d}_P)$. The case that $\mathfrak{d}_P$ having two connected components only happens when $\bf{T}_1$ is of type D1* or $1\in I(P)$. In both cases, the filtration associted to $P$ contains a maximal isotropic subspace, see Remark~\ref{rem:two P}. Thus, similar to the proof of Proposition~\ref{d o bar}, we see that $\bf{Higgs}_P^{\pm}$ maps to a connected component of $\operatorname{ev}^{-1}(\mathfrak{d}_P)$, which we denote as $\bf{A}_P$.

    Since the degree of $\bf{A}_P\rightarrow \bf{H}_{\Obar_R}$ is equal to $\deg \mu_P$, which is also the number of connected component of $(h_P^{\pm})^{-1}(a)$ for $a\in \bf{H}_{\Obar_R}^{\KL}$ by Proposition~\ref{generic fiber of hORbar}, we see that the generic fiber of $h_P: \bf{Higgs}_P^{\pm}\rightarrow \bf{A}_P$ is connected, hence a torsor of $\Prym_{\Obar_R, a}$. 

    Notice that by Proposition~\ref{no relation and degree counting}, $\bf{A}_P$ is isomorphic to an affine space, hence $\bf{A}_P$ is isormorphic to the affinization $\mathrm{Spec}\left(\CC\left[\bf{Higgs}_P^{\pm}\right]\right)$ by the nomality of $\bf{Higgs}_P^{\pm}$ and the connectness of the generic fiber of $h_P: \bf{Higgs}_P^{\pm}\rightarrow \bf{A}_P$.
 \end{proof}

\bibliographystyle{alpha}
\bibliography{ref}
\end{document}